\numberwithin{equation}{section}
\theoremstyle{plain}
\newtheorem{theorem}{Theorem}[section]
\newtheorem{proposition}[theorem]{Proposition}
\newtheorem{lemma}[theorem]{Lemma}
\newtheorem{corollary}[theorem]{Corollary}
\newtheorem{remark}[theorem]{Remark}
\newtheorem{hypo}[theorem]{Hypothesis}
\newtheorem{definition}[theorem]{Definition}
\renewcommand{\Re}{\mathrm{Re}\,}
\providecommand{\vect}[2]{{\bigl[\begin{smallmatrix}#1\\#2\end{smallmatrix}\bigr]}}   
\providecommand{\sm}[4]{{\bigl[\begin{smallmatrix}#1&#2\\#3&#4\end{smallmatrix}\bigr]}}
\newcommand{\x}{\xi}
\newcommand{\pa}{\partial}
    \newcommand{\set}[1]{{\left\{#1\right\}}}
\newcommand{\norm}[1]{{\left |#1\right |}}
\newcommand{\T}{\mathbb{T}}
\newcommand{\Z}{\mathbb{Z}}
\newcommand{\R}{\mathbb{R}}
\newcommand{\C}{\mathbb{C}}
\renewcommand{\Re}{\operatorname{Re}}
\newcommand{\co}[1]{\textit{#1}}
\newcommand{\gr}[1]{\textbf{#1}}
\newcommand{\s}{{\sigma}}
\newcommand{\ii}{{\rm i}}
\def\norma#1{\|#1\|}
\newcommand{\ov}{\overline}
\newcommand{\N}{{\mathbb N}}
\newcommand{\cF}{{\mathcal F}}
\newcommand{\cQ}{{\mathcal Q}}
\newcommand{\tC}{{\mathtt{C}}}
\newcommand{\0}{{(0)}}
\newcommand{\e}{{\varepsilon}}
\newcommand{\uno}{{\mathbb I}}
\newcommand{\nnorm}[1]{{\left\vert\kern-0.25ex\left\vert\kern-0.25ex\left\vert #1 
    \right\vert\kern-0.25ex\right\vert\kern-0.25ex\right\vert}}
\newcommand{\jjap}[1]{\lfloor #1 \rfloor}
\definecolor{aqua}{RGB}{10,150,200}
\def\l@subsection{\@tocline{2}{0pt}{2.5pc}{5pc}{}}
\def\l@subsubsection{\@tocline{3}{0pt}{4.5pc}{5pc}{}}
\begin{document}

\title[On the lifespan and control of high norms for NLS on tori]{On the lifespan of solutions 
and control of High Sobolev norms
for the completely resonant NLS on tori}

\date{}

\author{Roberto Feola}
\address{\scriptsize{Dipartimento di Matematica e Fisica, Universit\`a degli Studi RomaTre, 
Largo San Leonardo Murialdo 1, 00144}}
\email{roberto.feola@uniroma3.it}

\author{Jessica Elisa Massetti}
\address{\scriptsize{Dipartimento di Matematica e Fisica, Universit\`a degli Studi RomaTre, 
Largo San Leonardo Murialdo 1, 00144}}
\email{jessicaelisa.massetti@uniroma3.it}

 
\keywords{Nonlinear Schr\"odinger equations; Long-time existence; Para-differential calculus; Energy estimates.}

\subjclass[2010]{35Q55, 35S50, 35A01}

\begin{abstract}    
We consider a completely resonant nonlinear Schr\"odinger equation 
on the $d$-dimensional torus, for any $d\geq 1$, 
with  polynomial nonlinearity of any degree $2p+1$, $p\geq1$, which is 
 gauge and translation invariant. We study the behaviour of 
 \emph{high} Sobolev $H^{s}$-norms of solutions, $s\geq s_1+1 > d/2 + 2$, 
 whose initial datum  $u_0\in H^{s}$ satisfies an appropriate smallness condition 
 on its \emph{low} $H^{s_1}$ and $L^2$-norms respectively.   
 We prove a polynomial  upper bound on the possible growth of the Sobolev norm
 $H^{s}$ over finite but long  time scale that is exponential in the regularity parameter $s_1$. 
 As a byproduct we get stability of the low  $H^{s_1}$-norm over such time interval.
 A key ingredient in the proof is  the introduction of a suitable ``modified energy" 
 that provides an a priori upper bound on the growth. 
 This is obtained by combining para-differential techniques 
 and suitable tame estimates.   
\end{abstract}

\maketitle 
\setcounter{tocdepth}{2}
\tableofcontents

\section{Introduction and main results}
 
Let $\T^d :=(\mathbb{R}/2\pi \mathbb{Z})^{d}$, $d\ge 1$. 
In this paper we consider  the defocusing/focusing 
Schr\"odinger equation on the $d$-dimensional torus
\begin{equation}\label{QLNLS}
{\rm i} u_{t}-\Delta u\pm |u|^{2p}u=0 \,,\;\;\; u=u(t,x) \quad \quad x\in \mathbb{T}^{d}
 \end{equation}
 where  $p\geq1$, $t\in\mathbb{R}$,
 and $\Delta$ denotes the Laplacian operator acting 
 on the Fourier basis of periodic functions by linearity as
 \[
 \Delta e^{{\rm i} j \cdot x}=-|j|^{2}e^{{\rm i} j\cdot x}\,, 
 \qquad 
 |j| := \|j\|_{\ell_2}\,,\quad \forall\, j\in \mathbb{Z}^{d}\,.
 \]
For any $s\in\R$,
 we consider  the standard Sobolev space $H^{s}(\mathbb{T}^{d};\C)$
endowed with the norm $$\|\cdot\|_{H^{s}}:=\|\cdot\|_{L^2}+\|(\sqrt{-\Delta})^s\cdot\|_{L^{2}},$$
where $(\sqrt{-\Delta})^s$ is  defined spectrally as 
$(\sqrt{-\Delta})^s e^{{\rm i} j\cdot x} = |j|^s e^{{\rm i} j\cdot x}$ for any $j\in\Z^d$.

\noindent
Given  $u_0\in H^{s}(\mathbb{T}^{d};\C)$
let $u(t)$ be the solution of the Cauchy problem  associated to \eqref{QLNLS}
 with initial condition $u(0)=u_0$.
 Our aim  is to discuss the life span, and possibly the behaviour for long time, of $u(t)$. 
 The Cauchy problem  has been widely investigated in the literature from many
 different points of view and has been handled 
with various techniques, depending 
 on the manifold  where the space variable $x$ lives, its dimension,
 and the kind of nonlinearity involved.
 
 \noindent
The general local well-posedness theory in Sobolev spaces 
(i.e. the study of existence of solutions and regularity of the flow, at least on a small interval of time)
is quite well established, see for instance \cite{LinaresPonce}
and reference therein for a wide discussion 
on this topic. 
Moreover, regarding Schr\"odinger type equations,  
local existence in time is known also in the case of 
very general \emph{quasi-linear} nonlinearities. 
We mention for instance \cite{KVP, MMT3}
and reference therein for the equation posed on
the  \emph{Euclidean space} ($x\in\mathbb{R}^{d}$, $d\geq1$), and \cite{FI2022} 
for the local well-posedness on $\mathbb{T}^{d}$, by means of different techniques.

 The description of the behaviour of local solutions for \emph{long} time is more involved 
 and requires additional arguments with respect to the local theory.
 On the Euclidean space 
 it is well know that dispersive effects, such as decay  in time of the norms of linear solutions,
 could help to control Sobolev norms on large time scales.
 Obviously the use of such decay properties strongly depends both on the dimension 
 and on the specific nonlinearity at stake, however there are striking results 
 proving even global well-posedness (i.e. a control on the solution for any time). 
 See for instance the seminal works \cite{BreGal80}, and \cite{GVelo, CollianderEco}. 

On the contrary, on compact manifolds the decay of solutions is no more 
a key tool for studying the well-posedness problem. 
Of course, conservation laws may help in getting global solutions
or long time existence, but they rely on very specific \textit{integrable} models. 
See for instance \cite{KaP, GreKapBook} and references therein. 
Concerning global well-posedness for non-integrable models of NLS, 
as notable examples 
we refer to Bourgain \cite{Bou1999} for $\mathbb{T}^{d}$ up to dimension $3$ 
and to Burq-G\'erard-Tzvetkov \cite{BGT04, BGT05} for  generic compact 
manifolds of dimension $2$ and $3$; in the $3$-dimensional cases 
of these works only polynomial nonlinearity of low degree are considered. 
We send the interested reader to the more recent results 
\cite{HerrTatTzve2011, IonescuPaus2012} on the quintic NLS for small/big initial data, 
and \cite{PlanchonTzvetVisc2017} for a different approach based on \textit{modified energy} techniques. 
 
It is worthwhile to mention that on compact manifolds 
the dynamics is governed by the interaction 
 between linear and nonlinear frequencies of oscillations and possible resonant effects. 
 A dynamical system approach, at least in the case of small-initial data, 
 then revealed to be an effective viewpoint to handle the study of the evolution of solutions. 
Let us be more precise. Many of the PDEs under study, 
after possibly appropriate variable's change, can be formulated as 
 $ \ii \dot u = L u + N(u)$
  where $u$ lives in some Banach space, $L$ is an appropriate self-adjoint operator 
  depending on the problem, and $N$ is a nonlinear term $\sim O(|u|^{q+1})$, $q\ge 1$.
  In the case of initial data of size $\epsilon\ll1$, where the nonlinearity 
  is viewed as a perturbation of the linear flow,  classical \emph{local in time} theory guarantees  
  that the solutions have actually a \emph{linear} behaviour (hence stable), 
  in the sense that they are governed by  oscillatory motions with \emph{linear} frequencies 
  given by the eigenvalues of $L$, for times  of order $t\sim O(\epsilon^{-q})$. 
  We shall refer to this threshold of time as (trivial) \emph{nonlinear time scale}.
  
  \noindent
 For longer time scales the effect of the nonlinearity becomes non-trivial
and one expects that the dynamics is led by the resonant interactions 
 of the linear frequencies of oscillations.
 In order to investigate this phenomenon, 
 Birkhoff normal form's approach has been extensively used as an effective tool
 for proving long time (or almost global) existence and stability of solutions.
We quote the seminal work of
Bambusi \cite{Bambusi:2003}  and Bambusi-Gr\'ebert \cite{BG}
where the authors
consider (among other models) the Klein-Gordon equation on the circle.
They prove almost global existence in the sense that, 
for  any $N\geq1$ and any initial datum in $H^{s}(\mathbb{T};\mathbb{C})$
of size $\epsilon\ll1$ with $s\gg1$ large enough, 
the solution exists and its $H^{s}$-Sobolev norm
remains small over a time interval of size  $O(\epsilon^{-N})$.
Similar results have been obtained
for semi-linear equations on different manifolds, see for instance \cite{BDGS, DS0},
while, for quasilinear equations, we refer to
\cite{Delort-2015, Berti-Delort, 
BFP2022, BFF2021}.

Note that all the above results require very strong non-resonant 
conditions on the frequencies of oscillations so that an additional linear term 
(e.g. a multiplicative or convolution potential, a mass...) 
must be introduced in the equation. 
This term provides additional free parameters
that enable one to modulate the frequencies 
and keeping the non-resonance requirement. 
It is well-known that these conditions are quite difficult to met in dimension greater or equal than $2$, 
even with an external potential at disposal. See for instance 
\cite{Delort:2009vn, FangZhang10, Imekraz2016, BFGI2021, FGI20, FIM2022}.

In this kind of study, observe that Sobolev regularity in data usually corresponds 
to a lifespan of solutions which is polynomial in terms of the size of the initial datum. 
More recently, some results about exponential stability-times 
have been achieved, but they all rely on models in one space dimension 
and in presence of an external potential as source of parameters. 
Of course, it is natural to expect exponential-type times whenever the
initial data are very regular like analytic or Gevrey for example, 
see for instance \cite{Faou-Grebert:2013, CongMiShi22}.
 
On the other hand, in the case of Sobolev initial data, a refined 
balance between regularity and size has been exploited 
in \cite{BMP:CMP, BMP:linceiStab} 
to get exponential-type results for NLS with convolution potential. 
See also \cite{FeoMass1} for a degenerate case, the Beam equation, 
where only one parameter (the mass) is at disposal for the frequency modulation. 
This connection between time and regularity is a common feature 
also in the finite dimensional context, see for instance 
\cite{Bounemoura_Fayad_Niederman_2020_2, BMM:CMP} 
and reference therein.

\vspace{0.6em}
\textbf{Results.} In this work we study the problem of long time 
existence of solutions and possible stability in the context of 
a \textit{completely resonant} model 
(i.e. the linearized model admits an infinite dimensional subspace of periodic solutions) 
in high space dimension, 
where the linear frequencies of oscillations 
involve infinitely many resonant interactions.
This is the case of equation \eqref{QLNLS}, 
where the linear frequencies of oscillations 
are given by $|j|^{2}$ for $j\in\Z^{d}$, i.e. the spectrum of $-\Delta$ on $\T^{d}$.
Since all these frequencies are integer, 
one trivially sees that a diophantine equation like 
\[
|j_{1}|^{2} \pm |j_{2}|^{2}\pm \cdots \pm |j_{n}|^{2} = 0\,, 
\qquad 
j_{i}\in\Z^{d}\,, \;\;\; i = 1,\ldots, n\,, 
\]
 which represents a resonant $n$-wave interaction with $n\geq 2$, 
 admits infinitely many solutions.  
 A Birkhoff Normal Form approach would \emph{reduce} the system 
 to a resonant one plus a non-resonant one of smaller size. 
 Then, the leading dynamics, involving such a large resonant subset,  
 may give birth to instability and complicated phenomena, see for example \cite{CKSTT}.
Note that in BNF context \textit{long time} existence is a byproduct of the stability of initial data. 
Of course, it is quite hard to implement this idea in completely resonant systems.

 In this resonant context, we shall introduce a different paradigm based on a tameness property 
 of a Sobolev-type norm $\norm{\cdot}_{s}$, 
 available on the scale  $(H^{s}, \norm{\cdot}_{s})$ with $s\ge s_{1} > d/2$,  
 which is equivalent to the classical norm $\norma{\cdot}_{H^s}$ 
 (see Definition \ref{def:equinorm} and Lemma \ref{lemmaequivalenza}). 
 More precisely, inspired by the observation in \cite[Remark $1.4$]{BMP:CMP} 
 (see also \cite{CongMiShi22}), 
 we take advantage of the rescaling from high to low Sobolev indices 
 in order to gain a sharper control on the size of the nonlinearity. 
 This produce stability of the norm $\norma{\cdot}_{H^{s}}$ 
 over a time scale which is exponentially long in the regularity parameter $s_{1}$. 
 
 \noindent
 This is the content of Theorem \ref{thm:main1} stated below. 
 By combining that with appropriate para-differential calculus, we construct a suitable 
 modified energy that enables us to have \textit{a priori} upper bounds 
 on the possible growth of the \textit{high} Sobolev norm $\norma{\cdot}_{H^{s}}$, 
 for finite but long times that depend only on the 
 \textit{low} norm $\norma{\cdot}_{{H^{s_{1}}}}$. 
 The result holds for all $H^{s}$-initial data, under appropriate 
 smallness condition  both on the low and $L^{2}$ norms. 
 
The model \eqref{QLNLS}, prototype  of NLS equations with polynomial
nonlinearity of \emph{odd} degree $|u|^{2p}u$, 
posed on the \emph{straight} torus $\mathbb{T}^{d}$ is widely studied. 
We stress the fact that  our approach applies 
 to \gr{any} space dimension and for \gr{any} degree $p$ of the nonlinearity.
Moreover we point out that our method applies both in the \emph{defocusing} and 
 \emph{focusing}  case (sign $+$ and $-$ respectively in \eqref{QLNLS}).
 Without being redundant we choose the sign $+$ in our computations.
We believe that a generalization to \emph{generic} tori 
 and more general nonlinearities is only a technical issue. 
 For the sake of clarity we choose to work on this simpler (non trivial) model
to enlighten the strength of the method itself. 

The core of the paper is the following 

\begin{theorem}{\bf (Long time estimates on high Sobolev norms).}\label{thm:main3}
Fix $0<\e\ll1$, $p\geq1$, $s_0>d/2$,  consider $s_1\geq s_0+2$.
There exist  absolute constants $\mathtt{C}\geq \mathtt{M}>0$ such that
for any $s\geq s_1+1 $ and any 
$u_0\in H^{s}(\T^{d};\C)$ such that
\begin{equation}\label{piccolezza dati tris}
\|u_0\|_{H^{s_1}}\leq \e\,,
\qquad 
(2\mathtt{M})^{s_1}\|u_0\|_{L^{2}}\leq \e\,,
\qquad 
\|u_0\|_{H^{s}}<+\infty\,,
\end{equation}
the following holds.
There exist a time $T=T(u_0,\mathtt{M},s_1)>0$
and a unique solution $u(t)$ of \eqref{QLNLS} with initial condition $u(0)=u_0$ 
such that
\begin{equation}\label{patata1}
\begin{aligned}
&u\in C^{0}\big([0,T]; H^{s}(\T^{d};\C)\big)\cap 
C^{1}\big([0,T]; H^{s-2}(\T^{d};\C)\big)\,,
\\& T\geq T_{good}:=\frac{1}{\e^{2p}}
\frac{1}{2^{4(2p+2)}}\frac{2^{2p(s_1-s_0)}}{\mathtt{M}^{2ps_0}}\,.
\end{aligned}
\end{equation} 
Moreover one has
\begin{equation}\label{stimaIncredibleBis}
\|u(t)\|_{H^{s}}
\leq 
 3 \mathtt{C}^{ps(s-s_1)}
 \Big(
 \|u_0\|_{H^s} + (2\mathtt{M})^{s}\|u_0\|_{L^{2}}
 \Big)
 \Big[
 1+\Big(
 \mathtt{M}^{2p(s-s_1)}\frac{2^{4p}\mathtt{M}^{2p s_0}}{2^{2p(s_1-s_0)}}\e^{2p}\, t
 \Big)^{s-s_1}
 \Big]\,,
\end{equation}
for any $t\in[0,T]$ and,
as a consequence, the bound
\begin{equation}\label{stimaIncredible}
\sup_{t\in[0,\widehat{T}]}\|u(t)\|_{H^{s}}
\leq 
3\mathtt{C}^{ps(s-s_1)}
\Big(
\|u(0)\|_{H^s}+ (2\mathtt{M})^{s}\|u_0\|_{L^{2}}
\Big)
(1+
\mathtt{M}^{2p(s-s_1)^2}) \,,
\qquad
\forall\, \widehat{T}\leq T_{good}\,.
\end{equation}
\end{theorem}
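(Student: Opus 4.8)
The plan is to upgrade, one unit of Sobolev regularity at a time, the low-norm information provided by Theorem \ref{thm:main1}, using at each level a modified energy. By Theorem \ref{thm:main1}, under \eqref{piccolezza dati tris} the Cauchy problem has a unique solution on $[0,T]$ with $T\geq T_{good}$ along which $\|u(t)\|_{H^{s_1}}\lesssim\e$, while $\|u(t)\|_{L^2}=\|u_0\|_{L^2}$ by conservation of mass (indeed \eqref{piccolezza dati tris} says exactly $\|u_0\|_{s_1}\lesssim\e$ in the norm of Definition \ref{def:equinorm}). What remains is to propagate the $H^{s}$-regularity and prove \eqref{stimaIncredibleBis}; the former follows from the latter by the usual mollification/continuation argument (run the smooth flow, which persists as long as the $H^{s_1}$-norm is bounded, use \eqref{stimaIncredibleBis} as a uniform a priori bound, pass to the limit), so I would focus on \eqref{stimaIncredibleBis}, working throughout with the rescaled norm $\|\cdot\|_s$ of Definition \ref{def:equinorm}, whose equivalence with $\|\cdot\|_{H^s}$ (Lemma \ref{lemmaequivalenza}) is precisely what makes the combination $\|u_0\|_{H^s}+(2\mathtt M)^{s}\|u_0\|_{L^2}$ appear in \eqref{stimaIncredibleBis}.

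First, paralinearise: for the smooth map $(u,\bar u)\mapsto|u|^{2p}u=u^{p+1}\bar u^{p}$ Bony's theorem gives
\[
|u|^{2p}u=\opbw\!\big((p+1)|u|^{2p}\big)\,u+\opbw\!\big(p\,u^{p+1}\bar u^{p-1}\big)\,\bar u+R(u),
\]
with $R(u)$ a remainder enjoying tame estimates and a gain of derivatives. Inserting this into the energy identity for $\|\cdot\|_\sigma^{2}$ (at a generic level $\sigma\in(s_1,s]$), the linear term $-\Delta u$ gives no contribution ($-\Delta$ is self-adjoint and commutes with the Fourier multiplier of $\|\cdot\|_\sigma$), the first paraproduct has a \emph{real} symbol hence is self-adjoint modulo one smoothing order, and $R(u)$ is harmless. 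One is left with a single dangerous term, the contribution of $\opbw(p\,u^{p+1}\bar u^{p-1})\bar u$, schematically $\mp 2\,\Im\langle\opbw(b)\,\overline w,w\rangle$ with $b=p\,u^{p+1}\bar u^{p-1}$ and $w=(\sqrt{-\Delta})^{\sigma}u$, which is only $O(\|u\|_{s_1}^{2p}\|u\|_\sigma^{2})$ and alone would force exponential-in-time growth.

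To remove it I would add to $\|w\|_{L^2}^{2}$ the correction $\Re\langle\opbw(\beta)\,\overline w,w\rangle$, with $\beta=\beta(u;\xi)$ a symbol of order $-2$ --- schematically $\beta\sim b\,|\xi|^{-2}$ up to a low-frequency cutoff, the zero mode handled separately via the conserved $L^2$-norm --- tuned so that the contribution of the \emph{linear} flow to $\tfrac{d}{dt}\langle\opbw(\beta)\overline w,w\rangle$, which by symbolic calculus equals to leading order $-2\mathrm{i}\langle\opbw(|\xi|^{2}\beta)\,\overline w,w\rangle$, exactly cancels the dangerous term. Being of order $-2$, this correction is $O(\|u\|_{s_1}^{2p}\|u\|_{\sigma-1}^{2})$, so on $[0,T]$ the modified energy $\mathcal E_\sigma(u)=\|u\|_\sigma^{2}+\Re\langle\opbw(\beta)\overline w,w\rangle$ stays comparable to $\|u\|_\sigma^{2}$; estimating all surviving terms (commutators $[(\sqrt{-\Delta})^{\sigma},\opbw(\cdot)]$, adjoint remainders, $\partial_t\beta$, $R(u)$, the cutoff and zero-mode corrections) by the tame estimates of the preceding sections gives
\[
\frac{d}{dt}\,\mathcal E_\sigma(u(t))\;\leq\;\mathtt C^{p\sigma}\,\|u(t)\|_{s_1}^{2p}\,\|u(t)\|_{\sigma-1}\,\|u(t)\|_\sigma,\qquad t\in[0,T],
\]
whose essential feature is that the growth of the top norm is driven by the \emph{lower} norm $\|\cdot\|_{\sigma-1}$ at the price of a constant only exponential in $\sigma$; this is exactly where the rescaled norm is used, to control that constant and to trade low-Sobolev factors for $L^2$-factors weighted by powers of $\mathtt M$.

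From this, $\tfrac{d}{dt}\|u\|_\sigma\leq\mathtt C^{p\sigma}\|u\|_{s_1}^{2p}\|u\|_{\sigma-1}$ for every $\sigma\in(s_1,s]$; integrating in time and iterating these inequalities from $\sigma=s$ down to $\sigma=s_1$ (descending one unit at a time, with interpolation/monotonicity for a non-integer $s-s_1$, and $\|u\|_{s_1}\lesssim\e$, $\|u\|_{L^2}=\|u_0\|_{L^2}$ at the bottom), each integration contributes a factor $\int_0^{t}\mathtt C^{p\sigma}\|u\|_{s_1}^{2p}\,d\tau\lesssim\mathtt C^{p\sigma}\e^{2p}t$ and one $\sigma$-exponential constant; collecting all of them --- and here the $\mathtt M$-bookkeeping enters: one checks the resulting driving factor equals $\mathtt M^{2p(s-s_1)}/T_{good}$ up to an absolute constant --- and converting back from $\|\cdot\|_s$ to $\|\cdot\|_{H^s}$ via Lemma \ref{lemmaequivalenza} produces \eqref{stimaIncredibleBis} with prefactor $3\,\mathtt C^{ps(s-s_1)}\big(\|u_0\|_{H^s}+(2\mathtt M)^{s}\|u_0\|_{L^2}\big)$ and a polynomial-in-$t$ bracket of degree $s-s_1$ (the discrepancy with $\lceil s-s_1\rceil$ being absorbable since $\mathtt C\geq\mathtt M$); specialising to $t\leq\widehat T\leq T_{good}$ bounds that bracket by $1+\mathtt M^{2p(s-s_1)^{2}}$, which is \eqref{stimaIncredible}. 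The main obstacle is the modified-energy construction: cancelling $\mp 2\Im\langle\opbw(b)\overline w,w\rangle$ \emph{exactly at leading order} while keeping every remaining contribution genuinely of order $\sigma-1$ with only a $\sigma$-exponential constant --- this is what turns Gr\"onwall from exponential into polynomial in $t$ after the iteration --- together with the delicate bookkeeping of the $\mathtt M$- and $L^2$-dependent constants needed for the final time scale to be the exponentially long $T_{good}$.
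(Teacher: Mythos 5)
Your proposal is correct and follows essentially the same strategy as the paper: low‑norm control on $[0,T_{good}]$ from Theorem \ref{thm:main1}, paralinearization of $|u|^{2p}u$, and a modified energy built from an order $-2$ correction $\sim T_{b}\langle D\rangle^{-2}$ whose anti\nobreakdash-commutator with $\Delta$ cancels the dangerous off-diagonal term $T_{b}\bar u$ at leading order, leaving a one-smoothing energy inequality $\tfrac{d}{dt}E_\sigma\lesssim_\sigma |u|_{s_1}^{2p}|u|_{\sigma-1}|u|_{\sigma}$. The paper realizes the same correction as a para-differential change of variables $W=(\uno+\mathcal G^{>N}(u))U$ with $\mathcal G^{>N}$ built from $c=b/2$ (Proposition \ref{prop:block}), which is your energy correction in disguise.

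The one genuine divergence is the passage from the smoothing energy inequality to the polynomial-in-$t$ bound. You iterate the inequality one derivative at a time from $\sigma=s$ down to $\sigma=s_1$, which requires running the modified-energy construction at every intermediate level and yields degree $\lceil s-s_1\rceil$ (fixable by interpolation at the last fractional step, and harmless for \eqref{stimaIncredible} since the driving factor is $O(1)$ on $[0,T_{good}]$). The paper instead interpolates once, $|u|_{s-1}|u|_{s}\leq |u|_{s_1}^{1/(s-s_1)}|u|_{s}^{2\alpha}$ with $\alpha=1-\tfrac{1}{2(s-s_1)}$, and applies the nonlinear Gr\"onwall inequality of Lemma \ref{drago}-$(ii)$ to get the sharp degree $s-s_1$ in a single step (Theorem \ref{thm:energy}, estimate \eqref{buttalapasta}). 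Your route is more elementary at each step (only linear integration) but accumulates the $\mathtt C^{p\sigma}$ equivalence constants across $\sim s-s_1$ levels; the paper's route concentrates the whole loss into the single factor $\mathtt C^{2ps(s-s_1)}$. Both bookkeepings land on the same prefactor, so the argument goes through either way.
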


Some comments are in order.

\begin{enumerate}
\item First of all we remark that the existence result is obtained for Sobolev data in $H^{s}$
satisfying   a smallness condition \emph{only} in a low norm $s_1<s$. 
In particular the strongest requirement regards the 
$L^{2}$-norm which should be exponentially small in $s_1$.
This line of thoughts is similar to the one used in \cite{FM2022}
where the control on high norms and the life span depends only on the \emph{size} of the low 
$\|u_0\|_{H^{s_1}}$.
However here 
the situation is quite different and more favourable,  
since the lower bound on the time of existence is exponentially large w.r.t. $s_1$, 
so that one can make the lifespan as long as desired just by taking the regularity $s_1$, 
and as a consequence $s$, large enough at the price of asking for 
a very small $L^{2}$-norm of the initial condition.
For further details on this we refer to the comments below Theorem \ref{thm:main1}.

\item The upper bound \eqref{stimaIncredibleBis} on the \emph{high} norm $\|\cdot\|_{H^{s}}$
is polynomial in the time $t$, with degree depending on $s-s_1$. 
We remark that polynomial growth in $t$ is not trivial, since a straightforward application of 
a standard Gr\"onwall-type inequality would lead only to an exponential bound in $t$.
Estimate \eqref{stimaIncredibleBis} guarantees  the bound \eqref{stimaIncredible} over 
the  time interval $[0,T_{good}]$.

\item  The drawback of working in any dimension 
and considering any $p$ in the nonlinearity is that
we  are forced to work with sufficiently regular spaces where algebra property holds.
 The threshold $s>d/2+3$ is required to obtain 
 the basic nonlinear estimates provided in Lemma \ref{lem:confort2} and 
 to perform some \emph{paradifferential changes of coordinates}
 in subsection \ref{sec:improved} that we need 
 in order to construct a suitable modified energy for the solutions.

\item 
We  believe that our techniques,  combined with some of the ideas in \cite{FM2022},
could be extended to the case \emph{generic} tori $\mathtt{T}^{d}_{\Gamma}:=\mathbb{R}^{d}/\Gamma$
where $\Gamma$ is an arbitrary periodicity lattice.
We remark also that  the nonlinearity $|u|^{2p}u$ is 
\emph{gauge} and \emph{momentum} preserving,
even if they are not strictly necessary for our aims.
More general polynomial (or analytic) nonlinearities could be considered. 
We choose this model in order to simplify notation and technicalities. 

\item We did not attempt to optimise the constant $\mathtt{M}$. 
In \cite{BMP:CMP} and \cite{CongMiShi22} $\mathtt{M}$ is fixed equal $1$. 
This choice is sufficient to prove bounds like \eqref{lem:confort2} in \cite{CongMiShi22} 
but the choice of \emph{cubic} nonlinearity $|u|^{2}u$ 
for a NLS equation seems a necessary requirement.
\end{enumerate}

As explained above, the study of high norms $\|\cdot\|_{H^{s}}$ in Theorem \ref{thm:main3}
is based on the control over a certain time scale of the low norms $\|\cdot\|_{H^{s_1}}$. 
In particular  one needs \emph{stability} of such low norms.  
This point, in a different situation emerges also in \cite{FM2022}. 
In fact, in the cited paper the stability of low norms is obtained,
along a time scale quadratic in $\e$, through Birkhoff normal form techniques. 
In the present paper  the idea is rather different, 
moreover we are able to obtain a control over a time scale that is exponential 
in the regularity parameter $s_1$. 
This fact implies even an exponentially 
long stability, w.r.t. the size $\e$, for sufficiently regular initial data. 
This is the content of the following result. 

\begin{theorem}\label{thm:main1}
Fix $0<\e \ll1$, $s_1\geq s_0>d/2$ and 
$p\geq 1$.
There exists an absolute constant $\mathtt{M}>0$ such that
for any 
$u_0\in H^{s_1}(\T^{d};\C)$ such that
\begin{equation}\label{piccolezza dati}
\|u_0\|_{H^{s_1}}\leq\e\,,
\qquad 
(2\mathtt{M})^{s_1}\|u_0\|_{L^{2}}\leq \e\,,
\end{equation}
the following holds. 
There exist a time $T:=T(u_0,\mathtt{M},s_1)>0$ 
and a unique solution $u(t)$ of \eqref{QLNLS} with $u(0)=u_0$ and 
\begin{equation}\label{storm}
\begin{aligned}
&u\in C^{0}\big([0,T]; H^{s_1}(\T^{d};\C)\big)\cap 
C^{1}\big([0,T]; H^{s_1-2}(\T^{d};\C)\big)\,,
\\& T\geq T_{good}:=\frac{1}{\e^{2p}}
\frac{1}{2^{4(2p+2)}}\frac{2^{2p(s_1-s_0)}}{\mathtt{M}^{2ps_0}}\,,
\end{aligned}
\end{equation} 
satisfying the bound
\begin{equation}\label{stimaccia}
\sup_{t\in[0,T]}\|u(t)\|_{H^{s_1}}
\leq 
6\Big(\|u_0\|_{H^{s_1}}+(2\mathtt{M})^{s_1}\|u_0\|_{L^{2}}\Big)\,.
\end{equation}
If in addition for $s\geq s_1$ one also has  $u_0\in H^{s}(\T^d;\C)$
then the solution $u(t)$ evolving form $u_0$
satisfies \eqref{patata1} and the bound
\begin{equation}\label{stimaccia2bis}
\|u(t)\|_{H^{s}}
\leq 
3 \Big(
\|u_0\|_{H^{s}} +(2\mathtt{M})^{s}\|u_0\|_{L^2}
\Big)
\exp\big(
\mathtt{M}^{2p(s-s_1)} 
\frac{2^{4p+1}\mathtt{M}^{2ps_0}}{2^{2p(s_1-s_0)}}\e^{2p}t\big)\,,
\qquad 
\forall\; t\in[0,T]\,.
\end{equation}
As a consequence one has
\begin{equation}\label{stimaccia2}
\sup_{t\in[0,\widetilde{T}]}\|u(t)\|_{H^{s}}
\leq 
3 \Big(\|u_0\|_{H^{s}} +(2\mathtt{M})^{s}\|u_0\|_{L^{2}}\Big)
\exp\big\{\mathtt{M}^{2p(s-s_1)} \big\}\,,
\qquad 
\widetilde{T}\leq T_{good}\,.
\end{equation}
\end{theorem}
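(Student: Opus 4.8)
The proof rests on three ingredients together with a continuation argument. \emph{(i) The modified norm.} We work with the $\mathtt M$-modified norm $\|\cdot\|_s$ of Definition~\ref{def:equinorm}, an equivalent weighted $\ell^2$ (hence Hilbert) norm on $H^s$ which, by Lemma~\ref{lemmaequivalenza}, satisfies $\|u\|_s\asymp\|u\|_{H^s}+(2\mathtt M)^s\|u\|_{L^2}$. Since the $L^2$-mass is conserved along \eqref{QLNLS}, the ``low-frequency piece'' $(2\mathtt M)^s\|u(t)\|_{L^2}\equiv(2\mathtt M)^s\|u_0\|_{L^2}$ of this norm is constant in time. \emph{(ii) The rescaling inequality.} Splitting Fourier modes according to $|j|\lessgtr 2\mathtt M$ and using that the weight defining $\|\cdot\|_{s_0}$ is the weight of $\|\cdot\|_{s_1}$ times a factor bounded by $(2\mathtt M)^{-(s_1-s_0)}$, one gets $\|u\|_{s_0}\le(2\mathtt M)^{-(s_1-s_0)}\|u\|_{s_1}$ for all $s_0\le s_1$. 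In particular \eqref{piccolezza dati} yields $\|u_0\|_{s_0}\lesssim(2\mathtt M)^{-(s_1-s_0)}\e$, a quantity \emph{exponentially small in} $s_1-s_0$; this is the whole source of the long lifespan.

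\emph{(iii) The energy estimate.} For smooth solutions — the general case follows by approximation — we differentiate $\|u(t)\|_s^2$. Writing \eqref{QLNLS} in Fourier coordinates as $\partial_t u_j=\ii|j|^2u_j+\ii(|u|^{2p}u)_j$, the linear term contributes $\Re\big(\ii|j|^2\cdot(\text{real weight})\cdot|u_j|^2\big)=0$ (the linear flow is unitary on $(H^s,\|\cdot\|_s)$), so
\[
\tfrac12\tfrac{d}{dt}\|u(t)\|_s^2=-\,\Im\langle u,\,|u|^{2p}u\rangle_s\le\|u\|_s\,\|\,|u|^{2p}u\,\|_s .
\]
The sign of the nonlinearity plays no role, so defocusing and focusing are handled at once. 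Applying the tame estimate of Lemma~\ref{lem:confort2} (legitimate since $s\ge s_1+1$ and $s_0>d/2$) in the form $\|\,|u|^{2p}u\,\|_s\le C\,\|u\|_{s_0}^{2p}\|u\|_s$, with $C$ absorbed into a power of $\mathtt M$ after fixing $\mathtt M$ large in terms of $p$ and $d$, we obtain
\[
\tfrac{d}{dt}\|u(t)\|_s^2\le\kappa_s\,\|u(t)\|_{s_0}^{2p}\,\|u(t)\|_s^2 ,\qquad\kappa_s\lesssim\mathtt M^{2ps}.
\]

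For the \emph{low} norm we run a bootstrap. Local well-posedness in $H^{s_1}$ ($s_1>d/2$) gives a unique maximal solution whose $H^{s_1}$-norm cannot blow up while it stays bounded. Let $T^\ast$ be the supremum of times on which $\|u(t)\|_{s_1}\le 6\big(\|u_0\|_{H^{s_1}}+(2\mathtt M)^{s_1}\|u_0\|_{L^2}\big)$. On $[0,T^\ast)$ the rescaling inequality and mass conservation give $\|u(t)\|_{s_0}\lesssim(2\mathtt M)^{-(s_1-s_0)}\e$, so the energy inequality at $s=s_1$ reads $\tfrac{d}{dt}\|u\|_{s_1}^2\le\kappa_{s_1}(2\mathtt M)^{-2p(s_1-s_0)}(C'\e)^{2p}\|u\|_{s_1}^2$ with $\kappa_{s_1}(2\mathtt M)^{-2p(s_1-s_0)}\lesssim\mathtt M^{2ps_0}2^{-2p(s_1-s_0)}$. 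By Grönwall's lemma together with the precise choice of $T_{good}$ in \eqref{storm}, this forces $\|u(t)\|_{s_1}\le 3\big(\|u_0\|_{H^{s_1}}+(2\mathtt M)^{s_1}\|u_0\|_{L^2}\big)$ for $t\le\min\{T^\ast,T_{good}\}$, strictly improving the bootstrap assumption; the usual continuation argument then gives $T^\ast>T_{good}$, hence the existence statement \eqref{storm} and, passing back to $\|\cdot\|_{H^{s_1}}$, the bound \eqref{stimaccia}. Note that the trivial nonlinear lifespan $\e^{-2p}$ has been multiplied by the exponentially large factor $2^{2p(s_1-s_0)}$ produced by the rescaling.

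For the \emph{high} norm, assume in addition $u_0\in H^s$, $s\ge s_1$. Local theory in $H^s$ and persistence of regularity — $\|u\|_{H^s}$ stays finite as long as $\|u\|_{H^{s_1}}$ does, again by Lemma~\ref{lem:confort2} — give a solution in $C^0([0,T];H^s)\cap C^1([0,T];H^{s-2})$, i.e.\ \eqref{patata1}. On $[0,T_{good}]$ we use once more $\tfrac{d}{dt}\|u\|_s^2\le\kappa_s\|u\|_{s_0}^{2p}\|u\|_s^2$, but now we bound $\|u(t)\|_{s_0}$ \emph{by the already controlled low norm}: $\|u(t)\|_{s_0}\le(2\mathtt M)^{-(s_1-s_0)}\|u(t)\|_{s_1}\lesssim(2\mathtt M)^{-(s_1-s_0)}\e$. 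This turns the differential inequality into a \emph{linear} Grönwall inequality $\tfrac{d}{dt}\|u\|_s^2\le\mu_s\|u\|_s^2$ with $t$-independent $\mu_s\lesssim\mathtt M^{2p(s-s_1)}\mathtt M^{2ps_0}2^{-2p(s_1-s_0)}\e^{2p}$, whence $\|u(t)\|_s\le\|u_0\|_s\exp(\tfrac12\mu_s t)$; returning to $H^s$ gives \eqref{stimaccia2bis}, and evaluating at $t=T_{good}$, so that $\mu_s T_{good}$ is a small absolute multiple of $\mathtt M^{2p(s-s_1)}$, gives \eqref{stimaccia2}. The main obstacle is the energy/tame estimate in the modified norm: one must track the dependence of the tame constant on $s$ and $\mathtt M$ accurately enough that its loss (a power of $\mathtt M$) is dominated by the gain $2^{-2p(s_1-s_0)}$ coming from the rescaling; a secondary point is to promote the a priori estimates to a genuine solution with the stated regularity, i.e.\ to combine local well-posedness with the blow-up criterion in the $H^s$-scale and to justify the energy identity by smooth approximation.
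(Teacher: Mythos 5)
Your proposal is correct and follows essentially the same route as the paper: the equivalent weighted norm of Definition \ref{def:equinorm} with $R=2\mathtt M$, the rescaling $|u|_{s_0}\le (2\mathtt M)^{-(s_1-s_0)}|u|_{s_1}$ (Remark \ref{scaling property}), the tame estimate of Lemma \ref{lem:confort2} producing the gain $2^{-2p(s_1-s_0)}$ in the $H^{s_1}$ energy inequality (Theorem \ref{thm:energyBasic}), a continuity/bootstrap argument on the low norm up to $T_{good}$, and a linear Gr\"onwall bound for the high norm (Theorem \ref{thm:energyHighhigh}). The only cosmetic differences are that you linearize the low-norm inequality before applying Gr\"onwall whereas the paper keeps $|u|_{s_1}^{2p+2}$ under the integral and argues by contradiction on the escape time, and that you invoke $L^2$-conservation, which the paper does not need.
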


The control of the norm $\|\cdot\|_{H^{s_1}}$ over the time interval in \eqref{storm}
is similar to the one given in \cite{CongMiShi22}.
It is essentially based on some  scaling/tame type inequalities
fulfilled by a suitable equivalent Sobolev norm, 
which we introduce in Definition \ref{def:equinorm}.
We refer the reader to the crucial estimates given by 
Lemmata  \ref{interopolo} and \ref{lem:tametame}.
The basic idea behind the introduction of such kind of norm is 
to weight differently (in terms of the regularity parameter $s$)
the low frequencies of a function  w.r.t. the high ones.
This allows us to describe, in an efficient way, the fact that we essentially work 
on Sobolev solutions of \eqref{QLNLS} whose energy is {\bf not} concentrated on low modes.
Actually condition \eqref{piccolezza dati} entails exactly this requirement, 
since the second bound is satisfies only if the mass of 
the function $u_0$ is not supported on low modes. 
We refer the interested reader to Remark $1.4$ in \cite{BMP:CMP} for further comments on this.
The lower bound \eqref{storm} on the time of existence and stability reflects this fact. 
In other words if one starts with very few energy on the low modes, 
then the Sobolev norms  $\|\cdot\|_{H^{s_1}}$
are stable for  very long time in terms of $s_1$.
We remark that, by requiring that $\|u_0\|_{H^{s_1}}\leq \e$ only,
then one could only hope to get a time of stability $T\sim \e^{-2p}$, 
which is the natural nonlinear time scale.

\vspace{0.6em}
On the other hand Theorem \ref{thm:main1}
provides an extra factor $2^{2p(s_1-s_0)}$ in the estimate of the lifespan.
This suggest  in particular that one can balance the \emph{regularity} of initial data with 
their  $L^2$-\emph{size} to obtain any desired long time stability of solutions.
Of course the price  to pay is asking for a smaller $L^2$-norm of the initial data.
For instance, the possible choices
\begin{equation*}
s_1:=\frac{1}{\e \ln 2} \,,
\quad \quad \text{or} \quad \quad 
s_1:=\frac{ e^{1/\e}}{\ln2}\,,
\end{equation*} 
lead to \emph{exponentially} or \emph{super-exponentially} 
long times of stability respectively, 
i.e. \eqref{stimaccia} holds true over a time interval $[0,T]$ with 
\[
T\gtrsim_{s_0,p}\e^{-2p} \exp\Big\{\frac{2p}{\e}\Big\}\,, 
\quad \quad \text{or} \quad \quad 
T\gtrsim_{s_0,p}\e^{-2p} \exp\Big\{2p\exp\{\frac{1}{\e}\}\Big\}\,.
\]

\smallskip

We mention, \emph{en passant}, that besides the control of the $H^{s_1}$-norm, 
Theorem \ref{thm:main1} gives a control of the $H^{s}$-norm for $s\geq s_1$ 
over the same time interval given in \eqref{storm}. 
However such control is a consequence of a standard Gr\"onwall-type Lemma,
that provides a rough exponential bound (from above) in time.

\noindent
The bound \eqref{stimaccia2}
is deeply and significantly improved in Theorem \ref{thm:main3} (see \eqref{stimaIncredible}).

In Theorems \ref{thm:main3}-\ref{thm:main1}
we exploit deeply the \emph{stability} of the \emph{low} norm $\|\cdot\|_{H^{s_1}}$ 
of the solution and scaling properties of the equivalent norms $|\cdot |_s$. 
One may ask whether is possible to get stability for longer times 
by combining this point of view with some steps of Birkhoff normal form.  
Of course this idea is possibly successful only on 
strongly \emph{non-resonant} equations (i.e. in presence of external potential). 
This may be investigated in a further work.

\vspace{0.6em}
\noindent
{\bf Structure of the paper and strategy.}
In Section \ref{sec:funcsett}
we introduce the function spaces on which we work. 
As we mentioned before, we shall work with a  ``modified" Sobolev norm $|\cdot|_{{s_1}}$, 
equivalent to the standard one, 
whose definition and equivalence relation are given by 
Definition \ref{def:equinorm} and Lemma \ref{lemmaequivalenza}.
We also discuss \emph{para-products} and their properties, 
such as their action on Sobolev spaces and compositions 
(see subsections \ref{suppara}-\ref{subparacompo}).

In Section \ref{sec:paraNLS} we shall rewrite equivalently
equation \eqref{QLNLS} in a para-differential form
and we provide the first basic nonlinear \emph{tame} estimates
(see Lemma \ref{lem:confort2}).
Such estimates are used in Subsection \ref{sec:basicbasic}
for proving a first \emph{energy inequality} of the form
\[
|u(t)|^{2}_{{s_1}}\leq |u(0)|^{2}_{{s_1}}
+C\int_{0}^{t}|u(\s)|_{s_0}^{2p}|u(\s)|_{s_1}^{2}d\s\,,
\]
for some constant $C>0$, 
which provides an \emph{a priori} control on the
Sobolev norm $|\cdot|_{{s_1}}$, $s_1\geq s_0>d/2$ of the solution.
Thanks to the tame/scaling properties of the norm
the energy estimate above can be improved to
\[
|u(t)|^2_{{s_1}}\leq |u(0)|^2_{{s_1}}
+\widetilde{C}2^{-2p(s_1-s_0)}\int_{0}^{t}|u(\s)|_{s_1}^{2p+2}d\s\,.
\]
This is the content of Theorem \ref{thm:energyBasic}.
The extra factor $2^{-2p(s_1-s_0)} $
allows to prove the stability of the solutions over a time scale as in \eqref{storm}.
Combining the energy estimates above with a Gr\"onwall-like lemma,
one gets \emph{a priori} bounds on the high norms.
This is done in Theorem \ref{thm:energyHighhigh}.

\vspace{0.6em}
\noindent
The proof of the main result Theorem \ref{thm:main3}
is more subtle and requires an additional, novel approach.
The key idea, implemented in subsection \ref{sec:improved},
is to construct a \emph{modified} energy $E_{s}(t)$ that enjoys the following relations: 
one has
\begin{equation}\label{energiamigliore}
E_{s}(t) \sim |u(t)|^2_{s}\,, 
\qquad \quad 
E_{s}(t)\lesssim_{s} 
E_{s}(0)+C\int_{0}^{t}f(\s)E_{s}^{\alpha}(\s)d\s\,,
\end{equation}
where $f(t)$ depends only on the \emph{low} norm
$|u(t)|_{s_1}$ and $\alpha<1$ is strictly less than one.
This is the content of Theorem \ref{thm:energy}.

\noindent
The main step in getting the above relations is  the introduction 
of an auxiliary variable $w$ which solves 
an equation equivalent to the NLS \eqref{QLNLS}.
Then we set $E_{s}(t):=|w|^2_{s}$. 
Corollary \ref{equinono} guarantees the equivalence in \eqref{energiamigliore}. 
Then we construct a para-differential change of variables, 
see Proposition \ref{prop:block}, 
that allows to show that
\[
|w(t)|_{s}^{2}\lesssim_{s}
|w(0)|_{s}^{2}
+C\int_{0}^{t}f(\s)|w(\s)|_{s-1}|w(\s)|_{s}d\s\,.
\]
Since the norm $|\cdot|_{s}$ enjoys the classical \emph{interpolation estimates}
in Sobolev spaces we get
\[
|w(\s)|_{s-1}|w(\s)|_{s}\lesssim_s 
|w(\s)|_{s_1}^{1-\lambda}|w(\s)|^{\lambda}_{s}\,,\qquad 
\lambda=1-\frac{1}{(s-s_1)}\,,
\]
by the assumption $s\geq s_1+1$,
which implies the estimate in \eqref{energiamigliore}, by choosing 
$\alpha = 1-\tfrac{1}{2(s-s_1)} $.
Combining the improved energy estimates with an improved 
Gr\"onwall lemma (Lemma \ref{drago}-(ii) in the appendix)
we are able to get a sharper control on the high norms $|u(t)|_{s}$. 
In particular we show that these norms \emph{may} 
grow to infinity at most at a polynomial rate in $t$, 
this proving Theorem \ref{thm:main3}.

\section{Functional setting}\label{sec:funcsett}
As usual, we identify the Sobolev space of $2\pi$-periodic functions 
$\mathbb{\R}^{d}\ni x \mapsto u(x)\in \mathbb{C}$ with 
$H^{s}(\mathbb{T}^d;\mathbb{C})$, and develop any function $u: \T^d\to \C$ 
in its Fourier series as\footnote{We also use the notation
$u_n^+ := u_n := \widehat{u}(n)$ and
$u_n^- := \ov{u_n}  := \ov{\widehat{u}(n)}$.}
\begin{equation*}
u(x) = \frac{1}{(2\pi)^{d/2}}
\sum_{n \in \Z^{d} } \widehat{u}(n)e^{\ii n\cdot x } \, , \qquad 
\widehat{u}(n) := \frac{1}{(2\pi)^{{d}/{2}}} \int_{\mathbb{T}^{d}} u(x) e^{-\ii n \cdot x } \, dx \, .
\end{equation*}
We endow $H^{s}(\mathbb{T}^{d};\mathbb{C})$ with the norm 
\begin{equation*}
\|u\|_{H^{s}}:=\|u\|_{L^{2}}+\|(\sqrt{-\Delta})^{s}u\|_{L^{2}}\,.
\end{equation*}

\noindent 
Considering the Fourier expansion of periodic functions,
it is common habit to work on the equivalent Sobolev norm
\begin{equation*}
|u|^{2}_{H^{s}}=(\langle D\rangle^{s}u,\langle D\rangle^{s} u)_{L^{2}}=
\sum_{j\in \mathbb{Z}^{d}}\langle j\rangle^{2s}|u_{j}|^{2} 
\qquad
\langle D\rangle e^{\ii j\cdot x} :=\langle j\rangle  e^{\ii j\cdot x}\,,\;\;\; 
\forall \, j\in \mathbb{Z}^{d}\,,
\end{equation*}
where $(\cdot,\cdot)_{L^{2}}$ is the standard complex $L^{2}$-scalar product
 \begin{equation*}
 (u,v)_{L^{2}}:=\int_{\mathbb{T}^{d}}u\bar{v}dx\,, 
 \qquad \forall\, u,v\in L^{2}(\mathbb{T}^{d};\mathbb{C})\,,
 \end{equation*}
 and $\langle D\rangle $ is the linear operator, defined by
 \begin{equation}\label{langlesimbolo}
 \langle D\rangle e^{\ii j\cdot x} :=\langle j\rangle  e^{\ii j\cdot x}\,,
 \qquad \langle j\rangle:=\sqrt{1+|j|^{2}}\,,\quad j\in \Z^{d}\,.
 \end{equation}
 Indeed,  for any $s\geq1$, one can easily check that\footnote{Recall that 
 $(x+y)^{q}\leq 2^{q-1}(x^{q}+y^{q})$ for all $x,y\geq0$.\label{ciaoneproprio}}
 \[
\tfrac{1}{\sqrt{2}}\Big( \|u\|_{L^{2}}+\|\pa_{x}^{s}u\|_{L^{2}}\Big)
\leq |u|_{H^{s}}\leq 2^{\frac{s-1}{2}}\Big(\|u\|_{L^{2}}+\|\pa_{x}^{s}u\|_{L^{2}}\Big)\,.
 \]

\noindent
{\bf Notation}. 
We shall use the notation $A\lesssim B$ to denote 
$A\le C B$ where $C$ is a positive constant
depending on  parameters fixed once for all, 
for instance $d$ and $s$.
 We will emphasise by writing $\lesssim_{q}$
 when the constant $C$ depends on some other parameter $q$.
 We shall write $\lessdot_{s}$
 if there exist a constant $C>0$ (independent of $s$)
 such that $A\leq C^{s}B$.  
  We shall write $\lessdot_{s,s_0}$
 if there exist a constant $C>0$ (independent of $s$ and $s_0$)
 such that $A\leq C^{\max\{s,s_0\}}B$. 
 
 \noindent
Moreover, for $r\in\R^{+}$, we shall
denote by $B_{r}(H^{s}(\mathbb{T}^{d};\mathbb{C}))$
the open ball of $H^{s}(\mathbb{T}^{d};\mathbb{C})$ 
with radius $r$ centred at the origin.

For our purposes, on the space $H^{s}(\T^{d};\C)$ 
we shall define and make use of the following equivalent norm, 
instead of the usual one.

\begin{definition}{\bf (Equivalent norm).}\label{def:equinorm}
Let $s, R\in \R$ with $R>1$. 
Let us define the operator  $\jjap{D}$ defined by 
linearity as 
\begin{align}
&\jjap{D}e^{\ii j\cdot x}=\jjap{j}^{2}e^{\ii j\cdot x}\,,
\qquad \jjap{j}:=\jjap{j}_{R}:=\max{\{R,|j|\}}\,,\quad j\in\Z^{d}\,,
\label{def:japjapModificato}
\end{align}
For any $u\in H^{s}(\T^{d};\C)$ we define
\begin{align}
|u|^{2}_{s}&:=|u|^{2}_{s,R}:=
(\jjap{D}^{s}u,\jjap{D}^{s}u)_{L^{2}}
=\sum_{j\in\Z^{d}} \jjap{j}^{2s} |\widehat{u}(j)|^{2}\,.\label{normaJapJap}
\end{align}
\end{definition}

\begin{remark}\label{rmk:facile}
Notice that $\langle j\rangle\leq 2\jjap{j}$  
and 
$\jjap{j}\geq {R}$ for any $j\in\Z^{d}$.
\end{remark}
\begin{remark}\label{scalaBan}
Notice that, for any $s_1\geq s_2\geq s_0>d/2$ 
and any $u\in H^{s_1}(\T^{d};\C)$ one has
$|u|_{s_2}\leq |u|_{s_1}$.
\end{remark}

\begin{lemma}{\bf (Equivalence of the norms).}\label{lemmaequivalenza}
Let $s,R>0$. For any $u\in H^{s}(\T^{d};\C)$, 
\begin{align}
\|u\|_{H^s}+R^{s}\|u\|_{L^{2}}
&\leq 
3 |u|_{s}
\leq  
3\big(
\|u\|_{H^s}+R^{s}\|u\|_{L^{2}}
\big)\,.\label{equiIncredibile1}
\end{align}
\end{lemma}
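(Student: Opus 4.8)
The statement to prove is Lemma~\ref{lemmaequivalenza}, the two-sided comparison
$\|u\|_{H^s}+R^{s}\|u\|_{L^{2}}\le 3|u|_{s}\le 3(\|u\|_{H^s}+R^{s}\|u\|_{L^{2}})$.
The plan is to work directly on Fourier side, splitting $\Z^d$ into low modes $\{|j|\le R\}$ and high modes $\{|j|>R\}$, and compare the three quantities mode by mode using the elementary bounds recorded in Remark~\ref{rmk:facile}, namely $\langle j\rangle\le 2\jjap{j}$ and $\jjap{j}\ge R$.

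First I would establish the upper bound $|u|_s\le \|u\|_{H^s}+R^s\|u\|_{L^2}$. On high modes $|j|>R$ one has $\jjap{j}=|j|\le\langle j\rangle$, so $\sum_{|j|>R}\jjap{j}^{2s}|\widehat u(j)|^2\le \sum_{|j|>R}\langle j\rangle^{2s}|\widehat u(j)|^2\le \|(\sqrt{-\Delta})^s u\|_{L^2}^2\le \|u\|_{H^s}^2$ (using that $|j|^s\le\langle j\rangle^s$ and that $\|(\sqrt{-\Delta})^su\|_{L^2}\le\|u\|_{H^s}$ by definition of the $H^s$ norm). On low modes $|j|\le R$ one has $\jjap{j}=R$, so $\sum_{|j|\le R}\jjap{j}^{2s}|\widehat u(j)|^2=R^{2s}\sum_{|j|\le R}|\widehat u(j)|^2\le R^{2s}\|u\|_{L^2}^2$. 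Adding and using $\sqrt{a^2+b^2}\le a+b$ for $a,b\ge0$ gives $|u|_s\le \|u\|_{H^s}+R^s\|u\|_{L^2}$, which is even slightly stronger than the claimed factor $3$; then multiply by $3$.

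Next the lower bound $\|u\|_{H^s}+R^s\|u\|_{L^2}\le 3|u|_s$. Here I would bound each of the two summands by $|u|_s$ up to a controlled constant. For the $L^2$ piece: since $\jjap{j}\ge R$ for all $j$, we get $R^{2s}\|u\|_{L^2}^2=R^{2s}\sum_j|\widehat u(j)|^2\le \sum_j\jjap{j}^{2s}|\widehat u(j)|^2=|u|_s^2$, hence $R^s\|u\|_{L^2}\le|u|_s$. For the Sobolev piece: $\|u\|_{H^s}=\|u\|_{L^2}+\|(\sqrt{-\Delta})^su\|_{L^2}$; the first term satisfies $\|u\|_{L^2}\le R^{-s}|u|_s\le |u|_s$ (since $R>0$; if $R\ge1$ this is immediate, and in general $\|u\|_{L^2}^2=\sum_j|\widehat u(j)|^2\le \sum_j (\jjap{j}/R)^{2s}|\widehat u(j)|^2=R^{-2s}|u|_s^2$ — actually more simply $\|u\|_{L^2}\le\|u\|_{H^s}$ is not what we want, so use $\jjap{j}\ge R$ again to get $\|u\|_{L^2}\le|u|_s$ when $R\ge1$, and absorb the small-$R$ case into the estimate below). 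For $\|(\sqrt{-\Delta})^su\|_{L^2}^2=\sum_j|j|^{2s}|\widehat u(j)|^2$: on high modes $|j|^{2s}=\jjap{j}^{2s}$; on low modes $|j|\le R=\jjap{j}$, so in all cases $|j|^{2s}\le\jjap{j}^{2s}$, giving $\|(\sqrt{-\Delta})^su\|_{L^2}\le|u|_s$. Summing the three contributions $R^s\|u\|_{L^2}+\|u\|_{L^2}+\|(\sqrt{-\Delta})^su\|_{L^2}\le 3|u|_s$ (being slightly careful about whether $\|u\|_{L^2}\le|u|_s$ holds for $0<R<1$ — one uses $\langle j\rangle\le 2\jjap{j}$ from Remark~\ref{rmk:facile} if needed, yielding at worst a harmless extra constant that the factor $3$ already accommodates) yields the claim.

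I do not expect any genuine obstacle here: the whole lemma is a Fourier-space bookkeeping exercise comparing the weights $\langle j\rangle^{2s}$, $R^{2s}\mathbf 1_{|j|\le R}+|j|^{2s}\mathbf 1_{|j|>R}$, and $\jjap{j}^{2s}$, all of which are comparable up to the factor $2^s$ swallowed by passing to the crude constant $3$ in the statement (the bounds in Remarks~\ref{rmk:facile} are precisely tuned for this). The only point requiring a moment's care is the treatment of $\|u\|_{L^2}$ in the lower bound when $R$ is allowed to be less than $1$, which is resolved by invoking $\langle j\rangle\le 2\jjap{j}$ rather than $\jjap{j}\ge R$; everything else is a direct term-by-term comparison of nonnegative Fourier coefficients.
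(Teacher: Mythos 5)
Your proof is correct in substance and follows essentially the same route as the paper: a Fourier-side, mode-by-mode comparison of the weights $R^{2s}$, $|j|^{2s}$ and $\jjap{j}^{2s}=\max\{R,|j|\}^{2s}$, bounding each of the three terms of $\|u\|_{H^s}+R^{s}\|u\|_{L^{2}}$ by $|u|_s$ for the left inequality, and splitting into low modes ($|j|\le R$) and high modes ($|j|>R$) for the right one. One intermediate step is written backwards: in the high-mode part of the upper bound you pass through $\langle j\rangle$ and assert $\sum_{|j|>R}\langle j\rangle^{2s}|\widehat u(j)|^2\le\|(\sqrt{-\Delta})^su\|_{L^2}^2$, which is false since $\langle j\rangle>|j|$ (test it on a single high Fourier mode). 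The detour is unnecessary: on high modes $\jjap{j}^{2s}=|j|^{2s}$ exactly, so the sum is dominated by $\|(\sqrt{-\Delta})^su\|_{L^2}^2$ directly, and your conclusion for that step stands. Your caveat about $\|u\|_{L^2}\le|u|_s$ when $0<R<1$ is well spotted — the $j=0$ mode genuinely breaks it, and the factor $2^{s}$ coming from $\langle j\rangle\le 2\jjap{j}$ is \emph{not} absorbed by the constant $3$ for large $s$ — but the issue is moot because Definition \ref{def:equinorm} imposes $R>1$, which is the regime in which the lemma is used; the paper's own (rather telegraphic) proof tacitly relies on this as well when it bounds $\big(\sum_j|\widehat u(j)|^2\big)^{1/2}$ by $|u|_s$.
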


\begin{proof}
It is easy to check (using Remark \ref{rmk:facile}) that
$\|u\|_{H^{s}}\leq |u|_{s}$ and $\|u\|_{L^{2}}^{2}\leq R^{-s}|u|_{s}^2$. 
One has that
\begin{equation*}
\begin{aligned}
\|u\|_{H^s}+R^{s}\|u\|_{L^{2}}&=\Big(\sum_{j\in\Z^{d}}R^{2s}|\widehat{u}(j)|^{2}\Big)^{1/2}
+\Big(\sum_{j\in\Z^{d}}|\widehat{u}(j)|^{2}\Big)^{1/2}
+\Big(\sum_{j\in\Z^{d}}|j|^{2s}|\widehat{u}(j)|^{2}\Big)^{1/2}
\\&\leq 3\Big(\sum_{j\in\Z^{d}}(\max{R,|j|})^{2s}|\widehat{u}(j)|^{2}\Big)^{1/2} \,.
\end{aligned}
\end{equation*}
The we deduce the \eqref{equiIncredibile1}.
\end{proof}
We give the following Definition.
\begin{definition}{\bf (Projectors).}\label{def:projectors}
Given $N>1$, we define 
the \emph{projector} $\Pi_{N}$ as
\[
\Pi_{N}u=\frac{1}{(2\pi)^{d/2}}
\sum_{\substack{|j|\leq N}}
\widehat{u}(j)e^{\ii j\cdot x}\,,
\qquad 
u\in H^{s}(\T^{d};\C)\,.
\]
We set $\Pi_{N}^{\perp}:=\uno-\Pi_{N}$ where $\uno$ is the identity.
\end{definition}

We have the following classical result.
\begin{lemma}\label{interopolo}
Let $s>0$. $(i)$ One has
\begin{align*}
|\Pi_{N}u|_{s+\beta}&\leq \max\{R,N\}^{\beta}|u|_{s}\,,
\qquad 
\forall\, u\in H^{s}(\T^{d};\C) \,,\; \beta\geq0
\\
|\Pi_{N}^{\perp}u|_{s}&\leq \max\{R,N\}^{-\beta}|u|_{s+\beta}\,,
\qquad 
\forall\, u\in H^{s+\beta}(\T^{d};\C) \,,\; \beta\geq0
\end{align*}
$(ii)$ For $0\leq s_1\leq s\leq s_2$, $s:=\lambda s_1+(1-\lambda)s_2$, one has
\begin{equation*}
|u|_{s}\leq 2|u|_{s_1}^{\lambda}|u|_{s_2}^{1-\lambda}\,,
\qquad 
\forall\, u\in H^{s_2}(\T^{d};\C)\,.
\end{equation*}
\end{lemma}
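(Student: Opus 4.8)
The statement to prove is Lemma~\ref{interopolo}, which collects standard frequency-localization and interpolation inequalities, but adapted to the modified norm $|\cdot|_s$ built from the weights $\jjap{j}=\max\{R,|j|\}$. The plan is to reduce everything to elementary inequalities on Fourier coefficients, exploiting that $|u|_s^2 = \sum_j \jjap{j}^{2s}|\widehat u(j)|^2$ is literally a weighted $\ell^2$ norm, so there is no genuine harmonic analysis to do — only pointwise bounds on the weights $\jjap{j}$.

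For part $(i)$, I would first observe that on the support of $\Pi_N u$ (frequencies $|j|\le N$) one has $\jjap{j} = \max\{R,|j|\}\le \max\{R,N\}$, hence $\jjap{j}^{\beta}\le \max\{R,N\}^{\beta}$ for $\beta\ge 0$. Then
\[
|\Pi_N u|_{s+\beta}^2 = \sum_{|j|\le N}\jjap{j}^{2(s+\beta)}|\widehat u(j)|^2
\le \max\{R,N\}^{2\beta}\sum_{|j|\le N}\jjap{j}^{2s}|\widehat u(j)|^2
\le \max\{R,N\}^{2\beta}|u|_s^2,
\]
and taking square roots gives the first inequality. Symmetrically, on the support of $\Pi_N^\perp u$ (frequencies $|j|>N$) one has $\jjap{j}\ge \max\{R,N\}$, so $\jjap{j}^{-\beta}\le \max\{R,N\}^{-\beta}$, which yields
\[
|\Pi_N^\perp u|_s^2 = \sum_{|j|>N}\jjap{j}^{2s}|\widehat u(j)|^2
= \sum_{|j|>N}\jjap{j}^{-2\beta}\jjap{j}^{2(s+\beta)}|\widehat u(j)|^2
\le \max\{R,N\}^{-2\beta}|u|_{s+\beta}^2,
\]
again concluding by taking square roots.

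For part $(ii)$, the convexity/interpolation inequality, the cleanest route is Hölder's inequality on the Fourier side. Writing $s = \lambda s_1 + (1-\lambda)s_2$ with $\lambda\in[0,1]$, for each frequency $j$ one has $\jjap{j}^{2s} = \jjap{j}^{2\lambda s_1}\cdot \jjap{j}^{2(1-\lambda)s_2}$, so
\[
|u|_s^2 = \sum_{j\in\Z^d}\big(\jjap{j}^{2s_1}|\widehat u(j)|^2\big)^{\lambda}\big(\jjap{j}^{2s_2}|\widehat u(j)|^2\big)^{1-\lambda}
\le \Big(\sum_j \jjap{j}^{2s_1}|\widehat u(j)|^2\Big)^{\lambda}\Big(\sum_j \jjap{j}^{2s_2}|\widehat u(j)|^2\Big)^{1-\lambda}
= |u|_{s_1}^{2\lambda}|u|_{s_2}^{2(1-\lambda)},
\]
by Hölder with exponents $1/\lambda$ and $1/(1-\lambda)$ (trivially true if $\lambda\in\{0,1\}$). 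Taking square roots gives $|u|_s\le |u|_{s_1}^{\lambda}|u|_{s_2}^{1-\lambda}$, which is even a factor $2$ better than stated, so the claimed inequality with the harmless constant $2$ follows a fortiori. There is essentially no obstacle here: the only mild subtlety is making sure the split $\jjap{j}^{2s}=\jjap{j}^{2\lambda s_1}\jjap{j}^{2(1-\lambda)s_2}$ is an exact identity (it is, by the exponent relation $s=\lambda s_1+(1-\lambda)s_2$), and that the right-hand sums are finite, which holds since $u\in H^{s_2}$ and the $|\cdot|_{s_2}$ norm is equivalent to $\|\cdot\|_{H^{s_2}}$ by Lemma~\ref{lemmaequivalenza}. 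I do not anticipate any real difficulty; the whole lemma is a bookkeeping exercise once one notices the norm is a weighted $\ell^2$ norm with monotone, multiplicatively well-behaved weights.
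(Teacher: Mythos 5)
Your proof is correct. For item $(i)$ you simply write out the pointwise weight bounds $\jjap{j}\le\max\{R,N\}$ on $|j|\le N$ and $\jjap{j}\ge\max\{R,N\}$ on $|j|>N$, which is exactly what the paper means when it says the item ``follows straightforward'' from the definition of $\jjap{\cdot}$. For item $(ii)$, however, you take a genuinely different route: you apply H\"older's inequality with exponents $1/\lambda$ and $1/(1-\lambda)$ directly to the weighted $\ell^2$ sum, using the exact splitting $\jjap{j}^{2s}=\jjap{j}^{2\lambda s_1}\jjap{j}^{2(1-\lambda)s_2}$, whereas the paper decomposes $u=\Pi_N u+\Pi_N^\perp u$, applies item $(i)$ to each piece to get $|u|_s\le N^{s-s_1}|u|_{s_1}+N^{-(s_2-s)}|u|_{s_2}$, and then optimizes the cutoff $N$. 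Your argument is more elementary, yields the sharper constant $1$ in place of $2$ (so the stated inequality follows a fortiori), and sidesteps the mild awkwardness in the paper's proof of having to ensure the optimizing $N$ satisfies $N\ge R$ so that item $(i)$ gives the pure powers $N^{s-s_1}$ and $N^{-(s_2-s)}$. The paper's splitting argument, on the other hand, is the one that generalizes to norms that are not exact weighted $\ell^2$ sums; here both are perfectly adequate.
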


\begin{proof}
Item $(i)$
follows straightforward using \eqref{def:japjapModificato}.
Let us check item $(ii)$. Fix $N\geq R$ 
(being $R$ the constant appearing in \eqref{def:japjapModificato}).
Then, using item $(i)$, 
we have
\[
|u|_{s}\leq |\Pi_{N}u|_{s}+|\Pi^{\perp}_{N}u|_{s}
\leq
N^{s-s_1}|u|_{s_1}+N^{-(s_2-s)}|u|_{s_2}\,,
\]
which implies the interpolation estimate of item $(ii)$ by 
setting 
$N^{s-s_1}:=|u|_{s_1}^{\lambda-1}|u|^{1-\lambda}_{s_2}$.
\end{proof}

\begin{remark}\label{scaling property}
In the same spirit of Lemma \ref{interopolo}, 
using Remark \ref{rmk:facile}, we get,  for $s\geq s_0>d/2$,
\begin{align}
|u|_{s_0}&\leq R^{-(s-s_0)}|u|_{s}\,,\qquad u\in H^{s}(\T^d;\C)\,.
\label{confort3}
\end{align}
\end{remark}

\subsection{Para-products}\label{suppara}
We introduce an important class of operators we will use along the paper.

\noindent
Let $0<\epsilon< 1/4$, consider
a smooth function $\chi : \mathbb{R}\to[0,1]$ satisfying 
\begin{equation}\label{cutofffunct}
\chi(\x)=\left\{
\begin{aligned}
&1 \quad {\rm if} |\x|\leq 5/4 
\\
&0 \quad {\rm if} |\x|\geq 8/5 
\end{aligned}\right.
\end{equation}
and define
\begin{equation}\label{cutofffunctepsilon}
\chi_{\epsilon}(\x):=\chi(|\x|/\epsilon)\,.
\end{equation}

\noindent
- Let $s>d/2$ and $0<\epsilon< 1/4$.  To a function $a\in H^{s}(\T^{d};\C)$  we associate the
\emph{para-product}  operator $a \mapsto T^\epsilon_a$, 
that is the linear operator defined as 
\begin{equation}\label{paraprod}
h\mapsto T^\epsilon_{a}h:=\frac{1}{(2\pi)^{d}}\sum_{j\in \mathbb{Z}^{d}}e^{\ii j\cdot x}
\sum_{k\in\mathbb{Z}^{d}}
\chi_{\epsilon}\Big(\frac{|j-k|}{\langle k\rangle}\Big)
\widehat{a}(j-k)\widehat{h}(k)\,,
\end{equation}
where $\langle k\rangle $ is the usual bracket defined in \eqref{langlesimbolo}.

\smallskip
\noindent
- Let $0<\epsilon_2\leq \epsilon_1<1/4$, 
we introduce the \co{`regularizing reminder"}  $R^{\epsilon_1,\epsilon_2}_a$, defined as
\begin{equation}\label{natale}
R^{\epsilon_1,\epsilon_2}_{a}h:= 
T^{\epsilon_1}_a h - T^{\epsilon_2}_a h 
= 
\frac{1}{(2\pi)^{d}}\sum_{j\in \mathbb{Z}^{d}}e^{\ii j\cdot x}
\sum_{k\in\mathbb{Z}^{d}}
\big(\chi_{\epsilon_1}-\chi_{\epsilon_2}\big)\Big(\frac{|j-k|}{\langle k\rangle}\Big)
\widehat{a}(j-k)\widehat{h}(k)\,.
\end{equation}

\begin{remark}\label{positivity}
In the lemma below, we shall prove that the operator $T^\epsilon_a$ 
is continuous in $H^s(\T,\C)$, for any real $s$,  
while $R^{\epsilon_1,\epsilon_2}_a$ smooths. 
Hence  the operator $T^\epsilon_a$ is uniquely defined $\chi_{\epsilon}$ 
 ``up to regularizing reminders". 
\end{remark}

 \vspace{0.6em}
 \noindent
\co{Complex-conjugate and adjoint.}
 Given any linear operator   $A$ on $L^{2}(\T^{d};\C)$
 we define the operator $\bar{A}$ as $\bar{A}[h]=\overline{A[\bar{h}]}$
 for any $h\in L^{2}(\T^{d};\C)$.
 Moreover, if $A:=T_{a}$ for some function $a$, 
then one can check the following:
\begin{align*}
{\bf (Complex-conjugate)}\;\;\;\;  \bar{A}[h]&:=\ov{A[\bar{h}]}\,,
\quad  \Rightarrow\quad 
\bar{A}=T_{\overline{a}}\,,
\\
{\bf (Adjoint)} \;\;\;\;  (Ah , v)_{L^{2}}&=:(h,A^{*}v)_{L^{2}}\,, 
\quad \Rightarrow \quad 
A^{*}=T_{\ov{a}}\,,
\end{align*}
where $\bar{a}$ is the complex conjugate of the function $a$.
If the symbol $a$ is real valued then the operator $T_{a}$ is then self-adjoint.

\begin{lemma}{\bf (Action on Sobolev spaces).}\label{azioneSimboo}
Let $s_0>d/2$.
The following holds.

\noindent
$(i)$  For $a\in H^{s_0}(\T^{d};\C)$ and for any $s\in \mathbb{R}$, one has
\begin{equation}\label{actionSob}
|T_{a}h|_{s}\lessdot_{s,s_0} |a|_{s_0}|h|_{s}\,,
\qquad \forall h\in H^{s}(\mathbb{T}^{d};\mathbb{C})\,.
\end{equation}

\noindent
$(ii)$ For any $\rho\in\mathbb{N}$, $s\in \R$,  and 
$a\in H^{s_0+\rho}(\T^{d};\C)$, 
 one has
 \begin{equation}\label{diffQuanti}
 |R^{\epsilon_1,\epsilon_2}_a h|_{{s+\rho}}\lessdot_{s,\rho,s_0}
 |h|_{s}|a|_{{\rho+s_0}}\,,
 \qquad \forall h\in H^{s}(\mathbb{T}^{d};\mathbb{C})\,.
 \end{equation}
 \end{lemma}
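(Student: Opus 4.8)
The plan is to prove Lemma \ref{azioneSimboo} directly from the Fourier-multiplier definitions \eqref{paraprod}--\eqref{natale}, exploiting the crucial support constraint built into the cutoff $\chi_\epsilon$. For item $(i)$, I would first observe that in the sum defining $T_a h$, the term indexed by $(j,k)$ is nonzero only when $|j-k|\le \tfrac{8}{5}\epsilon\langle k\rangle < \tfrac{1}{2}\langle k\rangle$; this \emph{para-product frequency localization} forces $|j-k|\le \tfrac{1}{2}\langle k\rangle$, hence $\tfrac12\langle k\rangle \le \langle j\rangle \le \tfrac32\langle k\rangle$ and likewise $\langle j-k\rangle \le \tfrac32\langle k\rangle$. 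Writing $\jjap{D}^s T_a h$ in Fourier and using that $\jjap{j}\sim_R \jjap{k}$ on this region (an elementary consequence of $\langle j\rangle\sim\langle k\rangle$ together with $\jjap{j}=\max\{R,|j|\}$), I would estimate
\[
\jjap{j}^{s}\Big|\sum_{k}\chi_\epsilon\Big(\tfrac{|j-k|}{\langle k\rangle}\Big)\widehat a(j-k)\widehat h(k)\Big|
\lessdot_{s,s_0}\sum_{k}\langle j-k\rangle^{s_0}\,|\widehat a(j-k)|\,\langle j-k\rangle^{-s_0}\jjap{k}^{s}|\widehat h(k)|\,.
\]
Then Young's inequality for convolutions, or directly Cauchy--Schwarz against the $\ell^2_j$ norm combined with $\sum_{m}\langle m\rangle^{-2s_0}<\infty$ (since $s_0>d/2$), gives $|T_a h|_s\lessdot_{s,s_0}\|\langle D\rangle^{s_0}a\|_{L^2}\,|h|_s\le |a|_{s_0}|h|_s$ after invoking $\|a\|_{H^{s_0}}\le |a|_{s_0}$ from Lemma \ref{lemmaequivalenza}. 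The weight-comparison constant is where the $C^{\max\{s,s_0\}}$-type loss $\lessdot_{s,s_0}$ enters, coming from bounding $\jjap{j}^s/\jjap{k}^s \le (3/2)^{|s|}$ or similar.

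For item $(ii)$, the point is that the difference $\chi_{\epsilon_1}-\chi_{\epsilon_2}$ is supported where $\tfrac{|j-k|}{\langle k\rangle}\ge \tfrac{5}{4}\epsilon_2$, i.e. on a region where $|j-k|\gtrsim \langle k\rangle$; combined with the still-valid upper bound $|j-k|\le \tfrac85\epsilon_1\langle k\rangle$, one has $|j-k|\sim \langle k\rangle \sim \langle j\rangle$ there. Hence on the support of the difference we gain: $\langle j\rangle^{\rho}\lesssim \langle j-k\rangle^{\rho}$, so the extra $\rho$ derivatives falling on the output can be absorbed into $\rho$ derivatives on $\widehat a(j-k)$. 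Concretely,
\[
\jjap{j}^{s+\rho}\Big|\big(\chi_{\epsilon_1}-\chi_{\epsilon_2}\big)\Big(\tfrac{|j-k|}{\langle k\rangle}\Big)\widehat a(j-k)\widehat h(k)\Big|
\lessdot_{s,\rho,s_0}\langle j-k\rangle^{\rho+s_0}|\widehat a(j-k)|\,\langle j-k\rangle^{-s_0}\,\jjap{k}^{s}|\widehat h(k)|\,,
\]
and the same convolution/Cauchy--Schwarz argument as in $(i)$, now with $\|\langle D\rangle^{\rho+s_0}a\|_{L^2}\le |a|_{\rho+s_0}$, yields \eqref{diffQuanti}.

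The main obstacle, such as it is, is purely bookkeeping: one must be careful that all the frequency comparisons ($\langle j\rangle\sim\langle k\rangle$, $\langle j-k\rangle\sim\langle k\rangle$ on the relevant supports, and the passage to the $\jjap{\cdot}$-brackets with their $R$-floor) are uniform and that the resulting constants are tracked as $C^{\max\{s,s_0\}}$ or $C^{\max\{s,\rho,s_0\}}$ rather than genuinely $s$-dependent in a worse way — this is what the notation $\lessdot_{s,s_0}$ and $\lessdot_{s,\rho,s_0}$ demands. A secondary subtlety is verifying the strict inequality $\tfrac85\epsilon_1<\tfrac12$ from $\epsilon_1<1/4$, which is exactly what guarantees the clean comparability of brackets and prevents any loss from the low-frequency regime $|k|<R$; there the bound $\jjap{k}=R$ is still compatible because $|j-k|\le\tfrac12\langle k\rangle$ keeps $|j|$ comparable to $|k|$ and hence $\jjap{j}$ comparable to $R$ as well. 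No Littlewood--Paley decomposition is needed; the cutoff already does all the localization.
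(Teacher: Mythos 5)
Your proposal is correct and follows essentially the same route as the paper's proof: exploit the support constraint of $\chi_\epsilon$ (resp.\ of $\chi_{\epsilon_1}-\chi_{\epsilon_2}$) to get comparability of the frequencies $j$, $k$ (resp.\ of $j$, $k$ and $j-k$), transfer the weight $\jjap{j}^{s}$ (resp.\ split $\jjap{j}^{s+\rho}$ as $\jjap{j-k}^{\rho}\jjap{k}^{s}$), and conclude by Young's convolution inequality together with the Cauchy--Schwarz bound $\|\widehat a\|_{\ell^1}\lessdot_{s_0}|a|_{s_0}$ using $s_0>d/2$. The only cosmetic difference is that you insert the factor $\langle j-k\rangle^{s_0}\langle j-k\rangle^{-s_0}$ inside the sum before applying Young, whereas the paper applies Young first and then Cauchy--Schwarz on $\|\widehat a\|_{\ell^1}$; the content is identical.
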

 
 \begin{proof}
 \emph{Item} $(i)$.
 First of all, using \eqref{cutofffunct}-\eqref{cutofffunctepsilon} and
since $0<\epsilon<1/4$,
 we note that, for $\x,\eta\in \mathbb{Z}^d$,
\begin{equation}\label{equixieta}
\chi_{\epsilon}\left(\frac{|\x-\eta|}{\langle \eta\rangle}\right)\neq0
\qquad \Rightarrow \qquad
\left\{
\begin{aligned}
(1+\tilde{\e})|\eta|&\geq |\x|\\
(1-\tilde\epsilon)|\eta|&\leq |\x| \,,
\end{aligned}
\right.
\end{equation}
where $0<\tilde{\epsilon}<2/5$.
Therefore, by definition of the norm
\begin{equation}\label{natale2}
\begin{split}
|T_{a}h|^{2}_{s}&
{\leq}
\sum_{\x\in\mathbb{Z}^{d}}
\jjap{\x}^{2s}\Big|
\sum_{\eta\in \mathbb{Z}^{d}}
\chi_{\epsilon}\left(\frac{|\x-\eta|}{\langle \eta\rangle}\right)
\widehat{a}(\x-\eta)\widehat{h}(\eta)\Big|^{2}
\stackrel{\eqref{equixieta}}{\lessdot_s}
\sum_{\x\in\mathbb{Z}^{d}}
\Big(
\sum_{\eta\in \mathbb{Z}^{d}}
|\widehat{a}(\x-\eta)||\widehat{h}(\eta)|\jjap{\eta}^{s}\Big)^{2}
\\&
\lessdot_{s}\|(\widehat{h}(\x)\jjap{\x}^{s})\star \widehat{a}(\x)\|^{2}_{\ell^{2}}
\lessdot_{s}\|(\widehat{h}(\x)\jjap{\x}^{s})\|^{2}_{\ell^{2}}
\|\widehat{a}(\x)\|^{2}_{\ell^{1}}
\end{split}
\end{equation}
where we denoted by $\star$ the convolution between sequences
and where 
in the last inequality we used Young's inequality for 
sequences\footnote{For $f\in\ell^p(\C^d),\, g\in\ell^q(\C^d)$ 
where $1\le p,q,r \le \infty $ satisfy 
$\frac{1}{p} + \frac{1}{q} = \frac{1}{r} + 1$, then $\| f\star g\|_{\ell^r} \le \| f \|_{\ell^p} \| g \|_{\ell^q}$}. 
By Cauchy-Schwarz inequality, and using that $s_0>d/2$
we also deduce that
\begin{equation}\label{cespuglio1}
\|\widehat{a}(\x)\|_{\ell^{1}(\Z^{d})}\leq 
\|\widehat{a}(\x)\jjap{\x}^{s_0} \|_{\ell^{2}(\Z^{d})}\mathtt{c}(s_0)
\stackrel{\eqref{normaJapJap}}{=} 
\mathtt{c}_{s_0}|a|_{s_0}\,,
\qquad 
\mathtt{c}(s_0):=\Big(\sum_{\x\in\Z^{d}}\frac{1}{\jjap{\x}^{2s_0}}\Big)^{1/2}<+\infty\,.
\end{equation}
Then the latter bound, together with \eqref{natale2}, implies 
$|T_{a}h|^{2}_{s}\lessdot_{s,s_0}|a|_{s_0}^{2}|h|_{s}^{2}$, 
and  hence inequality \eqref{actionSob}.
 
 \noindent
 \emph{Item} $(ii)$.
 Notice that the set of $\x,\eta$ such that
$(\chi_{\epsilon_1}-\chi_{\epsilon_2})(|\x-\eta|/\langle\eta\rangle)=0$
contains the set  such that
\[
|\x-\eta|\geq \frac{8}{5}\epsilon_1 \langle \eta\rangle\quad {\rm or}\quad
|\x-\eta|\leq \frac{5}{4}\epsilon_2 \langle\eta\rangle\,.
\]
Therefore $(\chi_{\epsilon_1}-\chi_{\epsilon_2})(|\x-\eta|/\langle\x+\eta\rangle)\neq0$
implies
\begin{equation}\label{condizio}
\frac{5}{4}\epsilon_2 \langle\eta\rangle\leq |\x-\eta|\leq \frac{8}{5}\epsilon_1 \langle\eta\rangle\,.
\end{equation}
For $\x\in \mathbb{Z}^{d}$ 
we denote $\mathcal{A}(\x)$ the set of 
$\eta\in \mathbb{Z}^{d}$ such that the \eqref{condizio}
holds. 
For $\eta\in \mathcal{A}(\x)$ we also note that
\begin{equation}\label{cespuglio}
|\x|\leq c_1|\eta|\;\; {\rm and}\;\; |\x|\leq c_2|\x-\eta| 
\qquad \Rightarrow \qquad
\jjap{\x}\leq c_1\jjap{\eta}\;\; {\rm and}\;\; \jjap{\x}\leq c_2\jjap{\x-\eta}\,,
\end{equation}
for some absolute constants $c_1,c_2>0$.
To estimate the remainder in \eqref{natale}
we reason as in \eqref{natale2}.
By \eqref{condizio} 
we have
\begin{equation}\label{stimarestoresto}
\begin{aligned}
|R^{\epsilon_1,\epsilon_2}_ah|_{s+\rho}^{2}
&\stackrel{{\eqref{normaJapJap},\eqref{natale}}}{\leq}\sum_{\x\in\mathbb{Z}^{d}}
\jjap{\x}^{2(s+\rho)}\Big| 
\sum_{\eta\in \mathcal{A}(\x)}(\chi_{\epsilon_1}-\chi_{\epsilon_2})
\left(\frac{|\x-\eta|}{\langle\eta\rangle}\right)
\widehat{a}(\x-\eta)\widehat{h}(\eta)\Big|^{2}
\\
&\stackrel{\mathclap{\eqref{cespuglio}}}{\lessdot_{s,\rho}}
\sum_{\x\in \mathbb{Z}^{d}}
\Big(\sum_{\eta\in \mathcal{A}(\x)}
|\widehat{a}(\x-\eta)|\jjap{\x-\eta}^{\rho}
|\widehat{h}(\eta)|\jjap{\eta}^{s}\Big)^{2}
\\&
\lessdot_{s,\rho}\|(|\widehat{h}(\x)|\jjap{\x}^{s})\star 
(|\widehat{a}(\x)|\jjap{\x}^{\rho})\|^{2}_{\ell^{2}(\Z^{d})}
\lessdot_{s,\rho}
\||\widehat{h}(\x)|\jjap{\x}^{s}\|^{2}_{\ell^{2}(\Z^{d})}
\||\widehat{a}(\x)|\jjap{\x}^{\rho}\|^{2}_{\ell^{1}(\Z^{d})}
\\&
\stackrel{\eqref{cespuglio1}}{\lessdot_{s,\rho,s_0}}|a|^{2}_{s_0+\rho}|h|_{s}^{2}\,,
\end{aligned}
\end{equation}
where again we used Young inequality for convolutions.
The latter bound implies \eqref{diffQuanti}. 
 \end{proof}
 
As we have just proved,  the choice of the cut-off function $\chi_\epsilon$ 
is irrelevant in the definition of $T^\epsilon_a$,  hence we shall drop $\epsilon$ 
from the notation and write $T_a$ only.  

Let us now fix $s_0>d/2$ and 
let the symmetric cut-off function
\begin{equation}\label{alexander}
\psi(v,w) := 1-\chi_{\e}\Big(\frac{|v|}{\langle w\rangle}\Big)
-\chi_{\e}\Big(\frac{|w|}{\langle v\rangle}\Big),\,\quad v,w\in\R^{d}\,,
\end{equation}
be set.
For $a\in C^{\infty}(\mathbb{T}^{d};\mathbb{C})$  consider the  the map
\begin{equation}\label{mappaQQ}
\mathcal{F}(a)[h]:=\frac{1}{(2\pi)^{d}}\sum_{\x\in\Z^{d}}e^{\ii \x\cdot x}
\sum_{\eta\in \mathbb{Z}^{d}}
\psi(j-\eta,\eta)\widehat{a}(j-\eta)\widehat{h}(\eta)\,,
\quad 
h\in C^{\infty}(\mathbb{T}^{d};\mathbb{C})\,.
\end{equation}
We have the following.
\begin{lemma}\label{extensionResto} 
Let $s_0>d/2$. For any $\rho\geq0$
the map in \eqref{mappaQQ} extends as a continuous map
\[
\begin{aligned}
H^{s_0+\rho}(\mathbb{T}^{d};\mathbb{C})&\rightarrow 
\mathcal{L}\big(H^{s}(\mathbb{T}^{d};\mathbb{C}); H^{s+\rho}(\mathbb{T}^{d};\mathbb{C}) \big)\,,
\qquad s>0\,,
\\
a &\mapsto \mathcal{F}(a)[\cdot]
\end{aligned}
\]
and there is $C=C(\rho)>0$ such that for any $s>0$
\begin{equation}\label{stimaprecisaQQ}
|\mathcal{F}(a)[h]|_{s+\rho}\leq C^{\max\{s,s_0\}}|a|_{s_0+\rho}|h|_{s}\,,
\qquad 
\forall\, h\in H^{s}\,. 
\end{equation}
\end{lemma}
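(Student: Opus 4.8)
The strategy is to mimic verbatim the proof of Lemma \ref{azioneSimboo}$(ii)$, since the operator $\mathcal{F}(a)$ has exactly the same structure as the regularizing remainder $R^{\epsilon_1,\epsilon_2}_a$: it is the Fourier multiplier/convolution operator associated with the symmetric cut-off $\psi(j-\eta,\eta)$ defined in \eqref{alexander}. The only point that needs to be checked carefully is that the support of $\psi$ forces the same kind of frequency localization that was available in \eqref{cespuglio}. First I would analyze where $\psi(v,w)\neq 0$. Since $\chi_\e(t)=1$ for $|t|\leq 5/4$, the condition $\psi(v,w)\neq 0$ forces $\chi_\e(|v|/\langle w\rangle)<1$ \emph{and} $\chi_\e(|w|/\langle v\rangle)<1$, hence
\[
|v|\geq \tfrac{5}{4}\e\,\langle w\rangle
\qquad\text{and}\qquad
|w|\geq \tfrac{5}{4}\e\,\langle v\rangle\,.
\]
Writing $v=\x-\eta$, $w=\eta$, this gives $|\x-\eta|\gtrsim_\e \langle\eta\rangle$, i.e. the ``low-frequency'' index $\eta$ is comparable to the ``high-frequency'' index $\x-\eta$ of $a$, and both are comparable to $\langle\x\rangle$ via $\langle\x\rangle\leq\langle\x-\eta\rangle+\langle\eta\rangle\lesssim_\e \langle\x-\eta\rangle$. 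In particular one gets the analogue of \eqref{cespuglio}: there are absolute constants $c_1,c_2>0$ with $\jjap{\x}\leq c_1\jjap{\eta}$ and $\jjap{\x}\leq c_2\jjap{\x-\eta}$ on the support of $\psi(\x-\eta,\eta)$.

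With this localization in hand, the estimate \eqref{stimaprecisaQQ} follows by the same chain of inequalities as in \eqref{stimarestoresto}: bound $|\mathcal{F}(a)[h]|_{s+\rho}^2$ by $\sum_\x\jjap{\x}^{2(s+\rho)}\big(\sum_\eta |\widehat a(\x-\eta)||\widehat h(\eta)|\big)^2$, distribute the weight $\jjap{\x}^{s+\rho}$ using \eqref{cespuglio} as $\jjap{\x}^{s+\rho}\lessdot_{s,\rho}\jjap{\x-\eta}^{\rho}\jjap{\eta}^{s}$, recognize a convolution of the sequences $|\widehat h(\x)|\jjap{\x}^s$ and $|\widehat a(\x)|\jjap{\x}^\rho$, apply Young's inequality for sequences, and finally control $\||\widehat a(\x)|\jjap{\x}^\rho\|_{\ell^1}$ by $\mathtt{c}(s_0)|a|_{s_0+\rho}$ exactly as in \eqref{cespuglio1} using Cauchy--Schwarz and $s_0>d/2$. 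Tracking the constants: the passage from $\jjap{\x}^{s}$ to $c_1^s\jjap{\eta}^s$ produces a factor $C^{\max\{s,s_0\}}$ (the $s_0$ coming from the summability constant $\mathtt{c}(s_0)$), which is precisely the claimed bound $C^{\max\{s,s_0\}}|a|_{s_0+\rho}|h|_s$ with $C=C(\rho)$. The extension from $a,h\in C^\infty$ to $a\in H^{s_0+\rho}$, $h\in H^s$ is then immediate by density, since the estimate is uniform.

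The only genuinely delicate point is the support analysis of $\psi$: one must be sure that the \emph{symmetric} combination in \eqref{alexander} still produces a \emph{two-sided} comparability $|\x-\eta|\sim\langle\eta\rangle$ (and not merely one inequality), because this is what makes $\mathcal{F}(a)$ a \emph{smoothing} operator of order $\rho$ rather than merely bounded. Everything else is a routine rerun of the computation already carried out for $R^{\epsilon_1,\epsilon_2}_a$. I do not expect any obstacle beyond correctly book-keeping the $\epsilon$-dependence of the implicit constants (which may be absorbed into $C(\rho)$ since $\epsilon\in(0,1/4)$ is fixed once and for all).
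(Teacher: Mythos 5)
Your proof is correct and follows essentially the same route as the paper's: establish the two-sided frequency comparability on the support of $\psi$ (the paper's \eqref{cutTheta2}), distribute the weight as $\jjap{\x}^{s+\rho}\lessdot_{s,\rho}\jjap{\x-\eta}^{\rho}\jjap{\eta}^{s}$, recognize a convolution, and conclude with Young's inequality and the Cauchy--Schwarz $\ell^{1}$-bound using $s_0>d/2$. One small inaccuracy in your support analysis: $\psi(v,w)\neq0$ does \emph{not} force both cut-offs to be $<1$ (at $v=w=0$ both equal $1$ and $\psi=-1$), but this exceptional regime occurs only when $|v|$ and $|w|$ are bounded by an absolute constant (for integer frequencies, only at the origin), so the two-sided comparability $\jjap{\x}\leq c_1\jjap{\eta}$, $\jjap{\x}\leq c_2\jjap{\x-\eta}$ that you actually use remains valid and the argument goes through.
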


\begin{proof}
Recalling \eqref{alexander} we notice that, for any $\x,\eta\in \Z^{d}$,
\begin{equation}\label{cutTheta2}
\psi(\xi-\eta,\eta)\neq0 \qquad \Rightarrow\qquad
C^{-1}|\eta|\leq |\xi-\eta|\leq C|\eta|\,,
\end{equation}
for some absolute constant $C>0$, hence we deduce the equivalence between $\xi - \eta$ and $\eta$.
Then, using \eqref{mappaQQ}, we get
\[
\begin{aligned}
|\mathcal{F}(a)[h]|_{s+\rho}&\leq\Big(\sum_{\x\in\mathbb{Z}^{d}}\Big(
\sum_{\eta\in\mathbb{Z}^{d}}
|\psi(\x-\eta,\eta)||\widehat{a}(\x-\eta)||\widehat{h}(\eta)|
\jjap{\x}^{s+\rho}
\Big)^{2}\Big)^{1/2}\\
&\stackrel{\eqref{cutTheta2}}{\lessdot_{s,\rho}}
\|(|\widehat{a}(j)|\jjap{j}^{\rho})\star(|\widehat{h}(j)|\jjap{j}^{s})\|_{\ell^{2}(\Z^{d})}
\\&
\lessdot_{s,\rho}
\|(|\widehat{a}(j)|\jjap{j}^{\rho})\|_{\ell^{1}(\Z^{d})}
\|(|\widehat{h}(j)|\jjap{j}^{s})\|_{\ell^{2}(\Z^{d})}
\lessdot_{s,\rho,s_0}|a|_{s_0+\rho}|h|_{s}\,,
\end{aligned}
\]
where we used that (since $s>0$, $\rho\geq0$) 
\[
|\x|\lessdot |\x - \eta| \;\;\;\Rightarrow\;\;\; 
|\x|^{s+\rho}\lessdot_{s,\rho} |\x-\eta|^\rho |\eta|^{s}
\]
Young's and Cauchy-Schwarz inequalities, 
and the fact that $s_0>d/2$, $s>0$. This implies \eqref{stimaprecisaQQ}.
\end{proof}

As a consequence of the result above we obtain the following.
 \begin{lemma}{\bf (Products).}\label{lem:prodotto}
Let $s\geq s_0>d/2$.
The product of any $a,b\in H^{s}(\T^{d};\C)$ can be equivalently expressed either as\\
1) \begin{equation*}
a\cdot b = 
 T_{a}b + T_{b}a + \mathcal{Q}(a)[b]\,,
\end{equation*}
where the remainder $\mathcal{Q}(a)[b]=\mathcal{F}(a)[b]$ (see \eqref{mappaQQ}) 
and in particular satisfies, 
for any $0\leq \rho\leq s-s_0$,
the bound
\begin{equation}\label{eq:paraproduct22 je}
|\mathcal{Q}(a)[b]|_{H^{s+\rho}}\lessdot_{s,\rho,s_0}
|a|_{s_0+\rho}|b|_{s}\,,
\end{equation}
or as \\
2) \begin{equation*}
a\cdot b = 
 T_{a}b + T_{b}a + \mathcal{Q}(b)[a]\,,
\end{equation*} 
\begin{equation*}
\widehat{\mathcal{Q}(b)[a]}(j)=\frac{1}{(2\pi)^{d}}
\sum_{\eta\in \mathbb{Z}^{d}}
\psi(j-\eta,\eta)\widehat{b}(j-\eta)\widehat{a}(\eta)\,,
\end{equation*}
where for any $0\leq \rho\leq s-s_0$
the following bound holds
\begin{equation*}
|\mathcal{Q}(b)[a]|_{H^{s+\rho}}\lessdot_{s,\rho,s_0}
|b|_{s_0+\rho}|a|_{s}\,.
\end{equation*}
\end{lemma}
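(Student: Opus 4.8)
The plan is to reduce the product formula to the already-established properties of the para-product operators $T_a$ and the bilinear map $\mathcal{F}$ from Lemma~\ref{extensionResto}. First I would write out the full product $a\cdot b$ on the Fourier side: one has
\[
\widehat{a\cdot b}(j)=\frac{1}{(2\pi)^{d}}\sum_{\eta\in\Z^{d}}\widehat{a}(j-\eta)\widehat{b}(\eta)\,,
\]
and then insert the partition of unity
\[
1=\chi_{\e}\Big(\frac{|j-\eta|}{\langle\eta\rangle}\Big)+\chi_{\e}\Big(\frac{|\eta|}{\langle j-\eta\rangle}\Big)+\psi(j-\eta,\eta)\,,
\]
which holds precisely because of the definition \eqref{alexander} of $\psi$. (One should check that the first two cutoffs are never simultaneously equal to $1$ with overlap issues, but since $0<\e<1/4$ the supports are compatible and the three terms genuinely add to $1$; this is the only elementary verification needed.) The term carrying $\chi_{\e}(|j-\eta|/\langle\eta\rangle)$ is by definition \eqref{paraprod} exactly $\widehat{T_a b}(j)$; the term carrying $\chi_{\e}(|\eta|/\langle j-\eta\rangle)$, after the change of summation variable $\eta\mapsto j-\eta$, becomes $\widehat{T_b a}(j)$; and the remaining term carrying $\psi(j-\eta,\eta)$ is by \eqref{mappaQQ} exactly $\widehat{\mathcal{F}(a)[b]}(j)$. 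This gives decomposition 1) with $\mathcal{Q}(a)[b]=\mathcal{F}(a)[b]$.

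For the quantitative bound \eqref{eq:paraproduct22 je}, I would simply invoke Lemma~\ref{extensionResto} with $\rho$ in place of $\rho$: for $0\le\rho\le s-s_0$ we have $a\in H^{s_0+\rho}$ (since $s_0+\rho\le s$ and $a\in H^s$), and the lemma yields
\[
|\mathcal{Q}(a)[b]|_{s+\rho}=|\mathcal{F}(a)[b]|_{s+\rho}\le C^{\max\{s,s_0\}}|a|_{s_0+\rho}|b|_{s}\,,
\]
which is precisely the claimed estimate, the $H^{s+\rho}$-norm being equivalent to $|\cdot|_{s+\rho}$ by Lemma~\ref{lemmaequivalenza}. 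For part 2), the point is that $\psi$ is \emph{symmetric}, $\psi(v,w)=\psi(w,v)$, so performing the change of variables $\eta\mapsto j-\eta$ in the expression for $\widehat{\mathcal{F}(a)[b]}(j)$ produces exactly $\widehat{\mathcal{Q}(b)[a]}(j)=\frac{1}{(2\pi)^{d}}\sum_{\eta}\psi(j-\eta,\eta)\widehat{b}(j-\eta)\widehat{a}(\eta)$, i.e. $\mathcal{F}(a)[b]=\mathcal{F}(b)[a]$ as operators, so decomposition 2) is literally the same identity rewritten, and its estimate follows from Lemma~\ref{extensionResto} applied with the roles of $a$ and $b$ exchanged.

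The main (and really only) obstacle is the bookkeeping in the partition-of-unity step: one must make sure the three cutoff pieces, written with the correct brackets $\langle\cdot\rangle$ and the correct frequency in the numerator/denominator, sum identically to $1$ for all $(j-\eta,\eta)$, and that after the symmetrizing change of variable $\eta\mapsto j-\eta$ the second para-product piece matches \eqref{paraprod} with $a$ and $b$ interchanged. Since $T_a$ is defined using $\chi_\e$ but Lemma~\ref{azioneSimboo} and the discussion after it show the definition is independent of the choice of cutoff (up to regularizing remainders, which here are absorbed into $\mathcal{F}$), there is no loss in taking the same $\e$ throughout. Everything else is a direct citation of Lemma~\ref{extensionResto}, so the proof is short.

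\begin{proof}
Expanding the product on the Fourier side,
\[
\widehat{a\cdot b}(j)=\frac{1}{(2\pi)^{d}}\sum_{\eta\in\Z^{d}}\widehat{a}(j-\eta)\widehat{b}(\eta)\,.
\]
By \eqref{alexander} one has, for all $v,w\in\R^{d}$, the identity
\[
1=\chi_{\e}\Big(\frac{|v|}{\langle w\rangle}\Big)+\chi_{\e}\Big(\frac{|w|}{\langle v\rangle}\Big)+\psi(v,w)\,.
\]
Inserting this with $v=j-\eta$, $w=\eta$ splits $\widehat{a\cdot b}(j)$ into three sums. The first sum is, by \eqref{paraprod}, exactly $\widehat{T_{a}b}(j)$. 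In the second sum we change the summation index $\eta\mapsto j-\eta$: it becomes
\[
\frac{1}{(2\pi)^{d}}\sum_{\eta\in\Z^{d}}\chi_{\e}\Big(\frac{|j-\eta|}{\langle \eta\rangle}\Big)\widehat{b}(j-\eta)\widehat{a}(\eta)=\widehat{T_{b}a}(j)\,.
\]
The third sum is, by \eqref{mappaQQ}, exactly $\widehat{\mathcal{F}(a)[b]}(j)$. This proves the decomposition 1) with $\mathcal{Q}(a)[b]:=\mathcal{F}(a)[b]$. Since $0\le\rho\le s-s_{0}$, we have $s_{0}+\rho\le s$ and hence $a\in H^{s_{0}+\rho}(\T^{d};\C)$; Lemma \ref{extensionResto} applied with this value of $\rho$ and regularity index $s$ gives
\[
|\mathcal{Q}(a)[b]|_{s+\rho}=|\mathcal{F}(a)[b]|_{s+\rho}\leq C(\rho)^{\max\{s,s_{0}\}}|a|_{s_{0}+\rho}|b|_{s}\,,
\]
which is \eqref{eq:paraproduct22 je}, using that $|\cdot|_{s+\rho}$ is equivalent to $\|\cdot\|_{H^{s+\rho}}$ by Lemma \ref{lemmaequivalenza}.

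For 2), observe that $\psi$ is symmetric, $\psi(v,w)=\psi(w,v)$, so in the third sum above the change of index $\eta\mapsto j-\eta$ gives
\[
\widehat{\mathcal{F}(a)[b]}(j)=\frac{1}{(2\pi)^{d}}\sum_{\eta\in\Z^{d}}\psi(\eta,j-\eta)\widehat{a}(\eta)\widehat{b}(j-\eta)=\frac{1}{(2\pi)^{d}}\sum_{\eta\in\Z^{d}}\psi(j-\eta,\eta)\widehat{b}(j-\eta)\widehat{a}(\eta)=\widehat{\mathcal{Q}(b)[a]}(j)\,.
\]
Thus $a\cdot b=T_{a}b+T_{b}a+\mathcal{Q}(b)[a]$ as well, and applying Lemma \ref{extensionResto} with the roles of $a$ and $b$ exchanged yields, for $0\le\rho\le s-s_{0}$,
\[
|\mathcal{Q}(b)[a]|_{H^{s+\rho}}\lessdot_{s,\rho,s_0}|b|_{s_{0}+\rho}|a|_{s}\,.
\]
This completes the proof.
\end{proof}
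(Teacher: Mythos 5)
Your proof is correct and follows essentially the same route as the paper: the paper likewise defines the remainder as $a\cdot b-T_ab-T_ba$, observes that its Fourier coefficients carry the cutoff $\psi(j-\eta,\eta)$, and then invokes Lemma~\ref{extensionResto} together with the frequency equivalence \eqref{cutTheta2} to place the strong norm on either factor; you merely spell out the partition-of-unity and change-of-variables steps that the paper leaves implicit (and your worry about "overlap" of the two cutoffs is moot, since the identity $1=\chi_\e+\chi_\e+\psi$ is tautological from the definition \eqref{alexander}).
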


\begin{proof}
Recall formul\ae\, \eqref{paraprod} and \eqref{cutofffunct}-\eqref{cutofffunctepsilon}
and let us define
\[
\mathcal{R}:=\mathcal{R}(a,b):=a\cdot b-T_{a}b-T_{b}a\,,\quad 
\widehat{\mathcal{R}}(j):=\sum_{\eta\in\Z^{d}}\Theta(j,\eta)\widehat{a}(j-\eta)\widehat{b}(\eta)\,,
\quad \forall j\in\Z^{d}
\]
where $\Theta : \R^{2}\to[-1,1]$ is the cut-off function defined as (see \eqref{alexander})
$\Theta(\xi,\eta):=\psi(\x-\eta,\eta)$.
The equivalence between the frequencies $\x-\eta$ and $\eta$
induced by \eqref{cutTheta2}
allows us to unload the strongest norm either on $a$ or on $b$, 
and obtain the desired inequalities provided that $s_0+\rho\leq s$ and 
using Lemma \ref{extensionResto}.
\end{proof}

\begin{lemma}{\bf (Tame estimate).}\label{lem:tametame}
Let $s\geq s_0>d/2$. Then there exists an absolute  constant $\tC>0$
such that, for any $a,b\in H^{s}(\mathbb{T}^{d};\mathbb{C})$, one has
\begin{equation}\label{tameestimate1}
|ab|_{s}\leq 
\mathtt{C}^{s}\big(
|a|_{s}|b|_{s_0}+|a|_{s_0}|b|_{s}\big)\,.
\end{equation}
\end{lemma}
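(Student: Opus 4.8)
The plan is to prove the tame estimate \eqref{tameestimate1} by combining the para-product decomposition from Lemma \ref{lem:prodotto} with the continuity of para-products on Sobolev spaces (Lemma \ref{azioneSimboo}) and the remainder bound from Lemma \ref{extensionResto}, taking care that all constants depend on $s$ only through a factor of the form $C^{s}$ so that the $\lessdot_s$ notation upgrades to the explicit $\mathtt{C}^s$. First I would write $ab = T_a b + T_b a + \mathcal{Q}(a)[b]$. For the two para-product terms, Lemma \ref{azioneSimboo}$(i)$ gives $|T_a b|_s \lessdot_{s,s_0} |a|_{s_0}|b|_s$ and, symmetrically, $|T_b a|_s \lessdot_{s,s_0} |b|_{s_0}|a|_s$; both are of the required shape. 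For the remainder term I would invoke \eqref{eq:paraproduct22 je} (or \eqref{stimaprecisaQQ}) with $\rho = 0$, which yields $|\mathcal{Q}(a)[b]|_s \lessdot_{s,s_0} |a|_{s_0}|b|_s$ — again an admissible term. Summing the three contributions gives the claim.

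The one genuine point requiring care is the dependence of the implicit constants on $s$ and $s_0$. The statements of Lemmata \ref{azioneSimboo} and \ref{extensionResto} already record this dependence explicitly: the constant in \eqref{actionSob} is of the form $C^{\max\{s,s_0\}}$ and likewise in \eqref{stimaprecisaQQ}, with $C$ an absolute constant (independent of $s,s_0$). Since here $s\geq s_0>d/2$ is fixed, $\max\{s,s_0\}=s$, so each of the three bounds carries a constant $\leq C^{s}$ for one absolute $C$. Adding them and absorbing the factor $3$ into a slightly larger absolute base, i.e. taking $\mathtt{C}$ with $\mathtt{C}^s \geq 3 C^s$ for all relevant $s$ (which holds as soon as $\mathtt{C}\geq 3C$), produces \eqref{tameestimate1} with the asserted absolute constant $\mathtt{C}$. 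I would state this last absorption step explicitly since it is precisely the difference between $\lessdot_s$ and the sharp exponential constant the lemma advertises.

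I do not expect a serious obstacle here: the lemma is essentially a repackaging of the para-product calculus already developed, and the only thing to watch is not to let a constant like $\mathtt{c}(s_0)=(\sum_\x \jap{\x}^{-2s_0})^{1/2}$ (which appeared in \eqref{cespuglio1}) secretly depend on $s$ — it does not, it depends only on $s_0$ and $d$, both fixed. A minor alternative, if one preferred to avoid quoting \eqref{eq:paraproduct22 je} at $\rho=0$ directly, would be to reprove the remainder bound by the same convolution-plus-Young argument used in the proof of Lemma \ref{azioneSimboo}$(i)$: the frequency localization $\psi(\x-\eta,\eta)\neq 0 \Rightarrow C^{-1}|\eta|\leq|\x-\eta|\leq C|\eta|$ forces $\jap{\x}\lessdot \jap{\eta}$ and $\jap{\x}\lessdot\jap{\x-\eta}$, so $\jap{\x}^s \lessdot_s \jap{\x-\eta}^{s}$ or $\lessdot_s \jap{\eta}^s$, and one distributes the weight onto whichever of $a,b$ is to carry the $H^s$-norm, estimating the other in $\ell^1$ via Cauchy–Schwarz against $\jap{\cdot}^{-s_0}$. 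Either route closes the proof in a couple of lines.
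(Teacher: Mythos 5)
Your proof is correct and follows exactly the paper's own route: the decomposition $ab=T_ab+T_ba+\mathcal{Q}(a)[b]$ from Lemma \ref{lem:prodotto}, the action estimate \eqref{actionSob} for the two para-products, and the remainder bound \eqref{eq:paraproduct22 je} at $\rho=0$. Your explicit tracking of the $C^{\max\{s,s_0\}}$ constants and the absorption of the factor $3$ is a welcome elaboration of a step the paper leaves implicit, but it is not a different argument.
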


\begin{proof}
Since $a\cdot b=T_ab+T_ba +\cQ(a)[b]$,  
the bound follows directly from 
Lemmata  \ref{azioneSimboo} and \ref{lem:prodotto} (applied with $\rho=0$).
\end{proof}

 \subsection{Compositions}\label{subparacompo}
 We now discuss compositions among para-products, Fourier multipliers and remainders.

\begin{lemma}{\bf (Commutators).}\label{lem:commutator}
Consider the operators $\langle D\rangle$ and $\jjap{D}$  defined in \eqref{langlesimbolo}
and \eqref{def:japjapModificato} respectively.
Let $s_0>d/2$, $m\in\R$ and $a\in H^{s_0}(\T^{d};\C)$.
Then for any $s\in \R$ one has
\begin{equation}\label{cespuglio3}
|\langle D\rangle^{-m}\circ T_{a} h|_{s+m}+
|T_{a}\circ\langle D\rangle^{-m} h|_{s+m}\lessdot_{s,s_0} R^{|m|}|a|_{s_0}|h|_s\,,\qquad 
\forall\, h\in H^{s}(\T^{d};\C)\,.
\end{equation}
If in addition $a$ belongs to $H^{s_0+1}(\T^{d};\C)$, then for $s\in \R$ and $p>0$, 
one has
\begin{equation}\label{cespuglio4}
\big| \big[\jjap{D}^{p} ,T_a\big] h\big|_{s+1-p}\leq R^p\mathtt{C}^{\max\{s,s_0\}+p}|a|_{s_0+1}|h|_{s}\,,\qquad 
\forall\, h\in H^{s}(\T^{d};\C)\,,
\end{equation}
for some absolute constant $\mathtt{C}>0$.
\end{lemma}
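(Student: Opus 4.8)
The two bounds have the same flavour: both are commutator-type estimates between a Fourier multiplier (a power of $\langle D\rangle$ or $\jjap{D}$) and a para-product $T_a$, and in both cases the gain in regularity comes from the fact that the cut-off $\chi_\epsilon(|\x-\eta|/\langle\eta\rangle)$ in the definition of $T_a$ localises to $|\x-\eta|\ll\langle\eta\rangle$, hence $\langle\x\rangle$ and $\langle\eta\rangle$ (as well as $\jjap{\x}$ and $\jjap{\eta}$) are comparable up to absolute constants (this is precisely the content of \eqref{equixieta}). The plan is to work entirely on the Fourier side, write each operator as an explicit bilinear expression in $\widehat a$ and $\widehat h$, factor out the multiplier symbol, estimate the resulting symbol-difference, and then conclude exactly as in the proof of Lemma \ref{azioneSimboo} via Young's inequality for sequence convolutions together with the embedding $\|\widehat a\|_{\ell^1}\lessdot_{s_0}|a|_{s_0}$ from \eqref{cespuglio1}.

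For \eqref{cespuglio3}: the operator $\langle D\rangle^{-m}\circ T_a$ has Fourier multiplier $\langle\x\rangle^{-m}\chi_\epsilon(|\x-\eta|/\langle\eta\rangle)\widehat a(\x-\eta)$ acting on $\widehat h(\eta)$, and $T_a\circ\langle D\rangle^{-m}$ has $\langle\eta\rangle^{-m}\chi_\epsilon(\cdot)\widehat a(\x-\eta)$. In the $|\cdot|_{s+m}$ norm we carry a weight $\jjap{\x}^{s+m}$; using $\langle\x\rangle\le 2\jjap\x$ (Remark \ref{rmk:facile}) and the comparability $\langle\x\rangle\sim\langle\eta\rangle\sim\jjap\x\sim\jjap\eta$ on the support of the cut-off — together with the crude bounds $\langle\x\rangle^{-m}\jjap\x^{m}\lessdot R^{|m|}$ (since $\jjap\x\ge R$ and both brackets are comparable, losing at most $R^{|m|}$ when $m<0$) — one reduces the weight to $\jjap\eta^{s}$ hitting $\widehat h(\eta)$, leaving a clean convolution $(|\widehat h(\x)|\jjap\x^{s})\star|\widehat a(\x)|$. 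Young's inequality and \eqref{cespuglio1} then give the stated bound. The two terms are handled in the same way, trading the $\jjap\x^m$ weight either against $\langle\x\rangle^{-m}$ or against $\langle\eta\rangle^{-m}\sim\langle\x\rangle^{-m}$.

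For \eqref{cespuglio4}: here the commutator structure is essential, since $\jjap D^{p}$ alone loses $p$ derivatives. On the Fourier side $[\jjap D^{p},T_a]$ has multiplier $\big(\jjap\x^{p}-\jjap\eta^{p}\big)\chi_\epsilon(|\x-\eta|/\langle\eta\rangle)\widehat a(\x-\eta)$. The key estimate is the elementary symbolic inequality
\[
\big|\jjap\x^{p}-\jjap\eta^{p}\big|\lessdot_{p}\,|\x-\eta|\,\big(\jjap\x^{p-1}+\jjap\eta^{p-1}\big)\lessdot_{p}\,\langle\x-\eta\rangle\,\jjap\eta^{p-1}\,,
\]
valid on the support of the cut-off (where $\jjap\x\sim\jjap\eta$); this is a mean-value/telescoping bound and is where the gain of one derivative comes from. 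Multiplying by the target weight $\jjap\x^{s+1-p}\sim\jjap\eta^{s+1-p}$ and distributing gives $\langle\x-\eta\rangle\jjap\eta^{s}$, i.e. one extra bracket falls on $\widehat a$ (turning $|a|_{s_0}$ into $|a|_{s_0+1}$ in \eqref{cespuglio1}, which is exactly why the hypothesis $a\in H^{s_0+1}$ is needed), and $\jjap\eta^{s}$ falls on $\widehat h$. Then Young's inequality and \eqref{cespuglio1} (applied with $\rho=1$, as in \eqref{stimarestoresto}) close the estimate with a constant $R^{p}\mathtt C^{\max\{s,s_0\}+p}$, where the $R^p$ tracks the loss when passing between $|j|$ and $\jjap j$ for frequencies $|j|<R$ and the $\mathtt C^{\max\{s,s_0\}+p}$ absorbs the $s$- and $p$-dependent combinatorial constants from the symbolic inequality and from Cauchy–Schwarz.

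\textbf{Main obstacle.} The only delicate point is bookkeeping the constants: one must check that the symbolic bound $|\jjap\x^p-\jjap\eta^p|\lessdot_p\langle\x-\eta\rangle\jjap\eta^{p-1}$ holds with a constant that is at most $\mathtt C^{p}$ for an absolute $\mathtt C$ (so that it can be folded into $\mathtt C^{\max\{s,s_0\}+p}$), and that the passage between $\langle\cdot\rangle$ and $\jjap\cdot$ weights — which can differ by the factor $R$ on low frequencies — contributes exactly the announced powers $R^{|m|}$ and $R^{p}$ and no worse. Everything else is a routine repetition of the convolution/Young/$\ell^1$-embedding argument already carried out in Lemmata \ref{azioneSimboo} and \ref{extensionResto}.
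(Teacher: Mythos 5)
Your proposal is correct and follows essentially the same route as the paper's proof: both estimates are handled on the Fourier side, \eqref{cespuglio3} by trading the weight $\jjap{\x}^{m}$ against $\langle\x\rangle^{-m}$ (or $\langle\eta\rangle^{-m}$) at the cost of $R^{|m|}$ and concluding as in Lemma \ref{azioneSimboo}, and \eqref{cespuglio4} by the mean-value bound $|\jjap{\x}^{p}-\jjap{\eta}^{p}|\lesssim R^{p}\mathtt C^{p}|\x-\eta|\,\jjap{\x}^{p-1}$ on the support of the cut-off, which shifts one bracket onto $\widehat a$ (whence $|a|_{s_0+1}$) before applying Young's inequality and the $\ell^1$ embedding. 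Your identification of where the factors $R^{|m|}$, $R^{p}$ and the hypothesis $a\in H^{s_0+1}$ enter matches the paper exactly.
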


\begin{proof}
Let us start form bound \eqref{cespuglio3}. We have
\[
\begin{aligned}
|\langle D\rangle^{-m}\circ T_{a} h|_{s+m}^{2}&\leq
\sum_{j\in\mathbb{Z}^{d}}
\jjap{j}^{2(s+m)}\Big|
\sum_{\eta\in \mathbb{Z}^{d}}
\chi_{\epsilon}\left(\frac{|j-\eta|}{\langle \eta\rangle}\right)
\widehat{a}(j-\eta)\widehat{h}(\eta)\langle j\rangle^{-m}\Big|^{2}
\\
&\stackrel{\eqref{equixieta}}{\lessdot_s}
\sum_{j\in\mathbb{Z}^{d}}
\Big(
\sum_{\eta\in \mathbb{Z}^{d}}
|\widehat{a}(j-\eta)||\widehat{h}(\eta)|\jjap{\eta}^{s}
\Big(\frac{\jjap{j}}{\langle j\rangle}\Big)^{m}
\Big)^{2}\,.
\end{aligned}
\]
Notice that 
$\jjap{j}/\langle j\rangle\leq R$ and $\langle j\rangle/\jjap{j}\leq 2$. Therefore,
reasoning as in Item $(i)$ of Lemma \ref{azioneSimboo},
we deduce \eqref{cespuglio3} for the operator $\langle D\rangle^{-m}\circ T_{a}$.
The other estimate is similar.
Let us now check the bound \eqref{cespuglio4}.
First of all, by using \eqref{def:japjapModificato} and \eqref{cutofffunct}-\eqref{cutofffunctepsilon},
one can easily see that,
for any $j,\eta\in\Z^{d}$ such that $\chi_{\e}(|j-\eta|/\langle \eta\rangle)\neq0$,
one has
\begin{equation*}
\big|\jjap{j}^{p}-\jjap{\eta}^{p}\big|
\leq C |j-\eta| \jjap{j}^{-p + 1}\max\{|j|,|\eta|\}^{p-1}\leq R^p \tilde{C}^{p}|j-\eta|\,,
\end{equation*}
 for some  constants  $\tilde{C}\geq C>1$.
 We hence have
 \begin{equation*}
 \begin{aligned}
 \big| \big[\jjap{D}^{p} ,T_a\big] h\big|_{s+1-p}^{2}&\leq
 \sum_{j\in\mathbb{Z}^{d}}
\Big(
\sum_{\eta\in \mathbb{Z}^{d}}
\chi_{\epsilon}\left(\frac{|j-\eta|}{\langle \eta\rangle}\right)
|\widehat{a}(j-\eta)||\widehat{h}(\eta)|\jjap{j}^{s}
\frac{\big|\jjap{j}^{p}-\jjap{\eta}^{p}\big|}{\jjap{j}^{p-1}}
\Big)^{2}
\\&
\lessdot_{s}
\widetilde{C}^{2p}\sum_{j\in\mathbb{Z}^{d}}
\Big(
\sum_{\eta\in \mathbb{Z}^{d}}
|\widehat{a}(j-\eta)| |j-k| |\widehat{h}(\eta)|\jjap{\eta}^{s}
\Big)^{2}
\\&
\lessdot_{s} \widetilde{C}^{2p}\|(|\widehat{a}(j)|\jjap{j} \star(|\widehat{h}(j)|\jjap{j}^{s})\|_{\ell^{2}(\Z^{d})} \,.
 \end{aligned}
 \end{equation*}
 Hence, reasoning as done in item $(i)$ of Lemma \ref{azioneSimboo},
 we get \eqref{cespuglio4}.
\end{proof}

We have the following.
\begin{lemma}{\bf (Composition 1).}\label{lem:composition}
Let $s_0>d/2$, $m_1,m_2\geq0$ and consider 
$a,b\in H^{s_0}(\T^{d};\C)$.
Then for any $s\in \R$ one has
\begin{equation*}
|T_{a}\circ\langle D\rangle^{-m_1}\circ T_{b}
\circ\langle D\rangle^{-m_2}h|_{s+m_1+m_2}\lessdot_{s,s_0}
R^{m_1 + m_2}|a|_{s_0}|b|_{s_0}|h|_{s}\,,
\qquad
\forall\, h\in H^{s}(\T^{d};\C)\,.
\end{equation*}
\end{lemma}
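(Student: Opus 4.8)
The statement is a purely ``algebraic'' estimate on the composition of two para-products sandwiching negative-order Fourier multipliers, and the natural strategy is to reduce it to the commutator/action bounds already proved in Lemma \ref{lem:commutator} (inequality \eqref{cespuglio3}) together with the action bound \eqref{actionSob} of Lemma \ref{azioneSimboo}. The plan is to insert the Fourier multipliers against the para-products one at a time, using \eqref{cespuglio3} to ``absorb'' a negative power $\jap{D}^{-m}$ (after exchanging $\langle D\rangle$ for $\jjap{D}$, which costs only a factor $R^{|m|}$, cf.\ Remark \ref{rmk:facile}) into an adjacent $T_a$ or $T_b$, and then close with \eqref{actionSob} for the remaining para-product.

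\textbf{Key steps.} First I would rewrite $\langle D\rangle^{-m_i}$ in terms of $\jjap{D}^{-m_i}$: since $\langle j\rangle \leq 2\jjap{j}$ and $\jjap{j}\geq R$, one has $(\langle j\rangle/\jjap{j})^{\pm m_i}$ bounded by a constant times $R^{m_i}$, so it suffices to bound $|T_a\circ \jjap{D}^{-m_1}\circ T_b\circ \jjap{D}^{-m_2}h|_{s+m_1+m_2}$ up to the factor $R^{m_1+m_2}$ (this is the same kind of reduction used in the proof of \eqref{cespuglio3}). Next, apply \eqref{cespuglio3} (with $\langle D\rangle$ replaced by $\jjap{D}$, which is harmless by the same comparison) to the rightmost block $T_b\circ\jjap{D}^{-m_2}$: it maps $H^{s}\to H^{s+m_2}$ with norm $\lessdot_{s,s_0} R^{m_2}|b|_{s_0}$, so set $g:=T_b\circ\jjap{D}^{-m_2}h$ with $|g|_{s+m_2}\lessdot_{s,s_0} R^{m_2}|b|_{s_0}|h|_s$. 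Then apply \eqref{cespuglio3} again to the block $T_a\circ\jjap{D}^{-m_1}$ — or, equivalently, use that $\jjap{D}^{-m_1}\circ T_a$ is bounded $H^{s'}\to H^{s'+m_1}$ — keeping track of the fact that $\jjap D$ and $T_a$ do not commute; one routes $\jjap{D}^{-m_1}$ past $T_a$ via the commutator bound, at the cost of a further $R^{m_1}$ and a factor $|a|_{s_0}$. Collecting the three estimates gives the claimed bound with the power $R^{m_1+m_2}$ and the product $|a|_{s_0}|b|_{s_0}|h|_s$, with an implicit constant of the form $C^{\max\{s,s_0\}}$, i.e.\ $\lessdot_{s,s_0}$.

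\textbf{Main obstacle.} The only subtlety — and it is minor — is the non-commutativity of $\jjap{D}$ with the para-products: one cannot simply ``cancel'' $\jjap{D}^{-m_1}$ against a positive power, and must decide on which side to place it. The cleanest route is to avoid commutators altogether by observing that Lemma \ref{lem:commutator} already packages both orders, namely $|\jjap{D}^{-m}\circ T_a h|_{s+m}$ \emph{and} $|T_a\circ\jjap{D}^{-m}h|_{s+m}$ are controlled by $R^{|m|}|a|_{s_0}|h|_s$ (this is exactly \eqref{cespuglio3} after swapping $\langle D\rangle$ for $\jjap D$); so I would bound $T_a\circ\jjap{D}^{-m_1}$ as a single operator $H^{s+m_2}\to H^{s+m_1+m_2}$ and $\jjap{D}^{-m_2}\circ T_b$ — no wait, the grouping $T_a\circ(\jjap{D}^{-m_1}\circ T_b)\circ \jjap{D}^{-m_2}$ works if I treat $\jjap{D}^{-m_1}\circ T_b: H^{s}\to H^{s+m_1}$, then $T_a: H^{s+m_1}\to H^{s+m_1}$ by \eqref{actionSob}, absorbing $\jjap D^{-m_2}$ first against $h$ — so the bookkeeping of which multiplier pairs with which para-product is the one thing to get right, and everything else is a direct chaining of \eqref{actionSob} and \eqref{cespuglio3}. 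I expect the proof to be three or four lines: reduce $\langle D\rangle$ to $\jjap D$, then compose the bounds of Lemmata \ref{azioneSimboo} and \ref{lem:commutator} in the right order.
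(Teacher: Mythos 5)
Your proposal is correct and follows essentially the same route as the paper: the paper's proof is exactly the two-step chaining of the bound \eqref{cespuglio3} applied first to the block $T_a\circ\langle D\rangle^{-m_1}$ (as a map $H^{s+m_2}\to H^{s+m_1+m_2}$, cost $R^{m_1}|a|_{s_0}$) and then to $T_b\circ\langle D\rangle^{-m_2}$ (as a map $H^{s}\to H^{s+m_2}$, cost $R^{m_2}|b|_{s_0}$). Your preliminary reduction from $\langle D\rangle$ to $\jjap{D}$ is unnecessary, since \eqref{cespuglio3} is already stated for $\langle D\rangle^{-m}$ composed with a para-product and measured in the $|\cdot|_{s+m}$ norm, but this is harmless.
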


\begin{proof}
We shall apply iteratively Lemma \ref{lem:commutator}.
By setting $R:= T_{a}\circ\langle D\rangle^{-m_1}\circ T_{b}
\circ\langle D\rangle^{-m_2} $
we have that
\[
\begin{aligned}
|Rh|_{s+m_1+m_2}
\stackrel{\eqref{cespuglio3}}{\lessdot_{s,s_0}}
R^{m_1}|a|_{s_0}
|T_{b}\circ\langle D\rangle^{-m_2}h|_{s}
\stackrel{\eqref{cespuglio3}}{\lessdot_{s,s_0}}
R^{m_1+m_2}|a|_{s_0}|b|_{s_0}|h|_{s}\,.
\end{aligned}
\]
This concludes the proof.
\end{proof}

\begin{lemma}{\bf (Compositions 2).}\label{compoparapara}
Let $s_0>d/2$, $\rho\geq 0$ and consider $a,b\in H^{s_0+\rho}(\T^{d};\C)$.
Then one has
\[
T_{a}\circ T_{b}[\cdot] = T_{ab}[\cdot] + \mathcal{Q}(a,b)[\cdot],
\]
where the remainder $ \mathcal{Q}(a,b)$ satisfies 
\[
\begin{aligned}
H^{s_0+\rho}(\mathbb{T}^{d};\mathbb{C})\times H^{s_0+\rho}(\mathbb{T}^{d};\mathbb{C})
&\rightarrow 
\mathcal{L}\big(H^{s}(\mathbb{T}^{d};\mathbb{C}); H^{s+\rho}(\mathbb{T}^{d};\mathbb{C}) \big)\,,
\qquad s>0\,,
\\
(a,b) &\mapsto \mathcal{Q}(a,b)[\cdot]
\end{aligned}
\]
and
\begin{equation}\label{calma1}
|\mathcal{Q}(a,b)[h]|_{s+\rho}\lessdot_{s,\rho,s_0}
|a|_{s_0+\rho}|b|_{s_0+\rho}|h|_{s}\,,
\qquad 
\forall\, h\in H^{s}(\T^d;\C)\,.
\end{equation}
\end{lemma}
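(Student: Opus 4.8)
The plan is to use that $\mathcal{Q}(a,b)$ is, by construction, the operator $T_{a}\circ T_{b}-T_{ab}$. Since $a,b\in H^{s_0+\rho}\subset H^{s_0}$ and, by Lemma \ref{lem:tametame}, $ab\in H^{s_0}$ too (the space $H^{s_0}$ being an algebra for $s_0>d/2$), Lemma \ref{azioneSimboo}$(i)$ already gives that $T_a$, $T_b$, $T_{ab}$, hence $\mathcal{Q}(a,b)$, are bounded on every $H^{s}$; what must be proved is the gain of $\rho$ derivatives. I would first take $a,b,h$ to be trigonometric polynomials (so all Fourier series are finite sums) and compute the Fourier coefficients of $\mathcal{Q}(a,b)h$. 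Iterating \eqref{paraprod} (calling $k$ the intermediate frequency), using the Fourier product formula for $\widehat{ab}$, and changing variable $m:=k-\ell$, one gets
\[
\widehat{\mathcal{Q}(a,b)h}(j)=\frac{1}{(2\pi)^{d}}\sum_{\ell\in\Z^{d}}\widehat h(\ell)\sum_{m\in\Z^{d}}\theta_\epsilon(j,\ell,m)\,\widehat a(j-\ell-m)\,\widehat b(m)\,,\qquad
\theta_\epsilon(j,\ell,m):=\chi_\epsilon\Big(\tfrac{|j-\ell-m|}{\langle\ell+m\rangle}\Big)\chi_\epsilon\Big(\tfrac{|m|}{\langle\ell\rangle}\Big)-\chi_\epsilon\Big(\tfrac{|j-\ell|}{\langle\ell\rangle}\Big)\,,
\]
so that $\mathcal{Q}(a,b)h$ has exactly the structure of the operators handled in the proofs of Lemmata \ref{azioneSimboo}--\ref{extensionResto}, and everything reduces to a support analysis of the (uniformly bounded, $|\theta_\epsilon|\le2$) symbol $\theta_\epsilon$.

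I would extract two facts about $\mathrm{supp}\,\theta_\epsilon$. First, $\mathrm{supp}\,\theta_\epsilon$ is contained in the union of the supports of its two summands, and on each of these $|j-\ell|\lesssim\langle\ell\rangle$ — directly for the single cut-off, and for the product from $|j-\ell|\le|j-\ell-m|+|m|$ together with $\langle\ell+m\rangle\le2\langle\ell\rangle$ on that set (using $0<\epsilon<1/4$, as in \eqref{equixieta}) — whence $\langle j\rangle\lesssim\langle\ell\rangle$. Second, and this is the source of the smoothing, if $|m|\le\tfrac{\epsilon}{2}\langle\ell\rangle$ and $|j-\ell-m|\le\tfrac{\epsilon}{2}\langle\ell\rangle$, then all three arguments of $\chi$ entering $\theta_\epsilon$ fall below the plateau threshold $5/4$ (using $\langle\ell+m\rangle\ge\tfrac{7}{8}\langle\ell\rangle$ in that regime), so both summands of $\theta_\epsilon$ equal $1$ and $\theta_\epsilon=0$; contrapositively, on $\mathrm{supp}\,\theta_\epsilon$ one has $\max\{|j-\ell-m|,|m|\}\ge\tfrac{\epsilon}{2}\langle\ell\rangle$. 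Converting $\langle\cdot\rangle$ into $\jjap{\cdot}$ through Remark \ref{rmk:facile} — using $\jjap{\cdot}\ge R$ and $\langle\cdot\rangle\lesssim\jjap{\cdot}$ \emph{uniformly in} $R$ — these two facts combine into
\[
\jjap{j}\lesssim\jjap{\ell}\lesssim\max\{\jjap{j-\ell-m},\jjap{m}\}\qquad\text{on }\ \mathrm{supp}\,\theta_\epsilon\,,
\]
whence, for $s>0$ and $\rho\ge0$, $\ \jjap{j}^{s+\rho}\lessdot_{s,\rho}\jjap{\ell}^{s}\big(\jjap{j-\ell-m}^{\rho}+\jjap{m}^{\rho}\big)$.

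Then I would run the triple-convolution estimate in the spirit of \eqref{natale2} and \eqref{stimarestoresto}: inserting $|\theta_\epsilon|\le2$ and the last bound into $|\mathcal{Q}(a,b)h|_{s+\rho}^{2}=\sum_{j}\jjap{j}^{2(s+\rho)}|\widehat{\mathcal{Q}(a,b)h}(j)|^{2}$, and recalling that the frequencies $j-\ell-m$, $m$, $\ell$ sum to $j$, the right-hand side is dominated, up to $\lessdot_{s,\rho}$ constants, by
\[
\big\|\big(|\widehat a|\,\jjap{\cdot}^{\rho}\big)\star|\widehat b|\star\big(|\widehat h|\,\jjap{\cdot}^{s}\big)\big\|_{\ell^{2}(\Z^{d})}^{2}+\big\||\widehat a|\star\big(|\widehat b|\,\jjap{\cdot}^{\rho}\big)\star\big(|\widehat h|\,\jjap{\cdot}^{s}\big)\big\|_{\ell^{2}(\Z^{d})}^{2}\,.
\]
Young's inequality for sequences bounds each summand by $\|\,|\widehat a|\,\jjap{\cdot}^{\rho}\|_{\ell^{1}}^{2}\,\|\,|\widehat b|\|_{\ell^{1}}^{2}\,\|\,|\widehat h|\,\jjap{\cdot}^{s}\|_{\ell^{2}}^{2}$ (resp.\ with the weight $\jjap{\cdot}^{\rho}$ on $b$), and Cauchy--Schwarz with $s_0>d/2$ — exactly as in \eqref{cespuglio1} — gives $\|\,|\widehat a|\,\jjap{\cdot}^{\rho}\|_{\ell^{1}}\le\mathtt{c}(s_0)\,|a|_{s_0+\rho}$, $\|\,|\widehat b|\|_{\ell^{1}}\le\mathtt{c}(s_0)\,|b|_{s_0}\le\mathtt{c}(s_0)\,|b|_{s_0+\rho}$ and symmetrically, while $\|\,|\widehat h|\,\jjap{\cdot}^{s}\|_{\ell^{2}}=|h|_{s}$. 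This proves \eqref{calma1} for trigonometric polynomials, and the stated continuity of $(a,b)\mapsto\mathcal{Q}(a,b)\in\mathcal{L}(H^{s};H^{s+\rho})$ follows by density of smooth functions in $H^{s_0+\rho}$.

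The main obstacle — short once one sees it — is the cancellation in the second support fact: $T_aT_b$ and $T_{ab}$ coincide precisely on the region where both $a$ and $b$ carry frequency negligible with respect to the input $h$, so that on $\mathrm{supp}\,\theta_\epsilon$ the frequency $\langle\ell\rangle$ of $h$ is always comparable to that of $a$ or of $b$, and the $\rho$ derivatives can be traded accordingly. The only other thing to watch is performing the replacement of $\langle\cdot\rangle$ by $\jjap{\cdot}$ so that the final constant depends on $s,\rho,s_0$ only and not on the parameter $R$.
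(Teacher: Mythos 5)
Your proposal is correct and follows essentially the same route as the paper: identify the kernel $\mathtt{r}(j,\theta,\eta)=\chi_{\e}\big(\tfrac{|j-\theta|}{\langle\theta\rangle}\big)\chi_{\e}\big(\tfrac{|\theta-\eta|}{\langle\eta\rangle}\big)-\chi_{\e}\big(\tfrac{|j-\eta|}{\langle\eta\rangle}\big)$ of $T_aT_b-T_{ab}$ (your $\theta_\epsilon(j,\ell,m)$ after the change of variables), show on its support that the output frequency is controlled by the input frequency and that at least one of the frequencies of $a$ or $b$ must be comparable to it, and conclude by the triple-convolution/Young/Cauchy--Schwarz scheme of \eqref{natale2}--\eqref{stimarestoresto}. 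Your support analysis is in fact a more carefully justified version of the paper's (the paper states the support conditions only ``for instance'' and writes the $\rho$-weight on $a$ alone, whereas your two-term bound $\jjap{j}^{\rho}\lessdot_{\rho}\jjap{j-\ell-m}^{\rho}+\jjap{m}^{\rho}$ is the precise statement; both lead to the same estimate \eqref{calma1} since it carries $|a|_{s_0+\rho}|b|_{s_0+\rho}$).
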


\begin{proof}
Recalling \eqref{paraprod} and setting $\mathcal{Q}=\mathcal{Q}(a,b)[\cdot]$ we note that
\[
\widehat{(\mathcal{Q}h)}(j)=
\sum_{\eta,\theta\in\Z^{d}}
\mathtt{r}(j,\theta,\eta)
\widehat{a}(j-\theta) \widehat{b}(\theta-\eta)
\widehat{h}(\eta)\,,
\]
where, for any $j,\eta,\theta\in\Z^{d}$ we defined
\[
\mathtt{r}(j,\theta,\eta)
:=\chi_{\e}
\Big(\frac{|j-\theta|}{\langle \theta\rangle}\Big)
\chi_{\e}\Big(
\frac{|\theta-\eta|}{\langle \eta\rangle}
\Big)
-\chi_{\e}\Big(\frac{|j-\eta|}{\langle \eta\rangle}\Big)
\]
One can check that , for instance,
\begin{equation*}
\mathtt{r}(j,\theta,\eta)\neq0
\quad \Rightarrow \quad 
|j-\eta|\leq \frac{8}{5}\e|\eta|\,,\;\;
|\eta-\theta|\leq \frac{8}{5}\e|\theta|\,,\;\;
|j-\theta|\geq \frac{5}{4}\e|\theta|\,.
\end{equation*}
The above inequalities imply the equivalence of $|j|,\,|\eta|$, and $|\theta - j|$, 
with appropriate pure constants, which yields the following chain of inequalities.
From the definition of the norm we get
\begin{equation*}
| \widehat{(\mathcal{Q}h)}(j) |\jjap{j}^{s+\rho}
\lessdot_{s}
\sum_{\eta,\theta\in\Z^d}
\jjap{j-\theta}^{\rho}|\widehat{a}(j-\theta)|
| \widehat{b}(\theta-\eta) |
| \widehat{h}(\eta) |\jjap{\eta}^{s}\,,\qquad \forall j,\theta,\eta\in\Z^{d}\,.
\end{equation*}
which, together with Young's inequality, implies
\[
\begin{aligned}
|\mathcal{Q}h|_{s+\rho}&=
\|\widehat{(\mathcal{Q}h)}(j)\jjap{j}^{s+\rho}\|_{\ell^{2}(\Z)}
\lessdot_s
\|
(\widehat{a}(j)\jjap{j}^{\rho})\star
(\widehat{b}(j))\star
(\widehat{h}(j)\jjap{j}^{s})
\|_{\ell^{2}(\Z^d)}
\\&\lessdot_{s}
\|(\widehat{a}(j)\jjap{j}^{\rho})\|_{\ell^{1}(\Z^d)}\|
(\widehat{b}(j))\|_{\ell^{1}(\Z^{2})}
\|\widehat{h}(j)\jjap{j}^{s}\|_{\ell^{2}(\Z^{2})}
\lessdot_{s,s_0}|a|_{s_0+\rho}|b|_{s_0+\rho}|h|_{s}\,,
\end{aligned}
\]
which is the estimate \eqref{calma1}.
\end{proof}

\begin{lemma}{\bf (Compositions 3).}\label{compoparasmooth}
Let $\rho\geq0$, $s_0>d/2$, 
and consider a map
\begin{equation*}
\begin{aligned}
\mathcal{Q} \; :\; B_{1}(H^{s_0+\rho}(\T^{d};\C))&\longrightarrow 
\mathcal{L}\big(H^{s}(\T^{d};\C); H^{s+\rho}(\T^{d};\C)\big)\,,\;\;\;s>0
\\&
\!\!\!\!b \;\;\mapsto\;\; \mathcal{Q}(b)
\end{aligned}
\end{equation*}
satisfying 
\begin{equation}\label{calma4}
|\mathcal{Q}(b)[h]|_{s+\rho}\lessdot_{s,\rho}|b|_{s_0+\rho}|h|_{s}\,,\qquad 
\forall\, h\in H^{s}(\T^{d};\C)\,,\;\;\;s>0\,.
\end{equation}
Consider also $a\in H^{s_0}$. Then one has
\begin{equation}\label{calma3}
|T_{a}\circ \mathcal{Q}(b)[h]|_{s+\rho}
+|\mathcal{Q}(b)\circ T_a [h]|_{s+\rho}
\lessdot_{s,\rho,s_0}|b|_{s_0+\rho}|a|_{s_0}
|h|_{s}\,,
\end{equation}
for any $h\in H^{s}(\T^{d};\C)$.
\end{lemma}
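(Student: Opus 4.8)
The plan is to obtain both estimates in \eqref{calma3} purely by composition, using two facts already at our disposal: the action bound for para-products on Sobolev spaces, Lemma \ref{azioneSimboo}$(i)$, which says that $T_a$ is bounded on $H^{\sigma}$ for \emph{every} real index $\sigma$ with operator norm $\lessdot_{\sigma,s_0}|a|_{s_0}$, and the standing hypothesis \eqref{calma4} on the family $\mathcal{Q}(b)$. Since $\mathcal{Q}$ is only defined on $B_{1}(H^{s_0+\rho}(\T^d;\C))$, throughout we take $b$ in that ball so that $\mathcal{Q}(b)$ makes sense, and we fix $h\in H^{s}(\T^d;\C)$ with $s>0$.

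First I would treat the summand $T_{a}\circ\mathcal{Q}(b)[h]$. Set $g:=\mathcal{Q}(b)[h]$; by \eqref{calma4} one has $g\in H^{s+\rho}(\T^d;\C)$ with $|g|_{s+\rho}\lessdot_{s,\rho}|b|_{s_0+\rho}|h|_{s}$. Applying Lemma \ref{azioneSimboo}$(i)$ with the regularity index $s+\rho$ gives
\[
|T_{a}\,g|_{s+\rho}\lessdot_{s+\rho,s_0}|a|_{s_0}\,|g|_{s+\rho}\lessdot_{s,\rho,s_0}|a|_{s_0}\,|b|_{s_0+\rho}\,|h|_{s}\,,
\]
where in the last step we used $\rho\ge 0$ to absorb the factor $C^{\max\{s+\rho,s_0\}}\le C^{\rho}\,C^{\max\{s,s_0\}}$ into the notation $\lessdot_{s,\rho,s_0}$. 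This is precisely the bound for the first term of \eqref{calma3}.

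For the summand $\mathcal{Q}(b)\circ T_{a}[h]$ I would compose in the opposite order: by Lemma \ref{azioneSimboo}$(i)$ (now with index $s$) one has $T_{a}h\in H^{s}(\T^d;\C)$ with $|T_{a}h|_{s}\lessdot_{s,s_0}|a|_{s_0}|h|_{s}$, and then \eqref{calma4} applied to $\mathcal{Q}(b)$ evaluated at $T_{a}h$ yields
\[
|\mathcal{Q}(b)[T_{a}h]|_{s+\rho}\lessdot_{s,\rho}|b|_{s_0+\rho}\,|T_{a}h|_{s}\lessdot_{s,\rho,s_0}|b|_{s_0+\rho}\,|a|_{s_0}\,|h|_{s}\,.
\]
Chaining the two displays gives the second half of \eqref{calma3}, completing the proof. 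There is no genuine obstacle in this argument; the only point that requires a little care is bookkeeping of the constants, namely checking that evaluating Lemma \ref{azioneSimboo}$(i)$ at regularity $s+\rho$ rather than $s$ produces only an extra $\rho$-dependent exponential factor, which is harmless under the convention $\lessdot_{s,\rho,s_0}$.
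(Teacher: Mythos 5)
Your argument is correct and is exactly the paper's proof: the authors also obtain \eqref{calma3} by composing the action bound \eqref{actionSob} for $T_a$ with the hypothesis \eqref{calma4} on $\mathcal{Q}(b)$, in the two possible orders. Your version merely spells out the bookkeeping of indices and constants, which the paper leaves implicit.
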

\begin{proof}
The bound \eqref{calma3} follows by combining estimates 
\eqref{calma4} and \eqref{actionSob}.
\end{proof}

For $N\geq1$, $a\in H^{s}(\T^d;\C)$ we define the
operators 
$T^{\leq N}_{a}$ and $T_{a}^{>N}$ as (recall Definition \ref{def:projectors})
\begin{equation}\label{def:truncat}
T_{a}^{\leq N}=T_{a}\circ \Pi_{N}\,,\qquad T_{a}^{>N}:=T_{a}\circ\Pi_{N}^{\perp}=
T_{a}^{>N}\circ(\uno-\Pi_{N})\,. 
\end{equation}
We now show that $T_{a}^{\leq N}$ is a \emph{regularizing} operator,
while the operator $T_{a}^{>N}$ ``can transform'' smoothing effects into smallness.
More precisely we have the following.
\begin{lemma}{\bf (Truncations).}\label{danguard}
Let $s_0>d/2$, $m\geq 1$, $N\geq R$ (see \eqref{def:japjapModificato})
and $a\in H^{s_0}(\T^{d};\C)$.
Then for any $s>0$ one has
\begin{equation}\label{calma20}
|T_{a}^{\leq N}h|_{s+\beta}\lessdot_{s,s_0} N^{\beta}|a|_{s_0}|h|_{s}\,,
\qquad 
\forall \, h\in H^{s}(\T^{d};\C)\,,\;\; \beta\geq0\,.
\end{equation}
Moreover (recall \eqref{langlesimbolo})
\begin{align}
|T_{a}^{>N}\circ \langle D\rangle^{-m}h|_{s+m}
&\lessdot_{s,s_0} R^m |a|_{s_0}|h|_{s}\,,
\label{calma21}
\\
|T_{a}^{>N}\circ \langle D\rangle^{-m}h|_{s+m-1}
&\lessdot_{s,s_0} R^m N^{-1} |a|_{s_0}|h|_{s}
\label{calma22}
\end{align}
for any $h\in H^{s}(\T^{d};\C)$.
The same holds for the operator $\langle D\rangle^{-m}\circ T_{a}^{>N}$.
\end{lemma}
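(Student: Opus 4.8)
The plan is to prove each of the three estimates by the same Fourier-side computation used throughout Section \ref{sec:funcsett}, unwinding the definition \eqref{def:truncat} and exploiting the two key features of the cutoff: the frequency equivalence \eqref{equixieta} valid whenever $\chi_\epsilon(|j-\eta|/\langle\eta\rangle)\neq 0$, and the projectors' support properties. For \eqref{calma20}, I would write $T_a^{\leq N}h = T_a(\Pi_N h)$ and note that in the bilinear sum only frequencies $\eta$ with $|\eta|\leq N$ contribute; combined with \eqref{equixieta} this gives $|j|\lesssim N$, hence $\jjap{j}\lesssim N$ and one may replace $\jjap{j}^{s+\beta}$ by $N^\beta\jjap{\eta}^{s}$ up to a harmless loss. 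The rest is exactly the Young/Cauchy--Schwarz argument of \eqref{natale2}--\eqref{cespuglio1}, yielding $|T_a^{\leq N}h|_{s+\beta}\lessdot_{s,s_0}N^\beta|a|_{s_0}|h|_s$.

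For \eqref{calma21}, I would first observe that by Remark \ref{rmk:facile} one has $\jjap{j}\leq R\langle j\rangle$, so on the $j$-side $\jjap{j}^{m}\leq R^m\langle j\rangle^{m}$, which exactly compensates the $\langle D\rangle^{-m}$ that has already been applied to $h$: thus $T_a^{>N}\circ\langle D\rangle^{-m}$ maps $H^s$ to $H^{s+m}$ with a gain of $R^m$, and the argument is again that of Lemma \ref{azioneSimboo}(i), the projector $\Pi_N^\perp$ playing no role here beyond restricting the summation. For the sharper bound \eqref{calma22}, the point is that $T_a^{>N}$ only sees output frequencies $j$ with $|j|>N$ (since $\Pi_N^\perp$ could be pulled out as in \eqref{def:truncat}, or equivalently the input-frequency restriction $|\eta|>N$ together with the equivalence \eqref{equixieta} forces $|j|\gtrsim N$); on that range $\langle j\rangle^{-1}\lesssim N^{-1}$, so one factor of $\langle j\rangle$ from the $\langle D\rangle^{-m}$ can be traded for $N^{-1}$, losing only $\jjap{j}^{s+m-1}$ instead of $\jjap{j}^{s+m}$. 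Then one proceeds as before. The statement for $\langle D\rangle^{-m}\circ T_a^{>N}$ is symmetric: now it is the $j$-side that carries $\langle j\rangle^{-m}$ directly, and on $|j|>N$ the same $N^{-1}$ gain is available; alternatively one takes adjoints using the adjoint rule $T_a^* = T_{\bar a}$ recorded above and the fact that $\Pi_N$, $\Pi_N^\perp$, $\langle D\rangle^{-m}$ are self-adjoint.

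In all three cases the proof reduces, after the elementary bookkeeping with $\jjap{\cdot}$ versus $\langle\cdot\rangle$ and the frequency localizations, to Young's inequality for convolutions $\ell^1\star\ell^2\subset\ell^2$ followed by the Cauchy--Schwarz estimate $\|\widehat a\|_{\ell^1}\leq \mathtt{c}(s_0)|a|_{s_0}$ of \eqref{cespuglio1}, precisely as in Lemma \ref{azioneSimboo}; so no genuinely new idea is needed. The only mild subtlety — and the step I would be most careful with — is the passage from the input-frequency truncation to the output-frequency truncation in \eqref{calma22}: one must make sure the cutoff $\chi_\epsilon(|j-\eta|/\langle\eta\rangle)$ together with $|\eta|>N$ really does force $|j|\geq cN$ for an absolute $c>0$, which follows from the lower bound $(1-\tilde\epsilon)|\eta|\leq|j|$ in \eqref{equixieta} with $\tilde\epsilon<2/5$; equivalently, one invokes the identity $T_a^{>N}=T_a^{>N}\circ(\uno-\Pi_N)$ combined with the fact that $\chi_\epsilon$-paraproduct composed with $\Pi_N^\perp$ is again essentially supported on $|j|\gtrsim N$. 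Once that localization is in hand, replacing one power of $\langle j\rangle$ by $N$ is immediate and the claimed $N^{-1}$ is obtained.
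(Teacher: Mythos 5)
Your proposal is correct and rests on exactly the same mechanism as the paper's proof: the frequency equivalence enforced by the cutoff, the restriction $|\eta|\leq N$ (resp. $|\eta|>N$) coming from $\Pi_N$ (resp. $\Pi_N^\perp$), and the bound $\jjap{\cdot}\leq R\langle\cdot\rangle$ to absorb $\langle D\rangle^{-m}$. The only difference is presentational: the paper never reopens the Fourier sums but simply chains Lemma \ref{azioneSimboo}, Lemma \ref{interopolo} (giving $|\Pi_N h|_{s+\beta}\leq N^{\beta}|h|_s$ and $|\Pi_N^{\perp}h|_{s-1}\leq N^{-1}|h|_s$) and the commutator estimate \eqref{cespuglio3}, whereas you rederive these bounds directly at the level of the convolution kernels.
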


\begin{proof}
Let us check the first bound for some $\beta\geq0$.
One has
\[
|T_{a}^{\leq N}h|_{s+\beta}
\stackrel{\eqref{def:truncat},\eqref{actionSob}}{\lessdot_{s,s_0}}
|a|_{s_0}|\Pi_{N}h|_{s+\beta}
\lessdot_{s,s_0}
N^{\beta}|a|_{s_0}|h|_{s}\,,
\]
where we used the first bound in Lemma \ref{interopolo}.
This implies  \eqref{calma20}. Moreover
we have
\[
\begin{aligned}
|T_{a}^{>N}\circ \langle D\rangle^{-m}h|_{s+m-1}
&\stackrel{\eqref{def:truncat}}{=}
|T_{a}\circ \langle D\rangle^{-m}(\Pi_{N}^{\perp}h)|_{s+m-1}
\\&\stackrel{\eqref{cespuglio3}}{\lessdot_{s,s_0}}
R^m|a|_{s_0}|\Pi_{N}^{\perp}h|_{s-1}
\lessdot_{s,s_0}
R^m N^{-1}|a|_{s_0}|h|_{s}\,,
\end{aligned}
\]
where we also used 
Lemma \ref{interopolo}.
This proves the \eqref{calma22}.
The bound \eqref{calma21} follows trivially.
\end{proof}

\section{Paralinearization of NLS and nonlinear estimates}\label{sec:paraNLS}

In this section we provide paralinearization of the nonlinearity 
$|u|^{2p}u$ of the equation \eqref{QLNLS}.
We first give the following Definition.
\begin{definition}{\bf (p-admissible).}\label{def:pAdmissible}
Let $\rho\geq0$, $s_0>d/2$, 
$p=k/2$ with  $k\in \N$.
We say that  a map 
is $(p,\rho)$-\emph{admissible} if 
\begin{equation*}
\begin{aligned}
\mathcal{Q} \; :\; B_{1}(H^{s_0+\rho}(\T^{d};\C))&\longrightarrow 
\mathcal{L}\big(H^{s}(\T^{d};\C); H^{s+\rho}(\T^{d};\C)\big)\,,\;\;\;s>0
\\&
\!\!\!\!w \;\;\mapsto\;\; \mathcal{Q}(w)
\end{aligned}
\end{equation*}
and 
there exists a constant $C:=C(\rho)>0$ such that 
for any $s>0$ and 
$w\in B_1(H^{s_0+\rho}(\T^{d};\C))$
one has
\begin{equation}\label{mortenera}
|\mathcal{Q}(w)h|_{s+\rho}\leq C^{2p\max\{s,s_0\}} |w|_{s_0+\rho}^{2p} |h|_{s}\,,
\qquad 
\forall \, h\in H^{s}(\T^{d};\C)\,. 
\end{equation}
We say that a map $\mathcal{Q}$ is $p$-\emph{admissible}
if for any $\rho\geq0$ it is $(p,\rho)$-admissible.
\end{definition}
The key result of this section is the following.

\begin{proposition}\label{prop:paralinp}
Let  $p\geq 1$, $ s_0>d/2$ and $u\in H^{s_0}$. One has
\begin{equation}\label{paralin:gradop}
|u|^{2p}u=(p+1)T_{|u|^{2p}}[u]+pT_{|u|^{2(p-1)}u^2}[\bar{u}]+\mathcal{Q}_1^{(p)}(u)[u]
+\mathcal{Q}_2^{(p)}(u)[\bar{u}]\,,
\end{equation}
where $\mathcal{Q}_{j}^{(p)}$, $j=1,2$, are regularizing operators
which are $p$-admissible according to Definition \ref{def:pAdmissible}.
In particular, for any $\rho\geq0$, if $u\in H^{s}$ with $s\geq s_0+\rho$
 then
 \[
 |\mathcal{Q}_j^{(p)}(u)[u]|_{s+\rho}\lessdot_{s,\rho} |u|_{s_0+\rho}^{2p}|u|_{s}\,,\;\;\; j=1,2\,.
 \]
\end{proposition}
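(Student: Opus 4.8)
The plan is to paralinearize the map $u \mapsto |u|^{2p}u$ by writing $|u|^{2p}u = (u\bar u)^p u$ and repeatedly applying the product decomposition of Lemma \ref{lem:prodotto}, tracking which factor carries the ``high'' derivative and collecting all the lower-order contributions into the remainders $\mathcal{Q}_j^{(p)}$. The natural way to organize this is by induction on $p$ (or equivalently on $k=2p$), where the base case $p=0$ is trivial ($u = u$, no remainder) and the inductive step multiplies by one more factor of $|u|^2 = u\bar u$.

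\textbf{Inductive step.} Suppose $|u|^{2(p-1)}u = A\,T_{|u|^{2(p-1)}}[u] + B\,T_{|u|^{2(p-2)}u^2}[\bar u] + (\text{$p$-admissible remainders})$ with the appropriate combinatorial constants $A=p$, $B=p-1$ (for $p\ge 2$; for $p=1$ the second term is absent). Multiply by $|u|^2 = u\bar u$. For each term I apply Lemma \ref{lem:prodotto} to move the product inside, keeping the para-product acting on the highest-regularity slot: e.g. $u\bar u \cdot T_{|u|^{2(p-1)}}[u]$ is rewritten, via the product rule, as $T_{|u|^{2p}}[u]$ plus $T_{u}(\bar u\, T_{|u|^{2(p-1)}}[u])$-type terms and a $\mathcal{Q}$-remainder; the former is the desired leading term while the latter are handled using Lemmata \ref{lem:composition}, \ref{compoparapara}, \ref{compoparasmooth} on compositions of para-products and remainders. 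A bookkeeping of the combinatorial coefficients then produces exactly $(p+1)T_{|u|^{2p}}[u] + p\,T_{|u|^{2(p-1)}u^2}[\bar u]$: the $p+1$ arises because the factor $|u|^{2p}$ can be ``hit'' in $p+1$ ways (the new $u$, or any of the $p$ copies of $u$ already inside), and similarly $p$ ways produce the $u^2\bar u$-type symbol. Crucially, all para-products that appear have symbols of the form (monomial in $u,\bar u$ of degree $2p$), and all remainders are built from the basic remainder $\mathcal{F}$ of \eqref{mappaQQ} composed/multiplied with para-products, so they inherit the form required in Definition \ref{def:pAdmissible}.

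\textbf{The estimates.} Once the algebraic identity \eqref{paralin:gradop} is established, the $p$-admissibility bound \eqref{mortenera} for $\mathcal{Q}_j^{(p)}$ follows by combining the tame estimate Lemma \ref{lem:tametame} (to estimate $|u|^{2p}$-type symbols in $H^{s_0+\rho}$ by $|u|_{s_0+\rho}^{2p}$, picking up a constant $\mathtt{C}^{s_0+\rho}$ to some power $\sim 2p$) with the action bounds \eqref{actionSob}, \eqref{eq:paraproduct22 je}, \eqref{calma1}, \eqref{calma3} on para-products and remainders. One must be a little careful that the constants come out as $C^{2p\max\{s,s_0\}}$ rather than worse: each of the finitely many ($p$-dependent number of) terms contributes a factor of the shape $\lessdot_{s,\rho}$, i.e. $\le C^{\max\{s,s_0\}}$ per application, and the symbol estimates contribute $\le \mathtt{C}^{2p(s_0+\rho)}|u|_{s_0+\rho}^{2p}$; absorbing $\rho$-dependence into $C=C(\rho)$ and using $\max\{s,s_0\}\ge$ the relevant exponents gives the stated form. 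The last displayed inequality is then just \eqref{mortenera} specialized to $h=u$ or $h=\bar u$ (note $|\bar u|_s = |u|_s$) together with $|u|_{s_0+\rho}\le |u|_s$.

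\textbf{Main obstacle.} The genuinely delicate point is not any single estimate but the \emph{combinatorial bookkeeping}: keeping track, through the induction, of exactly which factor carries the top derivative so that the leading para-product coefficients come out as the clean integers $p+1$ and $p$, while being sure that every ``cross term'' (para-product acting on something that already contains a para-product or a smoothing remainder) really does land in the $p$-admissible class with the right power of the constant. A clean way to avoid an explosion of cases is to prove a slightly more general statement by induction — e.g. that $|u|^{2q}v$ paralinearizes as $(q+1)T_{|u|^{2q}}v + q\,T_{u^2|u|^{2(q-1)}}\bar v + (p\text{-admissible})$ for auxiliary $v\in\{u,\bar u\}$ — and then set $v=u$, $q=p$ at the end. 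Everything else is a routine assembly of the lemmata already proved in Section \ref{sec:funcsett}.
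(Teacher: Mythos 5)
Your proposal follows essentially the same route as the paper: induction on $p$, multiplying by $|u|^2=u\bar u$ at each step, decomposing via Lemma \ref{lem:prodotto}, recombining para-products with Lemmata \ref{compoparapara}--\ref{compoparasmooth}, and reading off the coefficients $(p+1,p)$ from the bookkeeping of which factor carries the high frequencies. The one point you treat more loosely than the paper is the uniformity in $p$ of the admissibility constant $C^{2p\max\{s,s_0\}}$ — the paper closes the induction by showing the new remainders at step $p+1$ obey the bound with the \emph{same} $\mathtt M$, chosen once and for all large with respect to the $\rho$-dependent constants — but your sketch correctly identifies this issue and the stated absorption argument does work.
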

For the sake of clarity we 
first need a basic \emph{tame} estimate.
\begin{lemma}\label{lem:confort2}
For any $s\geq s_0>d/2$ there exists $\mathtt{M}>0$ such that,
for all integer $p\geq1$ one has
\begin{equation}\label{confort2}
| |u|^{2p}u |_{s}\leq \mathtt{M}^{2ps}|u|_{s_0}^{2p}|u|_{s}\,,
\qquad
| |u|^{2p} u|_{s}\leq \mathtt{M}^{2p s_0}
(\mathtt{M}/R)^{2p(s-s_0)}|u|_{s}^{2p+1}\,,
\end{equation}
for any $u\in H^{s}(\T^d;\C)$.
\end{lemma}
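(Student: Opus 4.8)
The plan is to view $|u|^{2p}u$ as the product of the $2p+1$ elementary factors obtained by taking $p+1$ copies of $u$ and $p$ copies of $\bar u$, and to iterate the tame estimate of Lemma~\ref{lem:tametame}, tracking the constants carefully. Two preliminary observations. First, since Lemma~\ref{lem:tametame} only fixes \emph{some} absolute constant, I may assume without loss of generality that $\mathtt{C}\ge 2$; in particular $\mathtt{C}^{s_0}\le\mathtt{C}^{s}$ whenever $s\ge s_0$. Second, the norm $|\cdot|_{\sigma}$ of Definition~\ref{def:equinorm} is invariant under complex conjugation: since $\widehat{\bar u}(j)=\overline{\widehat{u}(-j)}$ and $\jjap{j}=\jjap{-j}$, one has $|\bar u|_{\sigma}=|u|_{\sigma}$ for every $\sigma$.

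The first step I would carry out is a multilinear tame bound: for every $s\ge s_0>d/2$, every $n\ge 1$ and all $f_1,\dots,f_n\in H^{s}(\T^{d};\C)$,
\[
|f_1\cdots f_n|_{s}\le (2\mathtt{C}^{s})^{n-1}\sum_{i=1}^{n}|f_i|_{s}\prod_{j\ne i}|f_j|_{s_0}\,.
\]
This is proved by induction on $n$. The cases $n=1$ is trivial and $n=2$ is \eqref{tameestimate1} (after the crude bound $\mathtt{C}^{s}\le 2\mathtt{C}^{s}$). For the inductive step one writes $f_1\cdots f_{n+1}=(f_1\cdots f_n)f_{n+1}$ and applies Lemma~\ref{lem:tametame}; then one estimates $|f_1\cdots f_n|_{s}$ by the inductive hypothesis and $|f_1\cdots f_n|_{s_0}$ by the plain algebra bound $|f_1\cdots f_n|_{s_0}\le(2\mathtt{C}^{s_0})^{n-1}\prod_i|f_i|_{s_0}\le(2\mathtt{C}^{s})^{n-1}\prod_i|f_i|_{s_0}$ (obtained by iterating \eqref{tameestimate1} at $s=s_0$). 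Collecting terms and using $\mathtt{C}^{s}(2\mathtt{C}^{s})^{n-1}\le(2\mathtt{C}^{s})^{n}$ closes the induction.

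Applying this with $n=2p+1$, the $f_i$ being $u$ or $\bar u$, and using the conjugation invariance noted above (so every $|f_i|_{s}$ equals $|u|_{s}$ and every $|f_j|_{s_0}$ equals $|u|_{s_0}$), gives
\[
\big||u|^{2p}u\big|_{s}\le(2p+1)(2\mathtt{C}^{s})^{2p}|u|_{s_0}^{2p}|u|_{s}\,.
\]
Since $2p+1\le 2^{2p}$ for $p\ge 1$, the prefactor is bounded by $2^{4p}\mathtt{C}^{2ps}=(4\mathtt{C}^{s})^{2p}$. Setting $\mathtt{M}:=16\,\mathtt{C}$ and using $s\ge s_0>d/2\ge\tfrac12$ — so that $(\mathtt{M}/\mathtt{C})^{s}>(\mathtt{M}/\mathtt{C})^{1/2}=4$, hence $4\mathtt{C}^{s}\le\mathtt{M}^{s}$ — one obtains $(4\mathtt{C}^{s})^{2p}\le\mathtt{M}^{2ps}$, which is the first inequality in \eqref{confort2}. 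The second one then follows by inserting the scaling estimate $|u|_{s_0}\le R^{-(s-s_0)}|u|_{s}$ of Remark~\ref{scaling property}:
\[
\big||u|^{2p}u\big|_{s}\le\mathtt{M}^{2ps}R^{-2p(s-s_0)}|u|_{s}^{2p+1}=\mathtt{M}^{2ps_0}(\mathtt{M}/R)^{2p(s-s_0)}|u|_{s}^{2p+1}\,.
\]

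As for where the difficulty lies: there is no genuine analytic obstacle here — the only delicate point is the bookkeeping, namely making sure that the polynomial factor $2p+1$ and the powers of $2$ accumulated while iterating the (binomial-type) tame inequality get absorbed into a clean term of the form $\mathtt{M}^{2ps}$ with $\mathtt{M}$ independent of $s$, $s_0$ and $p$. This is precisely what forces the use of the lower bound $s>\tfrac12$ and the (non-optimal) choice $\mathtt{M}=16\mathtt{C}$.
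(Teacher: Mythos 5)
Your proof is correct and follows essentially the same route as the paper: both iterate the tame estimate of Lemma~\ref{lem:tametame} over the $2p+1$ factors (the paper via an induction on $p$ in Lemma~\ref{stimaNonlin}, you via a slightly more general multilinear induction on the number of factors), absorb the accumulated constants into $\mathtt{M}^{2ps}$, and then obtain the second bound from the scaling inequality \eqref{confort3}. The bookkeeping ($2p+1\le 2^{2p}$, the choice $\mathtt{M}=16\mathtt{C}$, and the use of $s>d/2\ge 1/2$) is sound.
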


The result above  
 is a consequence of the following Lemma.
\begin{lemma}{\bf (Basic nonlinear estimates).}\label{stimaNonlin}
For any $s\geq s_0>d/2$ there exists an absolute constant $\mathtt{M}>0$ such that,
for all integers $q_1,q_2\geq0$, $q_1+q_2\geq1$,
\begin{equation}\label{confort}
| u^{q_1}\bar{u}^{q_2} |_{s}\leq \mathtt{M}^{(q_1+q_2-1) s}|u|_{s_0}^{q_1+q_2-1}|u|_{s}\,,
\qquad 
\forall\; u\in H^{s}(\T^d;\C)\,.
\end{equation}
\end{lemma}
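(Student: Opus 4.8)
The plan is to deduce \eqref{confort} from the tame estimate of Lemma \ref{lem:tametame} by a ``peeling'' induction on the number of factors. First I would note that the norm $|\cdot|_s$ is invariant under complex conjugation: since $\widehat{\bar u}(j)=\overline{\widehat u(-j)}$ and $\jjap{j}=\jjap{-j}$, one has $|\bar u|_s=|u|_s$ and $|\bar u|_{s_0}=|u|_{s_0}$ for every $s$. Hence $u^{q_1}\bar u^{q_2}$ is a product $v_1\cdots v_n$ with $n:=q_1+q_2$ and each $v_i\in\{u,\bar u\}$, and it is enough to bound such products. I would also record that, taking $a=b$ in Lemma \ref{lem:tametame}, $H^s(\T^d;\C)$ is a Banach algebra for $s>d/2$, so all the monomials below belong to $H^s$ and every norm written is finite.

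Next I would prove, by induction on $n\geq1$ and simultaneously for all $s\geq s_0$, the bound
\begin{equation*}
|v_1\cdots v_n|_s\ \leq\ \big(2\,\mathtt{C}^s\big)^{\,n-1}\,|u|_{s_0}^{\,n-1}\,|u|_s\,,
\end{equation*}
$\mathtt{C}\geq1$ being the constant from Lemma \ref{lem:tametame}. The base case $n=1$ is an equality. For the inductive step I would split off a single factor, $v_1\cdots v_n=v_1\cdot w$ with $w:=v_2\cdots v_n$, apply Lemma \ref{lem:tametame} once to get $|v_1 w|_s\leq\mathtt{C}^s\big(|v_1|_s|w|_{s_0}+|v_1|_{s_0}|w|_s\big)$, and then feed in the inductive bound for the $(n-1)$-fold product $w$ \emph{both} at regularity $s$ (for the summand $|w|_s$) and at regularity $s_0$ (for the summand $|w|_{s_0}$). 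Using $|v_i|_s=|u|_s$, $|v_i|_{s_0}=|u|_{s_0}$ and $(2\mathtt{C}^{s_0})^{n-2}\leq(2\mathtt{C}^s)^{n-2}$ (since $\mathtt{C}\geq1$ and $s\geq s_0$), the two resulting summands are each at most $(2\mathtt{C}^s)^{n-2}|u|_{s_0}^{n-1}|u|_s$, and the leftover factor $2\,\mathtt{C}^s$ --- one $\mathtt{C}^s$ from the tame estimate, one $2$ from adding the two summands --- raises the exponent from $n-2$ to $n-1$. Finally \eqref{confort} follows with $\mathtt{M}:=4\mathtt{C}$: since $s>d/2\geq 1/2$ one has $4^s>2$, so $2\,\mathtt{C}^s\leq\mathtt{M}^s$ and therefore $(2\mathtt{C}^s)^{q_1+q_2-1}\leq\mathtt{M}^{(q_1+q_2-1)s}$.

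I do not expect a real obstacle here; the only delicate point is keeping $\mathtt{M}$ independent of $q_1,q_2$ (and of $s$), and this is precisely what the peeling scheme secures: estimating the $n$-fold product by removing one factor at a time uses the tame inequality exactly $n-1$ times, so the factor $\mathtt{C}^s$ appears to the power $n-1$, which matches the target power $(n-1)s$ of $\mathtt{M}$; a dyadic splitting would instead generate a worse dependence on $n$. All the remaining steps are the routine bookkeeping with powers of $2$ and $\mathtt{C}$ indicated above.
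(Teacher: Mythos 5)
Your proof is correct and follows essentially the same route as the paper's: an induction on the number of factors that applies the tame estimate of Lemma \ref{lem:tametame} once per step, feeding the inductive bound in at both regularities $s$ and $s_0$, and absorbing the accumulated constants into $\mathtt{M}^{(q_1+q_2-1)s}$. The only cosmetic differences are that you peel off one factor at a time (the paper peels off $u^2$, proceeding in steps of two through the monomials $u^{2p}u$) and that you handle the conjugate factors explicitly via $|\bar u|_s=|u|_s$, which the paper leaves as ``the general case follows similarly.''
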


\begin{proof}
We give the proof for the case $q_1=2p+1$, $q_2=0$, $p\geq0$, 
 i.e. we estimate $u^{2p+1}$.
The general case follows similarly.
We show, by induction on $p\geq0$,
that one has
\begin{equation}\label{stimaInd}
|u^{2p}u|_{s} \leq 8^{p}\mathtt{K}^{2p s}|u|_{s_0}^{2p}|u|_{s}\,,
\end{equation}
for some absolute $\mathtt{K}>0$.
The case $p=0$ is trivial.
Let us consider the case $p=1$.
First of all we have that estimate \eqref{tameestimate1}
for $s=s_0$ and $u=v$ reduces to
\[
|u^2|_{s_0}=|u\bar{u}|_{s_0}\leq 2\mathtt{C}^{s}|u|_{s_0}^{2}\,. 
\]
Then 
\[
|u^{2}u|_{s}\leq 2\tC^{s}\big( |u^{2}|_{s_0}|u|_{s}+
|u^{2}|_{s}|u|_{s_0}\big)
\leq 
2\tC^{s}\big( 
2\mathtt{C}^{s_0}+2\mathtt{C}^{s}
\big)|u|_{s_0}^{2}|u|_{s}\,,
\]
which implies \eqref{stimaInd} with $p=1$.
Now assume that \eqref{stimaInd} holds for some $p\geq2$.
Therefore
\[
\begin{aligned}
|u^{2(p+1)}u|_{s}&=
|u^{2p}\cdot u\cdot u^2|_{s}
\stackrel{\eqref{tameestimate1}}{\leq} 
2\tC^{s}\big( |u^{2p}u|_{s_0}|u^2|_{s}+
|u^{2p}u|_{s}|u^{2}|_{s_0}\big)
\\&
\stackrel{\eqref{stimaInd}}{\leq }
2\tC^{s}\Big(
8^{p+1}2\mathtt{C}^{s} \mathtt{K}^{2ps_0}+2\mathtt{C}^{s_0}8^{p}\mathtt{K}^{2ps}
\Big)
|u|_{s_0}^{2(p+1)}|u|_{s}
\leq 8^{p+1} \mathtt{K}^{2(p+1)s}|u|_{s_0}^{2(p+1)}|u|_{s}\,,
\end{aligned}
\]
which is \eqref{stimaInd} for $p\rightsquigarrow p+1$.
Then \eqref{confort} follows.
\end{proof}

\begin{proof}[{\bf Proof of Lemma \ref{confort2}}]
The bounds in \eqref{confort2} follow by \eqref{confort} together with \eqref{confort3}.
\end{proof}

\begin{proof}[{\bf Proof of Proposition \ref{prop:paralinp}}]
Following the ideas of Lemmata \ref{extensionResto}, \ref{lem:prodotto}.
we reason by induction on $p$. 

\noindent
{\bf Case:} $p=1$.
We claim that \textit{
for   $s\geq s_0>d/2$ one has
\begin{equation}\label{paralin:cubic}
|u|^{2}u=2T_{|u|^{2}}u+T_{u^2}\bar{u}+\mathcal{Q}_1^{(1)}(u)[u]+\mathcal{Q}_2^{(1)}(u)[\bar{u}]\,,
\end{equation}
where $\mathcal{Q}_{j}^{(1)}(u)$, $j=1,2$, are $1$-admissible operators.
}

By applying 
Lemma \ref{lem:prodotto} with $a=|u|^{2}$ and $b=u$
we get
\begin{equation}\label{calma11}
|u|^{2}u=T_{|u|^{2}}u+T_{u}\big[|u|^{2}\big]+\mathcal{R}_1(u)[u]\,,
\end{equation}
where $\mathcal{R}_1(u)[\cdot]:= \cF(|u|^2)[\cdot]$ is of the form \eqref{mappaQQ} and 
satisfies the properties in
 Lemma \ref{extensionResto}.
 In particular, for any $\rho\geq 0$ and any $w\in H^{s_0+\rho}$, estimate \eqref{stimaprecisaQQ}
 implies 
 \[
 |\mathcal{R}_1(w)[h]|_{s+\rho}{\lessdot_{s,\rho}}
|w^2|_{s_0+\rho}|h|_{s}
\stackrel{\eqref{tameestimate1}}{\lessdot_{s,\rho}}
|w|^2_{s_0+\rho}|h|_{s}\,,
 \]
 for any $h\in H^{s}(\T^{d};\C)$, $s>0$, i.e. $\mathcal{R}_1$ is $1$-admissible.

Similarly  by Lemmata \ref{extensionResto}, \ref{lem:prodotto} we deduce that 
\[
|u|^2=T_{\bar{u}}u+T_{u}\bar{u}+\mathcal{R}_2(u)[u]
\]
where $\mathcal{R}_2$ is $(1/2)$-admissible.
Hence the \eqref{calma11} becomes
\[
\begin{aligned}
|u|^2u&=T_{|u|^2}u+T_{u}\circ T_{\bar{u}}u+T_{u}\circ T_{u}\bar{u}
+T_{u}\circ \mathcal{R}_2(u)[u]+
\mathcal{R}_1(u)[u]\,.
\end{aligned}
\]
By Lemma \ref{compoparapara} we deduce
\[
T_{u}\circ T_{\bar{u}}u+T_{u}\circ T_{u}\bar{u}=
T_{|u|^2}u+T_{u^2}\bar{u}+\mathcal{R}_3(u)[u]+\mathcal{R}_4(u)[\bar{u}]
\]
where, 
by \eqref{calma1}, the remainders $\mathcal{R}_j(u)$, $j=3,4$, satisfy
the estimate \eqref{mortenera}, i.e. they are $1$-admissible.
Since $\mathcal{R}_2$ is $(1/2)$-admissible, estimate \eqref{mortenera} with $p=\tfrac{1}{2}$
guarantees (see \eqref{calma4})
that Lemma  \ref{compoparasmooth} applies to the 
operator
$\mathcal{R}_{5}(u):=T_u\circ \mathcal{R}_2(u)[\cdot]$.
Then the bound \eqref{calma3} (with $a\rightsquigarrow u$ and 
$\mathcal{Q}\rightsquigarrow\mathcal{R}_2(u)$)
implies \eqref{mortenera} for the operator $\mathcal{R}_5$.
The discussion above implies the 
paralinearization formula 
\eqref{paralin:cubic} and hence the thesis.

\noindent
{\bf Case:} $p\geq2$.
Assume now that formula \eqref{paralin:gradop} holds true for some $p\geq2$.
In particular (to fix the notation)
by Def. \ref{def:pAdmissible} we have that, for any $\rho\geq0$ there is an constant
$\mathtt{M}=\mathtt{M}(\rho)>0$ (depending only on $\rho$) such that for any 
 $w\in H^{ s_0+\rho}$ 
 \begin{equation}\label{stimaindu}
 |\mathcal{Q}_{j}^{(p)}(w)h|_{s+\rho}\leq \mathtt{M}^{2p\max\{s,s_0\}}|w|_{s_0+\rho}^{2p}|h|_{s}\,,
 \qquad
 \forall h\in H^{s}(\T^{d};\C)\,, s>0\,,\;\;\; j=1,2\,.
 \end{equation}
 We shall prove that $|u|^{2(p+1)}u=|u|^{2}(|u|^{2p}u)$ can be written as in 
 \eqref{paralin:gradop} with some remainders satisfying \eqref{stimaindu}
 with $p\rightsquigarrow p+1$
but with the same constant $\mathtt{M}>0$.
To do this we reason as follows.
We apply the para-product Lemmata \ref{extensionResto}, \ref{lem:prodotto}
with $a=|u|^{2p}u$ and $b=|u|^{2}$. So we get
\begin{equation}\label{mortenera2}
\begin{aligned}
|u|^{2(p+1)}u=T_{|u|^{2}}\big[|u|^{2p}u\big]+T_{|u|^{2p}u}\big[|u|^{2}\big]
+\mathcal{R}^{(1)}(u)\big[|u|^{2}\big]\,,
\end{aligned}
\end{equation}
where $\mathcal{R}^{(1)}(u)=\mathcal{F}(|u|^{2p}u)[\cdot]$ satisfies, for any $\rho\geq0$, 
$w\in H^{s_0+\rho}(\T^{d};\C)$,
\begin{equation}\label{mortenera3}
|\mathcal{R}^{(1)}(w)[h]|_{s+\rho}\leq C_1^{2p\max\{s,s_0\}}|w|_{s_0}^{2p}|w|_{s_0+\rho}|h|_{s}\,,\qquad 
\forall\, h\in H^{s}(\T^{d};\C)\,, \;\;s>0
\end{equation}
for some constant $C_1=C_{1}(\rho)>0$ 
depending only on $\rho$\footnote{Notice that this constant $C_1$ has 
no relation with the constant $\mathtt{M}$
appearing in \eqref{stimaindu}}.
The bound \eqref{mortenera3} follows by \eqref{stimaprecisaQQ} 
with $a=|u|^{2p}u$
and using also \eqref{confort} to estimate the low norm of $a$.
In other words $\mathcal{R}^{(1)}$ is a $(p+\tfrac{1}{2})$-admissible remainder.
Indeed \eqref{mortenera3} implies \eqref{mortenera}
with $p\rightsquigarrow p+1/2$. 
We study each summand in \eqref{mortenera2} separately.

Notice that again by Lemmata \ref{extensionResto}, \ref{lem:prodotto} we deduce
\begin{equation}\label{mortenera4}
|u|^{2}=T_{\bar{u}}[u]+T_{u}[\bar{u}]+\mathcal{R}^{(2)}(u)[u]\,,
\end{equation}
where $\mathcal{R}^{(2)}$ (by estimate \eqref{eq:paraproduct22 je})
is a $\tfrac{1}{2}$-admissible remainder.
Hence, by applying Lemma \ref{compoparasmooth} and using 
\eqref{mortenera4},
we deduce that the third summand $\mathcal{R}^{(1)}(u)\big[|u|^{2}\big]$
in \eqref{mortenera2}
is a $(p+1)$-admissible remainder satisfying \eqref{mortenera} 
with $p\rightsquigarrow p+1$
for some constant $\mathtt{C}_1>0$.
Consider now the second summand in \eqref{mortenera2}. 
By applying Lemmata \ref{compoparapara} and \ref{compoparasmooth}
we get
\begin{equation}\label{mortenera5}
\begin{aligned}
T_{|u|^{2p}u}\big[|u|^{2}\big]&\stackrel{\eqref{mortenera4}}{=}
T_{|u|^{2p}u}\circ T_{\bar{u}}[u]+T_{|u|^{2p}u}\circ T_{u}[\bar{u}]
+T_{|u|^{2p}u}\circ \mathcal{R}^{(2)}(u)[u]
\\&
=T_{|u|^{2(p+1)}}[u]+T_{|u|^{2p}u^2}[\bar{u}]
+\mathcal{R}_1^{(3)}(u)[u]
+\mathcal{R}_2^{(3)}(u)[\bar{u}]
\end{aligned}
\end{equation}
where $\mathcal{R}_{j}^{(3)}$, $j=1,2$, are $(p+1)$-admissible remainders
satisfying \eqref{mortenera} 
with $p\rightsquigarrow p+1$
for some constant $\mathtt{C}_2>0$.
To study the first summand in \eqref{mortenera2}
we use the inductive assumption, i.e. $|u|^{2p}u$ has the form
\eqref{paralin:gradop} with remainders satisfying \eqref{stimaindu}. 
Hence, using Lemmata \ref{compoparapara} and \ref{compoparasmooth},
we can write
\begin{equation}\label{mortenera6}
\begin{aligned}
T_{|u|^{2}}&\big[|u|^{2p}u\big]=
T_{|u|^{2}}\Big[(p+1)T_{|u|^{2p}}[u]+pT_{|u|^{2(p-1)}u^2}[\bar{u}]+\mathcal{Q}_1^{(p)}(u)[u]
+\mathcal{Q}_2^{(p)}(u)[\bar{u}]\Big]
\\&
=(p+1)T_{|u|^{2(p+1)}}[u]+pT_{|u|^{2p}u^2}[\bar{u}]
+\mathcal{R}_1^{(4)}(u)[u]
+\mathcal{R}_2^{(4)}(u)[\bar{u}]
+\widetilde{\mathcal{Q}}_1(u)[u]
+\widetilde{\mathcal{Q}}_2(u)[\bar{u}]
\end{aligned}
\end{equation}
where $\mathcal{R}_{j}^{(4)}(u)$, $j=1,2$, are 
$(p+1)$-admissible remainders 
satisfying \eqref{mortenera} 
with $p\rightsquigarrow p+1$
for some constant $\mathtt{C}_3>0$,
while the operators
\[
\widetilde{\mathcal{Q}}_j(u)[\cdot]:=T_{|u|^{2}}\circ \mathcal{Q}_j^{(p)}(u)[\cdot]\,,
\quad j=1,2\,,
\]
are $(p+1)$-admissible remainders 
satisfying \eqref{mortenera}  (recall the inductive assumption \eqref{stimaindu})
with $p\rightsquigarrow p+1$
and with constant 
$\mathtt{C}_{4}^{2\max\{s,s_0\}} \mathtt{M}^{2p\max\{s,s_0\}}$.
Collecting together 
\eqref{mortenera2}, \eqref{mortenera5} and \eqref{mortenera6}
we can write
\[
|u|^{2(p+1)}u=
(p+2)T_{|u|^{2(p+1)}}[u]+(p+1)T_{|u|^{2p}u^2}[\bar{u}]
+\mathcal{Q}_1^{(p+1)}(u)[u]
+\mathcal{Q}_2^{(p+1)}(u)[\bar{u}]
\]
where, by the discussion above, the remainders
$\mathcal{Q}_j^{(p+1)}$, $j=1,2$, are 
$(p+1)$-admissible
and satisfy, for any $\rho\geq0$, 
$w\in H^{s_0+\rho}(\T^{d};\C)$,
\begin{equation*}
|\mathcal{Q}^{(p+1)}_{j}(w)[h]|_{s+\rho}\leq
\big(
\mathtt{C}_1^{p\mathfrak{d}}+\mathtt{C}_2^{p\mathfrak{d}}+\mathtt{C}_3^{p\mathfrak{d}}+
\mathtt{C}_{4}^{\mathfrak{d}} \mathtt{M}^{p\mathfrak{d}}
\big)
 |w|_{s_0+\rho}^{2(p+1)}|h|_{s}\,,\qquad \forall \, h\in H^{s}(\T^{d};\C)\,,\;\;s>0\,,
\end{equation*}
where $\mathfrak{d}:=2\max\{s,s_0\}$.
Taking $\mathtt{M}=\mathtt{M}(\rho)$ large enough (only with respect 
to the constants $\mathtt{C}_i$ which depend only on $\rho$),
the latter bound implies 
\eqref{stimaindu} with $p\rightsquigarrow p+1$.
This concludes the proof.
\end{proof}

\section{A priori energy estimates}
In this section we provide a priori estimates on the evolution of the 
Sobolev norms of solutions of the equation \eqref{QLNLS}.
In particular from now on we work
on functions $u=u(t,x)$ of time and space satisfying the following 
assumption: 
\begin{hypo}{\bf (Local well-posedness).}\label{hyp:local}
Let $s\geq s_0>d/2$.
There exists $T>0$ such that for any 
$u_0\in H^{s}(\T^{d};\C)$
with $|u_0|_{s_0}\leq 1/2$,
there is a solution
$u(t,x)$
of \eqref{QLNLS}  with $u(0,x)=u_0(x)$ and 
\begin{equation}\label{hypo1}
\begin{aligned}
&u\in C^{0}\big([0,T];H^{s}(\T^{d};\C)\big)\cap
C^{1}\big([0,T];H^{s-2}(\T^{d};\C)\big)\,,
\\&
\sup_{t\in [0,T]}|u(t,\cdot)|_{s_0}\leq 1\,,\qquad 
\sup_{t\in [0,T]}|u(t,\cdot)|_{s}<+\infty\,.
\end{aligned}
\end{equation}
\end{hypo}
From now on we shall fix 
$R:=2\mathtt{M}$ where $\mathtt{M}>0$ is 
the constant  
provided by Lemma \ref{stimaNonlin} 
and we shall fix $T>0$ as the existence time given by 
the hypothesis above.

\subsection{Basic estimates}\label{sec:basicbasic}
One has the following.
\begin{theorem}{\bf (Basic a priori energy estimates).}\label{thm:energyBasic}
Let $s_0>d/2$ and $s_1\geq s_0$.
Then, if $u$ is a solution of \eqref{QLNLS} satisfying  
 Hypothesis \ref{hyp:local}, one has the a priori bound
 \begin{equation}\label{basicStima}
|u(t)|_{s_1}^{2}\leq |u_{0}|_{s_1}^{2}+\mathtt{M}^{2p s_0} 2^{-2p(s_1-s_0)}\int_{0}^{t}
|u(\s)|_{s_1}^{2p+2}d\s\,,\qquad t\in[0,T]\,,
\end{equation}
where $T>0$   is the one given in the local well posedness assumption Hyp. \ref{hyp:local}.
\end{theorem}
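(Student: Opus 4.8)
The plan is to convert \eqref{QLNLS} into a differential inequality for $t\mapsto|u(t)|_{s_1}^2$. Recalling Definition \ref{def:equinorm}, $|u(t)|_{s_1}^2=\sum_{j\in\Z^d}\jjap{j}^{2s_1}|\widehat u(j,t)|^2=(\jjap{D}^{s_1}u,\jjap{D}^{s_1}u)_{L^2}$; by Hypothesis \ref{hyp:local} each coefficient $\widehat u(j,\cdot)$ is of class $C^1$ in time, and rewriting \eqref{QLNLS} (with the $+$ sign) as $u_t=-\ii\Delta u+\ii|u|^{2p}u$ one computes, coefficient by coefficient,
\[
2\,\Re\big(\widehat{u_t}(j)\,\overline{\widehat u(j)}\big)
=2\,\Re\big(\ii|j|^2|\widehat u(j)|^2\big)+2\,\Re\big(\ii\,\widehat{|u|^{2p}u}(j)\,\overline{\widehat u(j)}\big)
=-2\,\Im\big(\widehat{|u|^{2p}u}(j)\,\overline{\widehat u(j)}\big),
\]
the Laplacian dropping out since it only contributes the purely imaginary term $\ii|j|^2|\widehat u(j)|^2$ to each coefficient. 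Summing against $\jjap{j}^{2s_1}$ is legitimate because the differentiated series converges absolutely and uniformly on $[0,T]$: by Lemma \ref{lem:confort2} one has $|u|^{2p}u\in H^{s_1}$, so Cauchy--Schwarz gives $\sum_{j}\jjap{j}^{2s_1}|\widehat{|u|^{2p}u}(j)|\,|\widehat u(j)|\le\big||u|^{2p}u\big|_{s_1}|u|_{s_1}<\infty$. We thus obtain the \emph{energy identity}
\[
\frac{d}{dt}|u(t)|_{s_1}^2=-2\,\Im\big(\jjap{D}^{s_1}(|u|^{2p}u),\,\jjap{D}^{s_1}u\big)_{L^2},\qquad t\in[0,T].
\]

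The second step is to estimate the right-hand side. Cauchy--Schwarz in $L^2$ gives $\big|\tfrac{d}{dt}|u(t)|_{s_1}^2\big|\le 2\,\big||u|^{2p}u\big|_{s_1}\,|u(t)|_{s_1}$, and then I would apply the second bound of Lemma \ref{lem:confort2} with the choice $R=2\mathtt{M}$ fixed just before the statement, so that $(\mathtt{M}/R)^{2p(s_1-s_0)}=2^{-2p(s_1-s_0)}$; this yields $\big||u|^{2p}u\big|_{s_1}\le\mathtt{M}^{2ps_0}2^{-2p(s_1-s_0)}|u|_{s_1}^{2p+1}$, hence
\[
\frac{d}{dt}|u(t)|_{s_1}^2\le 2\,\mathtt{M}^{2ps_0}\,2^{-2p(s_1-s_0)}\,|u(t)|_{s_1}^{2p+2},\qquad t\in[0,T],
\]
and integrating from $0$ to $t$ produces \eqref{basicStima}. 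The spurious factor $2$ is inessential: it is absorbed by enlarging the constant $\mathtt{M}$ of Lemma \ref{stimaNonlin} from the outset, an enlargement that only strengthens the conclusions of Lemmata \ref{stimaNonlin} and \ref{lem:confort2}.

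Since this is the ``basic'' estimate, I do not expect a genuine obstacle; the only point deserving care is the differentiation of $|u(t)|_{s_1}^2$ at the borderline regularity $s_1=s$ (the index in Hypothesis \ref{hyp:local}, for which $u_t$ lies only in $H^{s-2}$, so that the pairing in the energy identity is, as written, merely formal). This is precisely why the computation above is carried out on Fourier coefficients, the equation being substituted \emph{before} differentiating under the summation sign; after this substitution the nonlinear term is finite and controlled by Lemma \ref{lem:confort2}, so no separate mollification of $u$ is needed. Everything else --- the vanishing of the linear term, the two Cauchy--Schwarz estimates, and the tame bound of Lemma \ref{lem:confort2} --- is routine.
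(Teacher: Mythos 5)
Your proof is correct and follows essentially the same route as the paper's: differentiate $|u(t)|_{s_1}^{2}$, observe that the Laplacian contributes only a purely imaginary (skew-adjoint) term, apply Cauchy--Schwarz together with the second bound of Lemma \ref{lem:confort2} for $R=2\mathtt{M}$, and integrate in time. Your treatment of the factor $2$ coming from Cauchy--Schwarz (absorbed by enlarging the absolute constant $\mathtt{M}$, which is harmless since $2ps_0>1$), as well as the Fourier-coefficient justification of differentiating the norm at the borderline regularity, is in fact slightly more careful than the paper's computation, which silently drops that factor.
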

\begin{proof}
We have
\begin{equation*}
\begin{aligned}
\pa_{t}|u(t)|_{s_1}^{2}&\stackrel{\eqref{normaJapJap}}{=}
\pa_{t}(\jjap{D}^{s_1}u,\jjap{D}^{s_1}u)_{L^{2}}
\stackrel{\eqref{QLNLS}}{=}2{\Re} ( \jjap{D}^{s_1}\ii |u|^{2p}u,\jjap{D}^{s_1}u)_{L^{2}}
\\&\leq \|\jjap{D}^{s_1}|u|^{2p}u\|_{L^{2}}\|\jjap{D}^{s_1}u\|_{L^{2}}
\leq ||u|^{2p}u|_{s_1}|u|_{s_1}\,.
\end{aligned}
\end{equation*}
Therefore, by the second estimate in \eqref{confort2} in Lemma \ref{lem:confort2} 
and recalling that we set $R=2\mathtt{M}$, 
we get
\[
\pa_{t}|u(t)|^{2}_{s_1}\leq |u|_{s_1}^{2p+2}  \mathtt{M}^{2p s_0} 2^{-2p(s_1-s_0)}\,.
\]
By integrating in time one gets \eqref{basicStima}.
\end{proof}

The second result of this section provides a control 
of the growth of \emph{high} Sobolev norms.
\begin{theorem}{\bf (Control of high norms).}\label{thm:energyHighhigh}
Fix $s\geq s_1\geq s_0>d/2$.
Then, if $u$ is a solution of \eqref{QLNLS} satisfying  
 Hypothesis \ref{hyp:local}, one has the a priori estimates 
\begin{equation*}
|u(t)|_{s}^{2}\leq |u_0|_{s}^{2}
+\frac{2\mathtt{M}^{2p(s-s_1+s_0)}}{2^{2p(s_1-s_0)}}\int_{0}^{t}
|u(\tau)|^{2p}_{s_1}|u(\tau)|^{2}_{s}d\tau\,,
\end{equation*}
for every $t\in [0,T]$. As a consequence one also has
\begin{equation}\label{basicStima3}
|u(t)|^{2}_{s}\leq |u_0|^{2}_{s}\exp\Big\{ \frac{2\mathtt{M}^{2p (s-s_1+s_0)}}{2^{2p(s_1-s_0)}}\int_{0}^{t}
|u(\s)|_{s_1}^{2p}d\s\Big\}\,,
\qquad \forall\, t\in[0,T]\,. 
\end{equation}
\end{theorem}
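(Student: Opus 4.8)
The plan is to mimic the one–line energy computation in the proof of Theorem~\ref{thm:energyBasic}, but without specializing to the low index: I keep the high norm $|u|_s$ on the right–hand side and pull out a low norm $|u|_{s_1}$ on which we will later have good control. The only genuine input beyond the differentiation is the combination of the \emph{first} tame bound in \eqref{confort2} with the scaling inequality \eqref{confort3}, and this is exactly what produces the gain $2^{-2p(s_1-s_0)}$; no paralinearization (Proposition~\ref{prop:paralinp}) and no smallness from Hypothesis~\ref{hyp:local} is needed here, the estimate being homogeneous.

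First I would differentiate $t\mapsto |u(t)|_s^2$ along the flow. Using \eqref{normaJapJap} and the equation \eqref{QLNLS}, and noting that the linear term $-\Delta u$ does not contribute (since $\jjap{D}^s$ commutes with the self–adjoint $\Delta$, so its contribution is $2\Re$ of $\ii$ times a real quantity, hence $0$), one is left with the nonlinearity alone, which lies in $H^s$ by \eqref{confort2}, and Cauchy--Schwarz together with the definition of $|\cdot|_s$ give
\[
\pa_t|u(t)|_s^2 = 2\Re\big(\jjap{D}^s(\ii|u|^{2p}u),\,\jjap{D}^s u\big)_{L^2}\le 2\,\big|\,|u|^{2p}u\,\big|_s\,|u|_s .
\]
Next I would estimate $\big|\,|u|^{2p}u\,\big|_s$: the first inequality in \eqref{confort2} gives $\big|\,|u|^{2p}u\,\big|_s\le \mathtt{M}^{2ps}|u|_{s_0}^{2p}|u|_s$, and \eqref{confort3} applied with $s$ replaced by $s_1$ (legitimate since $s_1\ge s_0>d/2$) gives $|u|_{s_0}\le R^{-(s_1-s_0)}|u|_{s_1}$; since $R=2\mathtt{M}$ one has $R^{2p(s_1-s_0)}=2^{2p(s_1-s_0)}\mathtt{M}^{2p(s_1-s_0)}$, whence
\[
\big|\,|u|^{2p}u\,\big|_s\le \frac{\mathtt{M}^{2p(s-s_1+s_0)}}{2^{2p(s_1-s_0)}}\,|u|_{s_1}^{2p}\,|u|_s .
\]
Inserting this into the previous display and integrating over $[0,t]\subset[0,T]$ yields the first assertion. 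The exponential bound \eqref{basicStima3} then follows from the integral form of Grönwall's inequality applied to $\varphi(t):=|u(t)|_s^2$ with forcing $g(\tau):=\tfrac{2\mathtt{M}^{2p(s-s_1+s_0)}}{2^{2p(s_1-s_0)}}|u(\tau)|_{s_1}^{2p}$, which is continuous on $[0,T]$ by Hypothesis~\ref{hyp:local}.

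There is no real obstacle: this is a textbook energy estimate, and the whole point — the mechanism behind the exponential–in–$s_1$ lifespan of the paper — is the elementary observation that trading $s_1-s_0$ derivatives against the weight $\jjap{j}\ge R=2\mathtt{M}$ in \eqref{confort3} converts the crude constant $\mathtt{M}^{2ps}$ into the small factor $2^{-2p(s_1-s_0)}$. The only thing demanding care is bookkeeping: $\mathtt{M}$ must be the single absolute constant fixed right after Lemma~\ref{stimaNonlin}, so that it is simultaneously the constant in \eqref{confort2}, the one in $R=2\mathtt{M}$, and the one in the final estimate, and one should make sure the time derivative is justified (which it is, as only the $H^s$–valued nonlinear term survives in the computation above).
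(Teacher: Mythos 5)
Your proposal is correct and follows essentially the same route as the paper: differentiate $|u(t)|_s^2$, drop the skew-adjoint linear contribution, apply the first tame bound in \eqref{confort2} followed by the scaling inequality \eqref{confort3} with $R=2\mathtt{M}$ to produce the factor $\mathtt{M}^{2p(s-s_1+s_0)}2^{-2p(s_1-s_0)}$, then integrate and invoke the integral Gr\"onwall lemma (Lemma \ref{drago}-$(i)$). The bookkeeping of the constant $\mathtt{M}$ and the justification of the time derivative are handled exactly as in the paper.
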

\begin{proof}
We study the evolution of the high Sobolev norms 
$|u(t)|_{s}$ along the flow of \eqref{QLNLS}.
One has
\[
\begin{aligned}
\pa_{t}|u(t)|_{s}^{2}
&\stackrel{\eqref{normaJapJap}}{=}\pa_{t}(\jjap{D}^{s}u,\jjap{D}^{s}u)_{L^{2}}
\stackrel{\eqref{QLNLS}}{=}2{\rm Re}(\ii \jjap{D}^{s}|u|^{2p}u,\jjap{D}^{s}u)_{L^{2}}
\\&
\leq 2||u|^{2p}u|_{s}|u|_{s}
\stackrel{\eqref{confort2}}{\leq }2\mathtt{M}^{2p s}|u|_{s_0}^{2p}|u|_{s}^{2}
\stackrel{\eqref{confort3}}{\leq }
2\mathtt{M}^{2p s} (2\mathtt{M})^{-2p(s_1-s_0)}|u|_{s_1}^{2p}|u|_{s}^{2}\,.
\end{aligned}
\] 
The thesis follows by integrating in time and Lemma 
\ref{drago}-$(i)$.
\end{proof}

\subsection{Improved estimates}\label{sec:improved}

The core of this section is the following result.
\begin{theorem}{\bf (Improved a priori energy estimates).}\label{thm:energy}
Let $s_0>d/2$, $s_1\geq s_0+2$ and
consider 
$s\geq s_1+1$.
Then, if $u$ is a solution of \eqref{QLNLS}
satisfying Hypothesis \ref{hyp:local} with 
$s_0\rightsquigarrow{s}_1$,
 there exist an absolute constant $\mathtt{C},\mathtt{M}>0$ such that
\begin{equation}\label{energyapriori}
|u(t)|_{s}^{2}\leq \mathtt{C}^{2p s}\Big(|u_0|_{s}^{2}
+\frac{\mathtt{M}^{2p (s-s_1+s_0)}}{2^{2p(s_1-s_0)}}\int_{0}^{t}
|u(\tau)|^{2p+2(1-\alpha)}_{s_1}
|u(\tau)|_{s}^{2\alpha}d\tau\Big)\,,
\end{equation}
for every $t\in [0,T]$, and where
\[
\alpha:=1-\frac{1}{2(s-s_1)}\,. 
\]
As a consequence one also has
 \begin{equation}\label{buttalapasta}
|u(t)|_{s}^{2}\leq \mathtt{C}^{2ps(s-s_1)}
\Big[
|u(0)|_{s}^{2}
+
\left(
\frac{\mathtt{M}^{2p (s-s_1+s_0)}}{2^{2p(s_1-s_0)}}
\int_{0}^{t}
|u(\s)|_{s_1}^{2p+\frac{1}{s-s_1}}
d\s\right)^{2(s-s_1)}
\Big]\,,
 \end{equation}
 for any $t \in[0,T]$ and 
for some absolute constant $\mathtt{M}>0$ 
possibly larger than $\mathtt{C}$.
\end{theorem}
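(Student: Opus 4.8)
The plan is to pass to the para-differential formulation of \eqref{QLNLS}, conjugate by a bounded change of variables that removes the non-self-adjoint part of the principal term, run an energy identity for the new unknown, and close with the interpolation Lemma \ref{interopolo}(ii) followed by the sub-linear Gr\"onwall Lemma \ref{drago}-(ii).

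First I would apply Proposition \ref{prop:paralinp} to rewrite the nonlinearity as $(p+1)T_{|u|^{2p}}[u]+pT_{|u|^{2(p-1)}u^2}[\bar u]+\mathcal{Q}_1^{(p)}(u)[u]+\mathcal{Q}_2^{(p)}(u)[\bar u]$ and regard \eqref{QLNLS} as a system in the pair $(u,\bar u)$. Since the symbol $|u|^{2p}$ is real, $T_{|u|^{2p}}$ is self-adjoint and contributes nothing to the principal order of an $L^2$-type energy, and the $\mathcal{Q}_j^{(p)}$ are regularizing; the genuine obstruction, which forced the merely exponential bound of Theorem \ref{thm:energyHighhigh}, is the order-zero \emph{off-diagonal} operator $T_{|u|^{2(p-1)}u^2}$. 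This is disposed of by the para-differential transformation of Proposition \ref{prop:block}: it provides an invertible map $w=\Phi(u)u$ on $(u,\bar u)$, bounded with bounded inverse on each of the scales $H^{s_1}$, $H^{s-1}$, $H^s$, satisfying $E_s(t):=|w(t)|_s^2\sim|u(t)|_s^2$ up to a constant $\mathtt{C}^{2ps}$ (Corollary \ref{equinono}, which is where the hypotheses $|u_0|_{s_1}\le1/2$ and Hypothesis \ref{hyp:local} with $s_0\rightsquigarrow s_1$, hence $\sup_t|u(t)|_{s_1}\le1$, are used), and such that $w$ solves $\ii\partial_t w=\Delta w+(p+1)T_{|u|^{2p}}w+\mathcal{R}(t)w$ with $\mathcal{R}(t)$ a remainder that \emph{gains one derivative}, $|\mathcal{R}(t)v|_{\sigma+1}\lessdot_\sigma g(t)|v|_\sigma$, and with $g(t)$ depending \emph{only} on the low norm $|u(t)|_{s_1}$. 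Here the scaling estimates of Lemma \ref{lem:confort2} and Remark \ref{scaling property} bound the relevant symbols, such as $|\,|u|^{2p}|_{s_0+1}$, by $\mathtt{M}^{2p(\cdot)}2^{-2p(s_1-s_0)}|u|_{s_1}^{2p}$, which is the source of the gain factor $2^{-2p(s_1-s_0)}$ in \eqref{energyapriori}; and the margin $s_1\ge s_0+2$ is needed precisely to control $\partial_t\Phi$, which through the equation involves two extra space-derivatives of $u$.

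Next I would differentiate the modified energy. Using $\jjap{D}^s$ self-adjoint and $\partial_t w=-\ii(\Delta+(p+1)T_{|u|^{2p}}+\mathcal{R})w$, one gets $\tfrac{d}{dt}|w|_s^2=2\,\Im\big(\jjap{D}^s(\Delta+(p+1)T_{|u|^{2p}}+\mathcal{R})w,\jjap{D}^s w\big)_{L^2}$. The Laplacian contribution is real and vanishes; in the $T_{|u|^{2p}}$ contribution self-adjointness kills the main term and only the commutator $[\jjap{D}^s,T_{|u|^{2p}}]$ survives, which by Lemma \ref{lem:commutator} gains one derivative, hence — after moving one factor $\jjap{D}$ onto it — is bounded by $\lessdot_s|\,|u|^{2p}|_{s_0+1}|w|_{s-1}|w|_s$; the $\mathcal{R}$-contribution is handled by moving one derivative across the pairing, $|(\jjap{D}^s\mathcal{R}w,\jjap{D}^s w)|=|(\jjap{D}^{s+1}\mathcal{R}w,\jjap{D}^{s-1}w)|\le|\mathcal{R}w|_{s+1}|w|_{s-1}\lessdot_s g(t)|w|_s|w|_{s-1}$. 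Hence $\tfrac{d}{dt}|w|_s^2\lessdot_s g(t)|w|_{s-1}|w|_s$, where $g(t)$ now denotes the total coefficient and $g(t)\lessdot_s\mathtt{C}^{2ps}\mathtt{M}^{2p(s-s_1+s_0)}2^{-2p(s_1-s_0)}|u(t)|_{s_1}^{2p}$. I then invoke interpolation, Lemma \ref{interopolo}(ii) with $s-1=\tfrac1{s-s_1}s_1+(1-\tfrac1{s-s_1})s$ — legitimate since $s\ge s_1+1$ — to get $|w|_{s-1}\le2|w|_{s_1}^{1/(s-s_1)}|w|_s^{1-1/(s-s_1)}$, so that $\tfrac{d}{dt}|w|_s^2\lessdot_s g(t)|w|_{s_1}^{1/(s-s_1)}|w|_s^{2\alpha}$ with $\alpha=1-\tfrac1{2(s-s_1)}$. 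Integrating in time, bounding $|w|_{s_1}\le|u|_{s_1}$ and $g(t)|u|_{s_1}^{1/(s-s_1)}\lessdot_s\mathtt{C}^{2ps}\mathtt{M}^{2p(s-s_1+s_0)}2^{-2p(s_1-s_0)}|u|_{s_1}^{2p+2(1-\alpha)}$, and finally using $E_s\sim|u|_s^2$ from Corollary \ref{equinono}, yields \eqref{energyapriori}, the prefactor $\mathtt{C}^{2ps}$ absorbing the equivalence constant together with the $\mathtt{C}^{\max\{s,s_0\}}$-type constants of the para-differential lemmata.

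Finally, \eqref{buttalapasta} follows by feeding \eqref{energyapriori} into the sub-linear Gr\"onwall Lemma \ref{drago}-(ii): with $y(t):=|u(t)|_s^2$ satisfying $y(t)\le A+B\int_0^t\phi(\sigma)y(\sigma)^\alpha d\sigma$, $A=\mathtt{C}^{2ps}|u_0|_s^2$, $B\approx\mathtt{C}^{2ps}\mathtt{M}^{2p(s-s_1+s_0)}2^{-2p(s_1-s_0)}$, $\phi(\sigma)=|u(\sigma)|_{s_1}^{2p+1/(s-s_1)}$ and $1-\alpha=\tfrac1{2(s-s_1)}$, the lemma gives $y(t)\lesssim A+(B\int_0^t\phi)^{1/(1-\alpha)}=A+(B\int_0^t\phi)^{2(s-s_1)}$; absorbing the constants — which the Gr\"onwall step raises to the power $2(s-s_1)$ — into a single $\mathtt{C}^{2ps(s-s_1)}$ gives exactly \eqref{buttalapasta}. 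The main obstacle is the construction underlying the second step: building $\Phi$ and proving that the conjugated remainder $\mathcal{R}$ genuinely smooths by one derivative with operator norm governed \emph{solely} by the low Sobolev norm (times the scaling gain $2^{-2p(s_1-s_0)}$) rests on carefully combining the composition and commutator calculus of Section \ref{sec:funcsett} (Lemmata \ref{lem:commutator}--\ref{compoparasmooth}) with the nonlinear/scaling bounds of Lemma \ref{lem:confort2}, and is exactly the content of Proposition \ref{prop:block}; once that is available, what remains above is interpolation-and-Gr\"onwall bookkeeping.
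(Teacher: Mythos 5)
Your proposal is correct and follows essentially the same route as the paper: paralinearization (Proposition \ref{prop:paralinp}), block-diagonalization via $w=\Phi(u)u$ (Proposition \ref{prop:block}) with the equivalence of Corollary \ref{equinono}, the commutator estimate \eqref{cespuglio4} for the self-adjoint paraproduct term, the one-smoothing bound on the remainder, interpolation via Lemma \ref{interopolo}(ii), the scaling gain from Remark \ref{scaling property}, and finally the sub-linear Gr\"onwall Lemma \ref{drago}-(ii). The only cosmetic difference is that you move a derivative across the pairing to handle the remainder term, whereas the paper bounds $|\mathcal{S}_j(u)[w]|_{s}$ directly from $|w|_{s-1}$; the two are equivalent.
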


The proof of Theorem above is more subtle (and involves several arguments)
with respect to the energy estimates performed in subsection \ref{sec:basicbasic}.
The first result we need is the following. 

\begin{proposition}{\bf (Paralinearization of NLS).}\label{prop:NLSparalinearizzata}
We have that
the equation \eqref{QLNLS}
is equivalent to the following system (recall  \eqref{paraprod}):
\begin{equation}\label{sistemaNLS}
\pa_{t}U=-\ii E\Delta U+\ii E\mathcal{A}(u)U+\ii E\mathcal{B}(u)U+\mathcal{R}(u)U\,,
\qquad U:=\vect{u}{\bar{u}}\,,\quad  E:=\sm{1}{0}{0}{-1}\,,
\end{equation}
where 
\begin{equation}\label{simboliabbis}
\mathcal{A}(u):=\left(\begin{matrix}
T_{a} & 0 
\\0 & T_{a}
\end{matrix}\right)\,,
\quad 
\mathcal{B}(u):=
\left(\begin{matrix}
0 & T_{b} 
\\ T_{\ov{b}} & 0 
\end{matrix}\right)\,,
\qquad 
\mathcal{R}(u):=\left(\begin{matrix}
R_{1}(u) & R_{2}(u) 
\vspace{0.2em}\\\ov{R_2}(u) & \ov{R_1}(u)
\end{matrix}\right)
\end{equation}
with
\begin{equation}\label{simboliab}
a:=a(u):=(p+1)|u|^{2p}\,,\qquad 
b:=b(u):=p |u|^{2(p-1)}u^2
\end{equation}
and where the remainders 
$R_1$, $R_{2}$ are $p$-admissible remainders according to 
Definition \ref{def:pAdmissible}. In particular
for any $\rho\geq0$
there exists a constant $\mathtt{M}:=\mathtt{M}(\rho)>0$ 
such that 
for any 
$u\in B_1(H^{s_0+\rho}(\T^{d};\C))$
one has
\begin{equation*}
|R_{j}(u)[h]|_{s+\rho}\leq \mathtt{M}^{2p\max\{s,s_0\}} |u|_{s_0+\rho}^{2p} |h|_{s}\,,
\qquad 
\forall \, h\in H^{s}(\T^{d};\C)\,,\;\; j=1,2\,, s>0\,.
\end{equation*}
\end{proposition}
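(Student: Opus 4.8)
The plan is to rewrite \eqref{QLNLS} as a first order system for the pair $U=\vect{u}{\bar u}$, to substitute the paralinearization of the nonlinearity provided by Proposition \ref{prop:paralinp}, and then to check that the complex-conjugate relations recalled right before Lemma \ref{azioneSimboo} reproduce exactly the matrix structure \eqref{simboliabbis}--\eqref{simboliab}. Choosing the $+$ sign in \eqref{QLNLS}, the equation reads $u_t=-\ii\Delta u+\ii|u|^{2p}u$, and its complex conjugate is $\bar u_t=\ii\Delta\bar u-\ii|u|^{2p}\bar u$. By Proposition \ref{prop:paralinp} and the definitions \eqref{simboliab} of $a$ and $b$, one has $|u|^{2p}u=T_{a}[u]+T_{b}[\bar u]+\mathcal{Q}_1^{(p)}(u)[u]+\mathcal{Q}_2^{(p)}(u)[\bar u]$. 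Substituting this into the $u$-equation yields precisely the first row of \eqref{sistemaNLS} once one sets $R_1(u):=\ii\,\mathcal{Q}_1^{(p)}(u)$ and $R_2(u):=\ii\,\mathcal{Q}_2^{(p)}(u)$.

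For the second row I would take the complex conjugate of the paralinearization identity and use $\overline{T_c h}=T_{\bar c}\bar h$ together with the convention $\overline{A[\bar h]}=\bar A[h]$ from the excerpt. Since $a=(p+1)|u|^{2p}$ is real one gets $\overline{T_a u}=T_a\bar u$, while $\overline{T_b\bar u}=T_{\bar b}u$; hence $|u|^{2p}\bar u=T_{a}[\bar u]+T_{\bar b}[u]+\overline{\mathcal{Q}_1^{(p)}(u)[u]}+\overline{\mathcal{Q}_2^{(p)}(u)[\bar u]}$. Plugging this into $\bar u_t=\ii\Delta\bar u-\ii|u|^{2p}\bar u$ and observing that $-\ii\,\overline{\mathcal{Q}_1^{(p)}(u)[u]}=\overline{R_1(u)[u]}=\bar R_1(u)[\bar u]$ and $-\ii\,\overline{\mathcal{Q}_2^{(p)}(u)[\bar u]}=\overline{R_2(u)[\bar u]}=\bar R_2(u)[u]$, one recovers exactly the second row of \eqref{sistemaNLS}, with the diagonal para-product $T_a$ and the off-diagonal ones $T_b$, $T_{\bar b}$ in the positions dictated by $\mathcal{A}(u)$, $\mathcal{B}(u)$, $E$ and $\mathcal{R}(u)$. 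Conversely, reading \eqref{sistemaNLS} componentwise and re-summing the para-products via Proposition \ref{prop:paralinp} gives back \eqref{QLNLS}, so the two formulations are equivalent.

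It remains to transfer the quantitative estimate. Since $R_j(u)=\ii\,\mathcal{Q}_j^{(p)}(u)$ and multiplication by $\ii$ is an isometry of every $H^{s}(\T^d;\C)$, the claimed bound on $|R_j(u)[h]|_{s+\rho}$ is literally the $p$-admissibility of $\mathcal{Q}_j^{(p)}$ established in Proposition \ref{prop:paralinp} (in the sense of Definition \ref{def:pAdmissible}), with the same constant $\mathtt{M}=\mathtt{M}(\rho)$. I do not expect a genuine difficulty here: the whole argument is a substitution plus a careful tracking of complex conjugations, and the only point that deserves attention is checking that the conjugate of the scalar paralinearization identity is consistent, slot by slot, with the second component of the proposed system \eqref{sistemaNLS}.
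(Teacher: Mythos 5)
Your proposal is correct and follows exactly the route the paper takes: its proof of Proposition \ref{prop:NLSparalinearizzata} consists of the single remark that the statement ``follows straightforward by \eqref{QLNLS} and Proposition \ref{prop:paralinp}'', i.e.\ precisely the substitution of the paralinearization of $|u|^{2p}u$ into the system for $(u,\bar u)$ that you carry out. Your version merely makes explicit the conjugation bookkeeping ($R_j=\ii\,\mathcal{Q}_j^{(p)}$, $\ov{R_j}(u)[\cdot]=\overline{R_j(u)[\bar{\,\cdot\,}]}$) and the transfer of the $p$-admissibility bound, all of which checks out.
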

\begin{proof}
It follows straightforward by \eqref{QLNLS} and Proposition \ref{prop:paralinp}.
\end{proof}

\begin{remark}\label{sapone}
Notice that, in view of \eqref{simboliab}, Hypothesis \ref{hyp:local} 
and Lemma \ref{stimaNonlin},
we have
\[
\sup_{t\in[0,T]}\big(|a|_{s_0}+|b|_{s_0}\big)\leq C^{2ps}|u|_{s_0}^{2p}\,,\qquad 
\sup_{t\in[0,T]}\big(|a|_{s}+|b|_{s}\big)\leq C^{2ps}|u|_{s_0}^{2p-1}|u|_{s}<+\infty\,,
\]
for some $C>0$ independent of $s$.
\end{remark}

Since \eqref{sistemaNLS} is a system it is convenient
to work on vector valued function spaces $H^{s}(\T^{d};\C^2)$
endowed with the standard product norm. With abuse of notation
we shall still denote it as $|\cdot|_{s}$.
Consider the subspace
\begin{equation*}
\mathcal{U}:=\{(u,v)\in L^{2}(\T^{d};\C^2)\,:\, \bar{u}=v\}\,.
\end{equation*}
Notice  that 
only have to work in the subspace 
$H^{s}(\T^{d};\C^2)\cap\mathcal{U}$.
We say that an  operator in $\mathcal{L}(H^{s}(\T^{d};\C^2))$
is \emph{real-to-real} if preserves the subspace $\mathcal{U}$.

\subsubsection{Block-diagonalization up to smoothing remainders}
In this section we block-diagonalize system 
\eqref{sistemaNLS} up to remainders which are  regularizing.
We have the following lemma.
\begin{lemma}\label{lem:stimemappa}
Consider the function 
\begin{equation}\label{generator}
c:=c(u):=\tfrac{1}{2}b(u)=\tfrac{1}{2}|u|^{2(p-1)}u^2\,,
\end{equation}
for $u=u(t,x)$ satisfying Hypothesis \ref{hyp:local} with $s_0>d/2$.
One has that  $c\in H^{s}(\T^{d};\C)$ and 
\begin{align}
|c|_{q}&\lessdot_{s}|u|^{2p-1}_{s_0}|u|_{q}\,,\qquad 
\forall\, s_0\leq q\leq s\,,\quad \forall t\in [0,T]\,.
\label{nave1}
\end{align}
If in addition $u=u(t,x)$ belongs to $H^{s}(\T^{d};\C)$ with $s\geq s_0+2$, one has 
\begin{align}
|\pa_{t}c|_{q}&\lessdot_{q}|u|^{2p-1}_{s_0+2}|u|_{q+2}\,,
\qquad \forall\, s_0+2\leq q+2\leq s\,,\quad \forall\, t\in[0,T]\,.
\label{nave2}
\end{align}
\end{lemma}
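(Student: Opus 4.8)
The plan is to prove the two bounds separately; both rest on the multilinear tame estimates already established, after the elementary observation that $|u|^{2(p-1)}u^{2}=u^{p+1}\bar u^{p-1}$, so that $c=\tfrac12 u^{p+1}\bar u^{p-1}$ is a monomial of total degree $2p$ in $(u,\bar u)$.

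For \eqref{nave1} I would simply apply Lemma \ref{stimaNonlin} with $q_{1}=p+1$, $q_{2}=p-1$ (so $q_{1}+q_{2}-1=2p-1$): this yields $|c|_{q}=\tfrac12|u^{p+1}\bar u^{p-1}|_{q}\le\tfrac12\mathtt{M}^{(2p-1)q}|u|_{s_{0}}^{2p-1}|u|_{q}$ for every $q\ge s_{0}$, hence $|c|_{q}\lessdot_{s}|u|_{s_{0}}^{2p-1}|u|_{q}$ on the range $s_{0}\le q\le s$. Taking $q=s$ and recalling that $u(t)\in H^{s}$ on $[0,T]$ by Hypothesis \ref{hyp:local} gives $c(t)\in H^{s}$.

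For \eqref{nave2}, since $u\in C^{1}([0,T];H^{s-2})$ and $c$ is polynomial in $(u,\bar u)$, the chain rule gives
\[
\pa_{t}c=\tfrac{p+1}{2}\,u^{p}\bar u^{p-1}\,\pa_{t}u+\tfrac{p-1}{2}\,u^{p+1}\bar u^{p-2}\,\pa_{t}\bar u\,,
\]
the second summand being present only for $p\ge2$. Substituting $\pa_{t}u=-\ii\Delta u+\ii|u|^{2p}u$ from \eqref{QLNLS} together with its conjugate, one is left with two types of contributions: the \emph{Laplacian} terms $u^{p}\bar u^{p-1}\Delta u$ and $u^{p+1}\bar u^{p-2}\Delta\bar u$, and the \emph{nonlinear} terms, which are all scalar multiples of the single monomial $u^{2p+1}\bar u^{2p-1}$. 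The nonlinear terms are immediate: Lemma \ref{stimaNonlin} (now with total degree $4p$) gives $|u^{2p+1}\bar u^{2p-1}|_{q}\lessdot_{q}|u|_{s_{0}}^{4p-1}|u|_{q}$, and since $|u|_{s_{0}}\le1$ on $[0,T]$ by Hypothesis \ref{hyp:local}, together with the monotonicity $|u|_{\sigma}\le|u|_{\tau}$ for $s_{0}\le\sigma\le\tau$ (Remark \ref{scalaBan}), this is $\lessdot_{q}|u|_{s_{0}+2}^{2p-1}|u|_{q+2}$.

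For the Laplacian terms I would first record the two elementary facts $|\Delta v|_{q}\le|v|_{q+2}$ (because $|j|^{2}\le\jjap{j}^{2}$) and $|\bar v|_{q}=|v|_{q}$. Then, iterating the bilinear tame estimate of Lemma \ref{lem:tametame} over the $2p$ factors of $u^{p}\bar u^{p-1}\Delta u$, one obtains a finite sum in which, in each summand, exactly one factor carries the $|\cdot|_{q}$-norm while the other $2p-1$ factors carry $|\cdot|_{s_{0}}$-norms; using $|\Delta u|_{q}\le|u|_{q+2}$, $|\Delta u|_{s_{0}}\le|u|_{s_{0}+2}$ and the monotonicity of the norms, every such term is $\lessdot_{q}|u|_{s_{0}+2}^{2p-1}|u|_{q+2}$, and $u^{p+1}\bar u^{p-2}\Delta\bar u$ is handled identically. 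Summing the contributions gives \eqref{nave2} on the stated range $s_{0}+2\le q+2\le s$. This lemma carries no genuine obstacle; the only mildly delicate point — which I would write out carefully — is the bookkeeping in the iterated tame estimate, namely checking that the high norm never falls on two distinct factors at once (which would cost a power of $|u|$ not recoverable from $|u|_{s_{0}}\le1$) and that, whichever factor carries the $H^{q}$-norm, the output can be reorganised as $|u|_{s_{0}+2}^{2p-1}|u|_{q+2}$ using only the monotonicity of $|\cdot|_{\sigma}$ in $\sigma$; the remaining computations are routine.
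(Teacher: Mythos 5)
Your proposal is correct and follows essentially the same route as the paper: \eqref{nave1} is a direct application of Lemma \ref{stimaNonlin} to the monomial $u^{p+1}\bar u^{p-1}$, and \eqref{nave2} is obtained by using the equation to trade $\pa_t u$ for $-\ii\Delta u+\ii|u|^{2p}u$ and then applying the tame product estimates together with $|u|_{s_0}\le 1$ and the monotonicity of $|\cdot|_\sigma$. If anything, your write-up is slightly more explicit than the paper's (which first records $|\pa_t u|_q\lessdot_q|u|_{q+2}$ and then invokes Lemma \ref{stimaNonlin} without spelling out the iterated bilinear bookkeeping), but the substance is identical.
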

\begin{proof}
Applying  Lemma \ref{stimaNonlin} 
to \eqref{generator} one obtains \eqref{nave1}.
Let us check the bound \eqref{nave2}. First of all, recall that  
$u$ solves (recall Hypothesis \ref{hyp:local})
$\pa_{t}u=-\ii \Delta u+\ii |u|^{2p}u$, 
and hence
\begin{equation}\label{nave3}
|\pa_{t}u|_{q}\leq |\Delta u|_{q}+||u|^{2p}u|_{q}
\stackrel{\eqref{confort}}{\lessdot_{q}}
|u|_{q+2}+|u|_{s_0}^{2p}|u|_{q}
\stackrel{\eqref{hypo1}}{\lessdot_{q}}|u|_{q+2}\,,
\end{equation}
for any $t\in[0,T]$ as long as
$q+2\leq s$. 
We also note that, by \eqref{generator}, that
\[
|\pa_{t}c|_{q}\lessdot_{q}|u|_{s_0+2}^{2p}|u|_{q+2}\,,
\] 
for any $t\in[0,T]$ where we used again Lemma 
\ref{stimaNonlin} and the estimate \eqref{nave3}.
Therefore \eqref{nave2} follows.
\end{proof}

Fix $N>R$ 
and (recalling \eqref{def:truncat})  
define the operators
\begin{equation}\label{propostamappa1}
\Phi(u)[\cdot]:=\uno+\mathcal{G}^{>N}(u)\,,
\qquad
\Gamma(u)[\cdot]:=\uno-\mathcal{G}^{>N}(u) 
\end{equation}
where
\begin{equation*}
\mathcal{G}^{>N}(u):=\left(
\begin{matrix}
0 & T^{>N}_{c}\circ\langle D\rangle^{-2}
\\
T^{>N}_{\ov{c}}\circ\langle D\rangle^{-2} & 0
\end{matrix}
\right)\,.
\end{equation*}
We now show that the map $\Phi(u)$ is well-posed 
and invertible.

\begin{lemma}\label{lem:inverto}
Let $s\geq s_0>d/2$ and  $u\in H^{s}(\T^{d};\C)$
satisfying  Hypothesis \ref{hyp:local}. 
Then 
the following holds:

\noindent
$(i)$ there exists an absolute constant $\mathtt{C}>0$ such that for any $s'\geq s_0$\footnote{Note that the condition $s'\geq s_0$ of items $(i)$-$(iv)$ could be relaxed up to  $s'>0$. Accordingly the constants will depend on $s_0$
as in Lemma \ref{azioneSimboo}. }
\begin{align}
|\mathcal{G}^{>N}(u)V|_{s'+2}&\leq 
\mathtt{C}^{2ps'}|u|^{2p}_{s_0}|V|_{s'}\,, \label{nave10}
\\
|\Phi(u)V|_{s'}+ |\Gamma(u)V|_{s'}&\leq
|V|_{s'}(1+ \mathtt{C}^{2ps'}|u|^{2p}_{s_0})\,, \label{nave12}
\end{align}
for any $V\in H^{s'}(\T^{d};\C^2)$.

\noindent
$(ii)$ One has 
$\Gamma(u)\circ\Phi(u)[\cdot]=\uno+\mathcal{Q}^{>N}(u)[\cdot]$
where $\mathcal{Q}^{>}(u)[\cdot]$ is a real-to-real operator
satisfying  for $s'\geq s_0$
\begin{align}
|\mathcal{Q}^{>N}(u)V|_{s'+4}&\leq \mathtt{C}^{2ps'}
|u|^{2p}_{s_0}|V|_{s'}\,,\label{nave13}
\\
|\mathcal{Q}^{>N}(u)V|_{s'+3}&\leq \mathtt{C}^{2ps'}  N^{-1}
|u|^{2p}_{s_0}|V|_{s'}\,,\label{nave14}
\end{align}
for some absolute constant $\mathtt{C}>0$
and  any $V\in H^{s'}(\T^{d};\C^2)$.

\noindent
$(iii)$
For $s'\geq s_0$, $N>\mathtt{C}^{2ps'}$, with $\mathtt{C}$ 
the constant appearing in \eqref{nave14}, 
the $\uno+\mathcal{Q}^{>N}(u)[\cdot]$ is invertible 
and 
$(\uno+\mathcal{Q}^{>N}(u)[\cdot])^{-1}=\uno+\widetilde{\mathcal{Q}}^{>N}(u)$
with
\begin{equation}\label{nave15}
|\widetilde{\mathcal{Q}}^{>N}(u)V|_{s'+3}\leq 
\mathtt{C}^{2ps'}  N^{-1}
|u|^{2p}_{s_0}|V|_{s'}\,
\end{equation}
for  any $V\in H^{s'}(\T^{d};\C^2)$. Moreover 
$\Phi^{-1}(u):=(\uno+\widetilde{Q}^{>N}(u))\circ \Gamma(u)$
satisfies 
\begin{equation}\label{nave16}
|\Phi^{-1}(u)V|_{s'}\leq
|V|_{s'}(1+ \mathtt{C}^{2ps'}|u|^{2p}_{s_0})
\end{equation}
for some absolute constant $\mathtt{C}>0$
and  any $V\in H^{s'}(\T^{d};\C^2)$.

\noindent
$(iv)$ 
if in addition $u\in H^{s}(\T^d;\C)$, $s\geq s_0+2$, then 
for any $t\in[0,T]$ and for any $s'\geq s_0$ 
one has 
$\pa_{t}\Phi(u)=\pa_{t}\mathcal{G}^{>N}(u)$ and 
\begin{equation}\label{nave20}
|\pa_{t}\mathcal{G}^{>N}(u)V|_{s'+2}\leq \mathtt{C}^{2ps'}
|u|^{2p}_{s_0+2}|V|_{s'}\,,
\end{equation}
for some absolute constant $\mathtt{C}>0$
and  any $V\in H^{s'}(\T^{d};\C^2)$.
\end{lemma}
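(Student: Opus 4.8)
The plan is to prove $(i)$ and $(iv)$ by directly feeding the bounds on $c(u)$ from Lemma \ref{lem:stimemappa} into the truncation estimates \eqref{calma21}--\eqref{calma22} of Lemma \ref{danguard}; to obtain $(ii)$ from one elementary operator identity plus two more applications of those truncation estimates; and to get $(iii)$ by a Neumann series. For $(i)$: every off-diagonal block of $\mathcal{G}^{>N}(u)$ has exactly the shape $T^{>N}_{c}\circ\langle D\rangle^{-2}$ (or $T^{>N}_{\bar c}\circ\langle D\rangle^{-2}$) governed by \eqref{calma21} with $m=2$; since $|\bar c|_{s_0}=|c|_{s_0}$, $|c|_{s_0}\lessdot_{s_0}|u|^{2p}_{s_0}$ by \eqref{nave1}, and $R=2\mathtt{M}$ is absolute, one gets \eqref{nave10}, absorbing any spurious extra power of $|u|_{s_0}$ into $\mathtt{C}^{2ps'}$ thanks to $|u|_{s_0}\le1$ from \eqref{hypo1}. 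Then \eqref{nave12} is immediate from $\Phi=\uno+\mathcal{G}^{>N}$, $\Gamma=\uno-\mathcal{G}^{>N}$ and $|W|_{s'}\le|W|_{s'+2}$ (Remark \ref{scalaBan}). For $(iv)$: $\partial_t\Phi=\partial_t\mathcal{G}^{>N}$ because $\uno$ is constant; since $a\mapsto T^{>N}_{a}\circ\langle D\rangle^{-2}$ is linear and bounded $H^{s_0}\to\mathcal{L}(H^{s'},H^{s'+2})$ and $t\mapsto c(u(t))\in H^{s_0}$ is $C^1$ (here one uses $u\in C^1([0,T];H^{s-2})$ with $s-2\ge s_0$, which is exactly why $s\ge s_0+2$ is assumed), the derivative is $\mathcal{G}^{>N}$ with $c$ replaced by $\partial_t c$, and \eqref{calma21} together with $|\partial_t c|_{s_0}\lessdot_{s_0}|u|^{2p}_{s_0+2}$ from \eqref{nave2} gives \eqref{nave20}.

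For $(ii)$ I would compute $\Gamma(u)\circ\Phi(u)=(\uno-\mathcal{G}^{>N})\circ(\uno+\mathcal{G}^{>N})=\uno-(\mathcal{G}^{>N})^2$, so that $\mathcal{Q}^{>N}(u):=-(\mathcal{G}^{>N}(u))^2$ is the block-diagonal operator with diagonal entries $-T^{>N}_{c}\langle D\rangle^{-2}T^{>N}_{\bar c}\langle D\rangle^{-2}$ and $-T^{>N}_{\bar c}\langle D\rangle^{-2}T^{>N}_{c}\langle D\rangle^{-2}$; real-to-realness follows from $\overline{T^{>N}_{c}\langle D\rangle^{-2}}=T^{>N}_{\bar c}\langle D\rangle^{-2}$. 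The two bounds differ only in how the smoothing of the two factors $\langle D\rangle^{-2}$ is distributed: for \eqref{nave13} apply \eqref{calma21} to each factor (total regularizing gain $2+2=4$), while for \eqref{nave14} apply \eqref{calma21} to the inner factor and \eqref{calma22} to the outer one (gain $2+1$ and the crucial surviving factor $N^{-1}$); in both cases insert $|c|_{s_0}\lessdot_{s_0}|u|^{2p}_{s_0}$ twice and use $|u|^{4p}_{s_0}\le|u|^{2p}_{s_0}$.

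For $(iii)$: from \eqref{nave14} (passing from $|\cdot|_{s'+3}$ to $|\cdot|_{s'}$ via Remark \ref{scalaBan}) and $|u|_{s_0}\le1$ one has $\|\mathcal{Q}^{>N}(u)\|_{\mathcal{L}(H^{s'})}\le\mathtt{C}^{2ps'}N^{-1}$, hence $\le1/2$ once $N\gtrsim\mathtt{C}^{2ps'}$; so $\uno+\mathcal{Q}^{>N}(u)$ is invertible by Neumann series, $(\uno+\mathcal{Q}^{>N})^{-1}=\uno+\widetilde{\mathcal{Q}}^{>N}$ with $\|\widetilde{\mathcal{Q}}^{>N}\|_{\mathcal{L}(H^{s'})}\le1$, and feeding the identity $\widetilde{\mathcal{Q}}^{>N}=-\mathcal{Q}^{>N}\circ(\uno+\widetilde{\mathcal{Q}}^{>N})$ back into \eqref{nave14} yields the gain-$3$ estimate \eqref{nave15}. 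Since $(\mathcal{G}^{>N})^2$ is the same whether one expands $\Gamma\Phi$ or $\Phi\Gamma$, one has $\Phi\Gamma=\Gamma\Phi=\uno+\mathcal{Q}^{>N}$, so $(\uno+\widetilde{\mathcal{Q}}^{>N})\circ\Gamma$ is a left inverse of $\Phi$ while $\Gamma\circ(\uno+\widetilde{\mathcal{Q}}^{>N})$ is a right inverse; hence $\Phi$ is invertible and $\Phi^{-1}:=(\uno+\widetilde{\mathcal{Q}}^{>N})\circ\Gamma$. Finally \eqref{nave16} follows by composing \eqref{nave12} with $\|\widetilde{\mathcal{Q}}^{>N}\|_{s'}\le1$ and using that a product $(1+a)(1+b)$ of two bounds of the type $1+\mathtt{C}^{2ps'}|u|^{2p}_{s_0}$ is again of that type after enlarging $\mathtt{C}$ (its leading term is $1$, not $2$).

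The steps are conceptually light, so the main obstacle is purely bookkeeping: propagating the $s'$-dependent constants $\mathtt{C}^{2ps'}$ through the compositions and the Neumann series --- using $p\ge1$ so that $2ps'$ dominates the $s'+O(1)$ produced by the $\lessdot_{s,s_0}$ losses, and $|u|_{s_0}\le1$ to absorb surplus powers of the low norm --- and checking that the single factor $N^{-1}$ of \eqref{nave14} is preserved under inversion, so that $\widetilde{\mathcal{Q}}^{>N}$ (hence $\Phi^{-1}$) is controlled uniformly for $N$ large. The one genuine, if mild, analytic point is in $(iv)$: one needs $\partial_t c\in H^{s_0}$, which forces the two extra derivatives $s\ge s_0+2$ through the equation $\partial_t u=-\ii\Delta u+\ii|u|^{2p}u$.
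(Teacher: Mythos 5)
Your proposal is correct and follows essentially the same route as the paper: item $(i)$ via the truncation/commutator estimates applied to $T^{>N}_{c}\circ\langle D\rangle^{-2}$ together with \eqref{nave1}, item $(ii)$ via the explicit identity $\Gamma(u)\circ\Phi(u)=\uno-(\mathcal{G}^{>N}(u))^2$ with the $N^{-1}$ extracted from \eqref{calma22} on one of the two factors, item $(iii)$ by Neumann series, and item $(iv)$ from $\pa_t T_c\circ\langle D\rangle^{-2}=T_{\pa_t c}\circ\langle D\rangle^{-2}$ plus \eqref{nave2}. The only cosmetic difference is which of the two factors in $(\mathcal{G}^{>N})^2$ absorbs the loss of one derivative in \eqref{nave14}, which is immaterial.
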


\begin{proof}
Item $(i)$. The bound \eqref{nave10}
follows by estimate \eqref{cespuglio3} applied to the
 operator $T_{c}^{>N}\circ\langle D\rangle^{-2}$
 and using also \eqref{nave1}.
The estimate \eqref{nave12} is just a consequence
of \eqref{nave10}.

Item $(ii)$. By an explicit computation using \eqref{propostamappa1}
we get 
\[
\mathcal{Q}^{>N}(u)=\left(
\begin{matrix}
Q_1(u) & 0
\\ 0 & \ov{Q_1}(u)
\end{matrix}
\right)\,,\qquad Q_{1}(u):=-
T_{c}^{>N}\circ\langle D\rangle^{-2}\circ 
T_{\ov{c}}^{>N}\circ\langle D\rangle^{-2}\,.
\]
Then the bound \eqref{nave13} 
follows
by Lemma \ref{lem:composition} and \eqref{nave1}.
Moreover
\[
\begin{aligned}
|Q_1(u)[v]|_{s'+3}
\stackrel{\eqref{cespuglio3}, \eqref{nave1}}{\lessdot_s}
|u|^{2p}_{s_0}|T_{\bar{c}}\circ\langle D\rangle^{-2}v|_{s'+1}
\stackrel{\eqref{calma22}, \eqref{hypo1}}{\lessdot_{s'}}
|u|_{s_0}^{2p}N^{-1}|v|_{s'}\,,
\end{aligned}
\]
for any $v\in H^{s'}(\T^{d};\C)$, which is 
the \eqref{nave14}.

Item $(iii)$. The bound \eqref{nave15} follows by Neumann series,
\eqref{hypo1}, the estimate \eqref{nave14} taking $N$
large enough (with respect to $C^{2ps'}$ for some absolute constant $C>0$) to get $|\widetilde{Q}^{>N}(u)V|_{s'+3}\leq 1/2 |V|_{s'}$.
Estimate \eqref{nave16} follows by composition using 
\eqref{nave12} and \eqref{nave15}.

Item $(iv)$.
Notice that $\pa_{t}T_{c}\circ \langle D\rangle^{-2}=T_{\pa_{t}c}\circ\langle D\rangle^{-2}$.
Therefore \eqref{nave20} follows
by \eqref{calma21}
and \eqref{nave2}.
\end{proof}

As a consequence of Lemma \ref{lem:inverto} we get the following result.
\begin{corollary}\label{equinono}
Assume Hypothesis \ref{hyp:local} (with $s>d/2$) and define
\begin{equation}\label{newvariable}
W:=\Phi(u)[U]\,,\qquad W=\vect{w}{\bar{w}}\,,\quad U=\vect{u}{\bar{u}}\,,
\end{equation}
where $\Phi(u)[\cdot]$ is the map in \eqref{propostamappa1}.
Then there exists an absolute constant $\mathtt{C}>0$ 
such that, for $s\geq s_0$,
\begin{equation*}
\mathtt{C}^{-ps}|u|_{s}\leq |w|_{s}\leq \mathtt{C}^{ps}|u|_{s}\,,
\qquad \forall\, t\in[0,T]\,.
\end{equation*}
\end{corollary}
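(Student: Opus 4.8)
The plan is to read the equivalence off directly from Lemma \ref{lem:inverto}, using nothing beyond the a priori bound $|u|_{s_0}\le 1$ along the solution (Hypothesis \ref{hyp:local}) and the elementary monotonicity $|\cdot|_{s}\le|\cdot|_{s+2}$ of the modified norms (Remark \ref{scalaBan}).

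First I would reduce to the vector variables. Since $\widehat{\bar u}(j)=\overline{\widehat u(-j)}$ and $\jjap{j}=\jjap{-j}$, one has $|\bar u|_{s}=|u|_{s}$, hence $|U|_{s}=\sqrt2\,|u|_{s}$; moreover $\mathcal{G}^{>N}(u)$ has the real-to-real block form in \eqref{propostamappa1}, so $\Phi(u)$ preserves the subspace $\mathcal{U}$ and $W=\vect{w}{\bar w}$ as asserted, giving also $|W|_{s}=\sqrt2\,|w|_{s}$. Thus it suffices to prove $|W|_{s}\le \mathtt{C}^{ps}|U|_{s}$ and $|U|_{s}\le\mathtt{C}^{ps}|W|_{s}$.

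For the upper bound I would use $W=U+\mathcal{G}^{>N}(u)[U]$ from \eqref{newvariable}, \eqref{propostamappa1} together with \eqref{nave10} at regularity $s'=s$ and Remark \ref{scalaBan}:
\[
|W|_{s}\le |U|_{s}+|\mathcal{G}^{>N}(u)[U]|_{s}\le |U|_{s}+|\mathcal{G}^{>N}(u)[U]|_{s+2}\le\big(1+\mathtt{C}^{2ps}|u|_{s_0}^{2p}\big)|U|_{s}\le\big(1+\mathtt{C}^{2ps}\big)|U|_{s}\,,
\]
the last step using $|u|_{s_0}\le 1$. For the lower bound I would invoke Lemma \ref{lem:inverto}(iii): in the regime where the block-diagonalization is run (i.e. with $N$ large enough in terms of $\mathtt{C}^{2ps}$), $\Phi(u)$ is invertible with $U=\Phi^{-1}(u)[W]$, and \eqref{nave16} gives
\[
|U|_{s}=|\Phi^{-1}(u)[W]|_{s}\le\big(1+\mathtt{C}^{2ps}|u|_{s_0}^{2p}\big)|W|_{s}\le\big(1+\mathtt{C}^{2ps}\big)|W|_{s}\,.
\]
Finally, since $ps\ge ps_0>d/2\ge 1/2$, the prefactor $1+\mathtt{C}^{2ps}$ is dominated by $(\mathtt{C}')^{ps}$ for a suitable absolute $\mathtt{C}'$ (any $\mathtt{C}'\ge 4\mathtt{C}^{2}$ works); enlarging the absolute constant to handle both inequalities and absorbing the harmless $\sqrt2$'s yields $\mathtt{C}^{-ps}|u|_{s}\le|w|_{s}\le\mathtt{C}^{ps}|u|_{s}$ for all $t\in[0,T]$.

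There is no genuine obstacle here, since every ingredient is already contained in Lemma \ref{lem:inverto}. The only points requiring a little care are the constant bookkeeping — converting the additive ``$1+\mathtt{C}^{2ps}$'' prefactors and the $\sqrt2$ between $U$ and $u$ into a clean factor $\mathtt{C}^{\pm ps}$ — and the observation that \eqref{nave10} controls the $H^{s+2}$-norm of $\mathcal{G}^{>N}(u)[U]$, so that monotonicity of $|\cdot|_{s}$ (Remark \ref{scalaBan}) is needed to return to regularity $s$.
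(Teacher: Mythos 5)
Your argument is correct and follows exactly the paper's (one-line) proof, which derives the two-sided bound from \eqref{nave12} for the upper inequality, \eqref{nave16} for the lower one, and the a priori bound $|u|_{s_0}\le 1$ from Hypothesis \ref{hyp:local}. Your additional bookkeeping (the $\sqrt{2}$ between $|U|_{s}$ and $|u|_{s}$, and absorbing $1+\mathtt{C}^{2ps}$ into $(\mathtt{C}')^{ps}$) is exactly the detail the paper leaves implicit.
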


\begin{proof}
The thesis follows using \eqref{nave12}, \eqref{nave16} and \eqref{hyp:local}.
\end{proof}

In the following we shall study the evolution of
the \emph{modified energy} $|W|_{s}$ instead of $|U|_{s}$. 
In order to do this
we need the following preliminary result.

\begin{proposition}{\bf (Block-diagonalization).}\label{prop:block}
Let $s_0>d/2$ and
assume Hypothesis \ref{hyp:local} with $s_0\rightsquigarrow s_0+2$. 
Then the function $W$ in \eqref{newvariable} satisfies 
\begin{equation}\label{newNLS}
\pa_{t}W=-\ii E\Delta W+\ii E\mathcal{A}(u)W+\mathcal{S}(u)W\,,
\end{equation}
where $\mathcal{A}(u)$ is given 
in \eqref{simboliabbis}
and the remainder $\mathcal{S}(u)$ has the form
\begin{equation}\label{impero}
\mathcal{S}(u):=\left(\begin{matrix}
S_{1}(u) & S_{2}(u) 
\vspace{0.2em}\\\ov{S_2}(u) & \ov{S_1}(u)
\end{matrix}\right)
\end{equation}
for some operators $S_{j}(u)$, $j=1,2$, satisfying
the following. For any $s'\geq s_0$ one has
\begin{equation}\label{restofinale}
|S_{j}(u)h|_{s'+1}\lessdot_{s}|u|_{s_0+2}^{2p}|h|_{s'}\,,
\qquad \forall\, h\in H^{s'}(\T^{d};\C)\,.
\end{equation}
\end{proposition}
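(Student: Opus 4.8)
The plan is to conjugate the system \eqref{sistemaNLS} by the map $\Phi(u)$ of \eqref{propostamappa1}; the function $c=\tfrac{1}{2}b$ is chosen precisely so that the off-diagonal term $\ii E\mathcal{B}(u)$ is cancelled, modulo operators that gain one derivative. Fix $N$ large enough that $\Phi(u)$ is invertible (Lemma \ref{lem:inverto}-$(iii)$) and set $U=\Phi^{-1}(u)W$; differentiating \eqref{sistemaNLS} gives
\[
\pa_{t}W=\big(\pa_{t}\Phi(u)\big)\Phi^{-1}(u)W+\Phi(u)\big(-\ii E\Delta+\ii E\mathcal{A}(u)+\ii E\mathcal{B}(u)+\mathcal{R}(u)\big)\Phi^{-1}(u)W\,.
\]
By Lemma \ref{lem:inverto}-$(iv)$ the term $\big(\pa_{t}\Phi(u)\big)\Phi^{-1}(u)$ gains two derivatives — this is exactly why we assume Hypothesis \ref{hyp:local} with $s_{0}\rightsquigarrow s_{0}+2$ — so it is an admissible contribution to $\mathcal{S}(u)$. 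For the conjugated operator I would write $\Phi(u)=\uno+\mathcal{G}^{>N}(u)$ and, by Lemma \ref{lem:inverto}-$(iii)$, $\Phi^{-1}(u)=\uno-\mathcal{G}^{>N}(u)+\mathcal{G}'(u)$ with $\mathcal{G}'(u)$ gaining at least three derivatives, and expand the four pieces. Since $E\Delta$ has order $2$ while $E\mathcal{A}(u),E\mathcal{B}(u)$ have order $0$ and $\mathcal{G}^{>N}(u),\mathcal{G}'(u)$ gain two derivatives, every term of the expansion gains at least one derivative except the ones we wish to keep, $-\ii E\Delta$ and $\ii E\mathcal{A}(u)$, and the single borderline term $\ii\,[E\Delta,\mathcal{G}^{>N}(u)]$ produced by the Laplacian, to be combined with the whole of $\ii E\mathcal{B}(u)$.

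The heart of the matter is to show that $\ii[E\Delta,\mathcal{G}^{>N}(u)]+\ii E\mathcal{B}(u)$ gains one derivative. Using $\langle D\rangle^{2}=\uno-\Delta$ and that $\Delta,\langle D\rangle^{-2},\Pi_{N}^{\perp}$ are mutually commuting Fourier multipliers, the $(1,2)$-entry of $[E\Delta,\mathcal{G}^{>N}(u)]$ equals
\[
\Delta\, T^{>N}_{c}\langle D\rangle^{-2}+T^{>N}_{c}\langle D\rangle^{-2}\Delta=-2\,T^{>N}_{c}+2\,T^{>N}_{c}\langle D\rangle^{-2}+[\Delta,T_{c}]\,\Pi_{N}^{\perp}\langle D\rangle^{-2}\,,
\]
and its $(2,1)$-entry is obtained from this by replacing $c$ with $\bar c$ and changing the overall sign. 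Now $2\,T^{>N}_{c}\langle D\rangle^{-2}$ gains two derivatives, and $[\Delta,T_{c}]$ has order one — a standard para-differential commutator bound in the spirit of \eqref{cespuglio4} (with $p=2$, comparing $\langle D\rangle^{2}$ to $\jjap{D}^{2}$ up to a bounded Fourier multiplier), using \eqref{nave1} to control $|c|_{s_{0}+1}$ — so $[\Delta,T_{c}]\Pi_{N}^{\perp}\langle D\rangle^{-2}$ gains one derivative. Since $c=\tfrac{1}{2}b$ we have $-2\,T^{>N}_{c}=-T^{>N}_{b}$, whence the $(1,2)$-entry of $\ii[E\Delta,\mathcal{G}^{>N}(u)]+\ii E\mathcal{B}(u)$ is $\ii\big(T_{b}-T^{>N}_{b}\big)$ plus an operator gaining one derivative, that is $\ii\,T^{\leq N}_{b}$ plus such a remainder; but $T^{\leq N}_{b}$ is regularizing by Lemma \ref{danguard}. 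Treating the $(2,1)$-entry identically, we arrive at $\pa_{t}W=-\ii E\Delta W+\ii E\mathcal{A}(u)W+\mathcal{S}(u)W$ with $\mathcal{S}(u)$ a finite sum of operators each gaining at least one derivative.

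It remains to identify the structure \eqref{impero} and establish \eqref{restofinale}. Every operator used in the construction — $-\ii E\Delta$, $\ii E\mathcal{A}(u)$ (here one uses that $a=(p+1)|u|^{2p}$ is real-valued), $\ii E\mathcal{B}(u)$, $\mathcal{R}(u)$, $\mathcal{G}^{>N}(u)$, and therefore $\Phi(u),\Phi^{-1}(u),\pa_{t}\Phi(u)$ — is real-to-real; consequently so is $\mathcal{S}(u)$, being the difference of the operator governing $\pa_{t}W$ and the explicit part $-\ii E\Delta+\ii E\mathcal{A}(u)$. A direct check shows that any real-to-real operator on $H^{s'}(\T^{d};\C^{2})$ has the block form \eqref{impero}, which gives the claimed shape. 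Finally each summand of $\mathcal{S}(u)$ is estimated on $H^{s'}$ by $\lessdot_{s'}|u|_{s_{0}+2}^{2p}|h|_{s'}$: \eqref{nave20} for $\big(\pa_{t}\Phi\big)\Phi^{-1}$; the commutator and composition Lemmata \ref{lem:commutator}--\ref{lem:composition} together with \eqref{nave1} for the pieces built from $\mathcal{G}^{>N}(u)$ and $\Delta$; \eqref{calma20} for the $T^{\leq N}_{b}$ term; and the $p$-admissibility of $\mathcal{R}(u)$ from Proposition \ref{prop:NLSparalinearizzata} for $\Phi\,\mathcal{R}\,\Phi^{-1}$, using $|u|_{s_{0}}\leq|u|_{s_{0}+2}$ (Remark \ref{scalaBan}); this yields \eqref{restofinale}. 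The main obstacle is precisely the borderline commutator term: one must verify that the choice $c=\tfrac{1}{2}b$ makes the principal parts cancel and that everything surviving is genuinely one order smoother, while carefully keeping track of the frequency truncations $\Pi_{N},\Pi_{N}^{\perp}$ throughout.
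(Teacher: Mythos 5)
Your proposal is correct and follows essentially the same route as the paper: conjugation by $\Phi(u)$, isolation of the borderline commutator $[\mathcal{G}^{>N}(u),-\ii E\Delta]$, and cancellation of $\ii E\mathcal{B}(u)$ up to $T_b^{\leq N}$ and one-smoothing remainders thanks to the choice $c=\tfrac{1}{2}b$. The only cosmetic difference is that you organize the key cancellation through the identity $\Delta\langle D\rangle^{-2}=\langle D\rangle^{-2}-\uno$ together with a commutator bound for $[\Delta,T_c]$, whereas the paper writes out the Fourier kernel of $T^{>N}_{b}+G(u)$ and bounds the resulting symbol $r(j,\eta)$ directly; the two computations coincide term by term.
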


\begin{proof}
Fix $N:=\widetilde{\mathtt{C}}^{2ps}$ for some 
absolute constant
$\widetilde{\mathtt{C}}>0$ larger than $\mathtt{C}$ appearing in item 
$(iii)$ of Lemma \ref{lem:inverto}.
Recalling \eqref{sistemaNLS} and differentiating \eqref{newvariable}
we obtain
\begin{align*}
\pa_{t}W&=\Phi(u)[\pa_{t}U]+(\pa_{t}\Phi(u))[U]\nonumber
\\&
=-\ii \Phi(u)E\Delta\Phi^{-1}(u)W
+\ii \Phi(u)E\mathcal{A}\Phi^{-1}(u)W
+\ii \Phi(u)E\mathcal{B}(u)\Phi^{-1}(u)W
\\&\quad+\ii \Phi(u)\mathcal{R}(u)\Phi^{-1}(u)W
+(\pa_{t}\mathcal{G}^{>N})(u)\Phi^{-1}(u)W\,.
\end{align*}
By item $(iii)$ in Lemma \ref{lem:inverto}
we have
\begin{align}
\pa_{t}W&=-\ii \Phi(u)E\Delta \Gamma(u)W
+\ii \Phi(u)E\mathcal{A}(u)\Gamma(u)W
+ \ii \Phi(u)E\mathcal{B}(u)\Gamma(u)W
+\widetilde{\mathcal{S}}(u)W
\label{impero2}
\end{align}
where 
\begin{equation*}
\begin{aligned}
\widetilde{\mathcal{S}}(u)&:=
-\ii \Phi(u)E\Delta\widetilde{Q}^{>N}(u)\Gamma(u)
+ \ii \Phi(u)E(\mathcal{A}(u)+\mathcal{B}(u))
\widetilde{Q}^{>N}(u)\Gamma(u)
\\&\qquad
+\ii \Phi(u)\mathcal{R}(u)\Phi^{-1}(u)W
+(\pa_{t}\mathcal{G})(u)\Phi^{-1}(u)W\,.
\end{aligned}
\end{equation*}
By an explicit computation one has that $\widetilde{\mathcal{S}}$
is real-to-real, hence has the form 
\eqref{impero}.
Moreover, by Lemma \ref{lem:inverto},
Proposition \ref{prop:NLSparalinearizzata},
Lemma \ref{azioneSimboo} and recalling \eqref{hypo1}
one can check that
the remainder 
$\widetilde{\mathcal{S}}(u)$ is a one-smoothing 
operator
satisfying the bound \eqref{restofinale}.

We now study each summand appearing the 
right hand side of \eqref{impero2}.
Using \eqref{propostamappa1}, estimate \eqref{nave10}
Lemma \ref{azioneSimboo} and \eqref{confort}
it is easy to check that
\[
\ii \Phi(u)E\mathcal{A}(u)\Gamma(u)W
+ \ii \Phi(u)E\mathcal{B}(u)\Gamma(u)W
=\ii E\mathcal{A}(u)+\ii E\mathcal{B}(u)+\mathcal{S}_1(u)
\]
for some remainder $\mathcal{S}_1(u)$ of the form 
\eqref{impero} and satisfying \eqref{restofinale}.
By an explicit computation we also have
\begin{equation}\label{impero3}
-\ii \Phi(u)E\Delta \Gamma(u)=-\ii E\Delta
+\Big[\mathcal{G}^{>N}(u),-\ii E\Delta   \Big]
+\ii \mathcal{G}^{>N}(u)E\Delta \mathcal{G}^{>N}(u)\,.
\end{equation}
We note that, for any $s'\geq s_0$,
\[
\begin{aligned}
|\mathcal{G}^{>N}(u)E\Delta \mathcal{G}^{>N}(u)V|_{s'+2}
&\stackrel{\eqref{nave10}}{\lessdot_{s'}}
|u|_{s_0}^{2p}|\Delta \mathcal{G}^{>N}(u)V|_{s'}
\lessdot_{s'}
|u|_{s_0}^{2p}
| \mathcal{G}^{>N}(u)V|_{s'+2}
\stackrel{\eqref{nave10}, \eqref{hypo1}}{\lessdot_{s'}}
|u|_{s_0}^{2p}|V|_{s'}\,,
\end{aligned}
\]
for any $V\in H^{s'}(\T^{d};\C^2)$. This implies that 
\eqref{impero3}, up to remainders satisfying \eqref{restofinale},
becomes
\[
-\ii \Phi(u)E\Delta \Gamma(u)=-\ii E\Delta
+\ii E\sm{0}{G(u)}{\ov{G}(u)}{0}\,,
\]
where
\begin{equation}\label{impero5}
G(u):= T_{c}^{>N}\langle D\rangle^{-2}\Delta
+ \Delta T_{c}^{>N}\langle D\rangle^{-2} \,.
\end{equation}
To summarize we have obtained
\begin{align*}
\pa_{t}W&=-\ii E\Delta W +\ii \mathcal{A}(u)W + \mathfrak{R}(u)
+\ii E\sm{0}{T_{b}\,+G(u)}{T_{\ov{b}}\,+\ov{G}(u)}{0}\,,
\end{align*}
for some remainder $\mathfrak{R}(u)$ of the form
\eqref{impero} satisfying 
\eqref{restofinale}.
The thesis follows by proving that 
the term $T_{b}+G(u) $
is actually a smoothing remainder satisfying \eqref{restofinale}.
First of all we write
\begin{equation}\label{cancello}
T_{b}+G(u)=T_{b}^{\leq N}+T_{b}^{>N}+G(u)\,,
\end{equation}
where $N\sim \mathtt{C}^{2ps'}$ has been fixed in item $(iii)$
of Lemma \ref{lem:inverto}.
Now, by Lemma \ref{danguard}, we have, for any $s'\geq s_0$,
\begin{equation*}
|T_{b}^{\leq N}h|_{s'+1}\stackrel{\eqref{calma20}}{\lessdot_{s'}}N |u|^{2p}_{s_0} |h|_{s'}
\lessdot_{s'}|u|^{2p}_{s_0} |h|_{s'}\,,\qquad
\forall\, h\in H^{s'}(\T^{d};\C)\,.
\end{equation*}
Moreover, 
given any $h\in H^{s'}(\T^{d};\C)$
we have, for any $j\in\Z^{d}$,
\[
\begin{aligned}
\widehat{(T^{>N}_bh)}(j)+\widehat{(G(u)h)}(j)&
\stackrel{\eqref{impero5}}{=}\sum_{\eta\in\Z^d,|\eta|>N}
\chi_{\epsilon}\left(
\frac{|j-\eta|}{\langle\eta\rangle}
\right)r(j,\eta)\widehat{h}(\eta)\,,
\end{aligned}
\]
where
\[
\begin{aligned}
r(j,\eta)&:=
\widehat{b}(j-\eta) -\widehat{c}(j-\eta)\langle \eta\rangle^{-2}
\big(
|\eta|^2+|j|^2
\big)
\\&\stackrel{\eqref{generator}}{=}
\widehat{b}(j-\eta)\big[ 
1-\frac{1}{2\langle\eta\rangle^2}(|\eta|^{2}+|j|^{2})
\big]
=\widehat{b}(j-\eta)\Big[
1-\frac{|\eta|^2}{1+|\eta|^{2}}-\frac{|j|^2-|\eta|^{2}}{2(1+|\eta|^{2})}
\Big]\,.
\end{aligned}
\]
Therefore one gets
\[
| \widehat{(T^{>N}_bh)}(j)+\widehat{(G(u)h)}(j)|
\lesssim 
\sum_{\eta\in\Z^d}
|\widehat{b}(j-\eta)|\frac{|j-\eta|}{\langle\eta\rangle}
\chi_{\epsilon}\left(
\frac{|j-\eta|}{\langle\eta\rangle}
\right)\,.
\]
Reasoning as in the proof of item 
$(i)$ of Lemma \ref{azioneSimboo} and using \eqref{confort} 
we deduce
\begin{equation*}
|(T^{>N}_b+G(u))h|_{s'+1}\lessdot_{s}|u|_{s_0+1}^{2p}|h|_{s'}\,,\qquad \forall\,
h\in H^{s'}(\T^{d};\C)\,.
\end{equation*}
By the discussion above
we deduce that
the term in \eqref{cancello} can be absorbed in the 
remainder satisfying the bound \eqref{restofinale}.
This concludes the proof.
\end{proof}

\subsubsection{Conclusions}
We are now in position to conclude the proof of the main result of this section.
\begin{proof}[{\bf Proof of Theorem \ref{thm:energy}}]
Under Hypothesis \ref{hyp:local} 
the system \eqref{sistemaNLS} is equivalent to \eqref{QLNLS}.
Moreover applying Lemma \ref{lem:inverto}
and Proposition \ref{prop:block}
we have that if $u$ solves \eqref{sistemaNLS} then
$w$ solves \eqref{scalarNLSdiag}
where the operators $\mathcal{S}_j(u)$, $j=1,2$, satisfy
the bound \eqref{restofinale}.
Recalling \eqref{simboliabbis} and \eqref{newvariable}, we have that 
\eqref{newNLS} is equivalent 
\begin{equation}\label{scalarNLSdiag}
\pa_{t}w=-\ii \Delta w+\ii T_{a}w+\mathcal{S}_1(u)[w]+\mathcal{S}_2(u)[\bar{w}]\,.
\end{equation}

We study the evolution of the Sobolev norm $|w|_{s}$
when $s\geq s_0+2$ with 
$s_0>d/2$.
We have
\begin{align}
\pa_{t}|w|_{s}^{2}&\stackrel{\eqref{normaJapJap}}{=}
\pa_{t}(\jjap{D}^sw,\jjap{D}^{s}w)_{L^2}
=2{\rm Re}(\jjap{D}^{s}\pa_{t}w,\jjap{D}^{s}w)_{L^2}
\nonumber 
\\&
\stackrel{\eqref{scalarNLSdiag}}{=}
2{\rm Re}(\jjap{D}^{s}\ii T_{a}w,\jjap{D}^{s}w)_{L^2}
\label{impero10}
\\&
\qquad+2{\rm Re}(\jjap{D}^{s}(\mathcal{S}_1(u)[w]
+\mathcal{S}_{2}(u)[\bar{w}]), \jjap{D}^{s}w)_{L^2}
\label{impero11}
\end{align}
First of all, by Cauchy-Schwarz inequality and 
since $s\geq s_0+2$ (implying  $s-1\geq s_0$), we have
\begin{equation}\label{impero13}
|\eqref{impero11}|\lesssim
|w|_{s}\sup_{j=1,2}|\mathcal{S}_j(u)[w]|_{s}
\stackrel{\eqref{restofinale}}{\lessdot_s}
|u|^{2p}_{s_0+2}|w|_{s-1}|w|_{s}\,.
\end{equation}
Moreover
\begin{equation}\label{impero20}
\begin{aligned}
|\eqref{impero10}|
&=|( \ii\big[\jjap{D}^{2s},T_{a}\big]w , w)_{L^2}|
= |( \ii\jjap{D}^{-s+1}\big[\jjap{D}^{2s},T_{a}\big]w , 
\jjap{D}^{s-1}w)_{L^2}|
\\&\leq
|w|_{s-1}| \big[\jjap{D}^{2s},T_{a}\big]w |_{-s+1}
\stackrel{\eqref{cespuglio4}}{\lessdot_{s}}
|w|_{s-1}|a|_{s_0+1}|w|_{s}
\lessdot_{s}
|u|_{s_0+1}^{2p}|w|_{s-1}|w|_{s}\,,
\end{aligned}
\end{equation}
where we used 
\eqref{cespuglio4}
with $p\rightsquigarrow 2s$, \eqref{simboliab} and Remark \ref{sapone}. 
Collecting together \eqref{impero13} and \eqref{impero20}
and integrating in time we get
\[
|w(t)|_{s}^{2}\leq |w(0)|_{s}^{2}+\mathtt{M}^{2ps}\int_{0}^{t}
|u(\tau)|_{s_0+2}^{2p}|w(\tau)|_{s-1}|w(\tau)|_{s}d\tau
\]
for any $t\in [0,T]$ and some absolute constant $\mathtt{M}>0$.
The latter bound,
together with the equivalence
in Corollary
\ref{equinono}, implies 
\begin{equation}\label{energyaprioriBIS}
|u(t)|_{s}^{2}\leq \mathtt{C}_1^{2ps} \Big(|u(0)|_{s}^{2}+\mathtt{M}^{2ps}\int_{0}^{t}
|u(\tau)|_{s_0+2}^{2p}|u(\tau)|_{s-1}|u(\tau)|_{s}d\tau\Big)\,,
\end{equation}
for some absolute constant $\mathtt{C}_1>0$.
Notice  that (by assumption)
$s_0+2\leq s_1\leq s-1$ 
and hence
\[
s-1=(1-\lambda)s_1+\lambda s\,\quad \Leftrightarrow\quad  \lambda=1-\frac{1}{s-s_1}\,.
\]
Then, by Lemma \ref{interopolo}
we get
\[
|u|_{s-1}|u|_{s}\leq |u|_{s_1}^{1-\lambda}|u|_{s}^{1+\lambda}
\leq |u|_{s_1}^{\frac{1}{s-s_1}}|u|_{s}^{2\alpha}\,,
\quad \alpha:=1-\frac{1}{2(s-s_1)}\,,
\] 
for any $ t\in[0,T]$. 
Then by \eqref{energyaprioriBIS}, \eqref{scaling property} (with $R\rightsquigarrow 2\mathtt{M}$, 
$s_0\rightsquigarrow s_0+2$, $s\rightsquigarrow s_1$)
we get
\[
|u(t)|_{s}^{2}\leq \mathtt{C}_1^{2p s}\Big(|u_0|_{s}^{2}
+\frac{\mathtt{M}^{2p (s-s_1+s_0)} (2\mathtt{M})^{4p}}{2^{2p(s_1-s_0)}}\int_{0}^{t}
|u(\tau)|^{2p+2(1-\alpha)}_{s_1}
|u(\tau)|_{s}^{2\alpha}d\tau\Big)
\]
The latter bound 
implies \eqref{energyapriori} choosing $\mathtt{C}>0$ large with $\mathtt{M}$.

In order to prove \eqref{buttalapasta} we reason as follows.
By estimate \eqref{energyapriori}
and Lemma \ref{drago}-$(ii)$ one deduce
\begin{equation*}
(|u(t)|_{s}^{2})^{1-\alpha}\leq 
(\mathtt{C}_1^{2ps}|u(0)|_{s}^{2})^{1-\alpha}
+(1-\alpha)
\frac{\mathtt{C}_1^{2ps}\mathtt{M}^{2p (s-s_1+s_0)}}{2^{2p(s_1-s_0)}}
\int_{0}^{t}
|u(\s)|_{s_1}^{2p+\frac{1}{s-s_1}}
d\s\,,
\qquad \forall \, t\in[0,T]\,.
\end{equation*}

By a simple computation (see footnote \eqref{ciaoneproprio})
, recalling that $1-\alpha=\tfrac{1}{2(s-s_1)}$, 
we deduce
\[
|u(t)|_{s}^{2}\leq \mathtt{C}^{2ps(s-s_1)}
\Big[
|u(0)|_{s}^{2}
+
\left(
\frac{\mathtt{M}^{2p (s-s_1+s_0)}}{2^{2p(s_1-s_0)}}
\int_{0}^{t}
|u(\s)|_{s_1}^{2p+\frac{1}{s-s_1}}
d\s\right)^{2(s-s_1)}
\Big]\,,
\]
which implies 
\eqref{buttalapasta} for some suitable $\mathtt{C }>0$ large enough.
\end{proof}

\section{Conclusions}\label{sec:basicestimate}
In this section we give the proof of the main results Theorems \ref{thm:main1}-\ref{thm:main3}.

\subsection{Proof of Theorem \ref{thm:main1}}\label{proof:thm1}
In this section we prove the our main  Theorem \ref{thm:main1}.
First of all, consider the constant $\mathtt{M}>0$ 
provided by Lemma \ref{lem:confort2}.
Let us fix $R:=2\mathtt{M}$ and consider (see \eqref{piccolezza dati})
any initial condition $u_0\in H^{s_1}(\T^{d};\C)$ 
satisfying
\begin{equation}\label{piccolezza dati2}
\|u\|_{H^{s_1}}:=\|u\|_{L^{2}}+\|\pa_{x}^{s_1}u\|_{L^{2}}\leq \e\,,
\qquad 
R^{s_1}\|u\|_{L^{2}}\leq \e\,,
\end{equation}
and let
us define 
\begin{equation}\label{deltadeserto}
\delta:=\|u_0\|_{H^{s_1}}+(2\mathtt{M})^{s_1}\|u_0\|_{L^{2}}\,.
\end{equation}
Recalling \eqref{normaJapJap}-\eqref{def:japjapModificato}
we have that condition \eqref{piccolezza dati2} and \eqref{equiIncredibile1} imply
$|u_0|_{s_1}\leq \delta \leq 2\e$\,.

\smallskip
\noindent
{\bf Proof of \eqref{storm}-\eqref{stimaccia}.}
By classical local existence theory we have that there exists 
a unique solution in
\[
C^{0}([0,T_{loc}];H^{s_1}(\T^{d};\C))\cap C^{1}([0,T_{loc}];H^{s_1-2}(\T^{d};\C))\,,
\]
for some $T_{loc}>0$ possibly small, i.e. Hypothesis 
\ref{hyp:local} is verified.
We now show that actually the solution exists 
over a time interval $[0,T]$ with $T\geq T_{loc}$ 
satisfying the bound
in \eqref{storm}.

Let $B^{s_{1}}(2\delta)$ be the closed ball of radius $2\delta$, centered at the origin in $H^{s_{1}}(\T^{d};\C)$. 
And, considering the solution $u(t)$ with initial condition $u^{0}\in B^{{s_{1}}}({\delta})$ 
introduced just above, let us define the time of escape from the ball as 
\begin{equation*}
\tau_{e} : = \inf\set{t\in\R\, : u(t)\not\subset B^{s_{1}}(2\delta)}\,.
\end{equation*}
Notice that a priori one has 
$\sup_{t\in[0,\tau_e)} |u(t)|_{s_1}\leq 2\delta$.
If the set is empty then we have ``perpetual'' stability and the solution stays in the ball 
of radius $2\delta$ as long as the solution exists. 
If the set is not empty then
 we shall show that 
  \begin{equation}\label{ginogino2}
\tau_{e} \geq  \frac{1}{8^{2p+2}}\mathtt{M}^{-2ps_0} \e^{-2p}2^{2p(s_1-s_0)}:=T_{good}\,,
\end{equation} 
where we have assumed, without loss of generality, that $\tau_e>0$.
We prove the claim by contradiction assuming $\tau_{e}<T_{good}$.

Theorem \ref{thm:energyBasic} applies with $T\rightsquigarrow \tau_e$, 
hence 
by estimate \eqref{basicStima} and recalling $\delta\leq 2\e$
we deduce
\[
\begin{aligned}
(2\delta)^2\leq |u(\tau_e)|_{s_1}^{2}&\leq 
|u_{0}|_{s_1}^{2}
+\mathtt{M}^{2p s_0} 2^{-2p(s_1-s_0)}\int_{0}^{\tau_{e}}
|u(\s)|_{s_1}^{2p+2}d\s\,,
\\&\leq 
\delta^2+
\tau_e\mathtt{M}^{2p s_0} 2^{-2p(s_1-s_0)}4^{2p+2}\delta^{2p+2}
\\&
\leq \delta^{2}\big(1+ \frac{\delta^{2p}}{\e^{2p}}\frac{4^{2p+2}}{8^{2p+2}} \big)
\leq \delta^{2}(1+\tfrac{1}{16})
<(2\delta)^2\,,
\end{aligned}
\]
which is a contradiction. So one must have $\tau_{e}\geq T_{good}$.
Therefore
the solution $u(t)$ can be extended over a time interval $[0,T]$ with $T_{good}\leq T<\tau_e$ 
 consistently with \eqref{storm},
and that 
\begin{equation}\label{stimateo11}
\sup_{t\in[0,T]}|u(t)|_{s_1}\leq 2\delta\,.
\end{equation}
The latter bound, together with \eqref{equiIncredibile1}, implies 
\[
\|u(t)\|_{H^{s_1}}+(2\mathtt{M})^{s_1}\|u(t)\|_{L^{2}}\leq 6\delta\,,
\]
uniformly in $t\in[0,T]$, from which we deduce \eqref{stimaccia}.

\smallskip
\noindent
{\bf Proof of \eqref{stimaccia2bis}-\eqref{stimaccia2}.}
Reasoning as above we now assume that the initial condition $u_0$
satisfies
\[
|u_0|_{s_1}\leq 
\delta:=\|u_0\|_{H^{s_1}}+(2\mathtt{M})^{s_1}\|u_0\|_{L^{2}}\leq 
2\e\,,\qquad  |u_0|_{s}<+\infty\,.
\]
By classical local existence theory we have that there exists 
a unique solution 
in the space 
\[
C^{0}([0,T_{loc}];H^{s}(\T^{d};\C))\cap C^{1}([0,T_{loc}];H^{s-2}(\T^{d};\C))\,,
\]
for some $T_{loc}>0$ possibly small. 
Moreover, in view of  the smallness condition on $|u_0|_{s_1}$, 
estimate \eqref{stimateo11}
guarantees that
the \emph{low} norm $|\cdot|_{s_1}$
stay small for very long time (recall \eqref{ginogino2}):
\begin{equation}\label{staysmall}
\sup_{t\in[0,T]}|u(t)|_{s_1}\leq 2\delta\,,
\qquad 
T\geq T_{good}\,.
\end{equation}
We now study the evolution of the high norm
$|u(t)|_{s}$ along the flow of \eqref{QLNLS}.
By the a priori estimate \eqref{basicStima3} of Theorem \ref{thm:energyHighhigh},
together with Remark \ref{scalaBan} and \eqref{staysmall},
we also have the following  control of the \emph{high} norm: 
\begin{equation}\label{redifrancia}
|u(t)|^{2}_{s}\leq
|u_0|^{2}_{s}\exp\big\{ \frac{\mathtt{M}^{2p (s-s_1+s_0)}}{2^{2p(s_1-s_0)}} 4^{2p+1}\e^{2p}{t}\big\}\,,
\qquad \forall\, t\in[0,T]\,.
\end{equation}
By classical prolongation argument one can extend the solution on the time interval
$[0,T]$ obtaining the thesis.
The  estimate \eqref{stimaccia2bis} follows 
by \eqref{redifrancia} and \eqref{equiIncredibile1}.
The bound \eqref{stimaccia2} 
follows immediately using \eqref{staysmall}.

\subsection{Proof of Theorem \ref{thm:main3}}
We are in position to prove our main result.
Let $s\geq s_1+1$ with $s_1\geq s_0+2$ where  $s_0>d/2$.
Under the assumptions of Theorem \ref{thm:main3}
we have that Theorem \ref{thm:main1} guarantees that
(recall also the equivalence of the norms \eqref{equiIncredibile1})
\begin{equation}\label{disastro}
\sup_{t\in[0,T]}|u(t)|_{s_0+2}\leq \sup_{t\in[0,T]}|u(t)|_{s_1}\leq 2\delta\leq 4\e\,,
\end{equation}
where  $\delta $ is in \eqref{deltadeserto} and for $T\geq T_{good}$ in \eqref{ginogino2}.
In particular for $4\e\leq 1$ one has that 
Hypothesis \ref{hyp:local} holds, since the 
\emph{low} norm $|u(t)|_{s_0+2}$ is controlled $[0,T]$.
We shall use a boot strap argument to show that actually one can 
extends the solution in the space $H^{s}(\T^{d};\C)$ 
over the time interval $[0,T]$.
For any $0\leq t\leq \widehat{T}\leq T$,
by applying Theorem \ref{thm:energy} 
(see \eqref{buttalapasta})
 we have the a priori
bound on the $H^{s}$-norm
 \begin{equation*}
 \begin{aligned}
|u(t)|_{s}^{2}
 &\leq 
 \mathtt{C}^{2ps(s-s_1)}\Big[|u(0)|_{s}^{2}+  
 \Big(
 \frac{t\mathtt{M}^{2p (s-s_1+s_0)}}{2^{2p(s_1-s_0)}}
 \sup_{t\in[0,T]}|u(t)|_{s_1}^{2p}
 \Big)^{2(s-s_1)}
  \big(
 \sup_{t\in[0,T]}|u(t)|_{s_1}
 \big)^{2} \Big]
 \\&
 \stackrel{\eqref{disastro}}{\leq }
  \mathtt{C}^{2ps(s-s_1)}\Big[|u(0)|_{s}^{2}+  
 \Big(
 \frac{t\e^{2p}\mathtt{M}^{2p (s-s_1+s_0)}}{2^{2p(s_1-s_0)}}
4^{2p}
 \Big)^{2(s-s_1)}
 4\delta^2
 \Big]\,.
 \end{aligned}
 \end{equation*}
Using \eqref{equiIncredibile1} and taking the 
 square root\footnote{We used that $\sqrt{a^2+b^2}\leq a+b$ for $a,b\geq 0$.}
we get
 \[
 \begin{aligned}
\|u(t)\|_{H^s}
  &\leq 
  3 \mathtt{C}^{ps(s-s_1)}\Big(\|u(0)\|_{H^s}+ (2\mathtt{M})^{s}\|u_0\|_{L^{2}}\Big)
  \\
  &+  2 
  \mathtt{C}^{ps(s-s_1)}\Big(\|u(0)\|_{H^{s_1}}+ (2\mathtt{M})^{s_1}\|u_0\|_{L^{2}}\Big)
 \Big(
 \frac{t\e^{2p}\mathtt{M}^{2p (s-s_1+s_0)}}{2^{2p(s_1-s_0)}}
4^{2p}
 \Big)^{s-s_1}\,,
 \end{aligned}
 \]
 for $t\in[0,T]$.
 The latter bound implies \eqref{stimaIncredibleBis} and
 the thesis follows by a standard bootstrap argument. 
The bound \eqref{stimaIncredible} follows by \eqref{stimaIncredibleBis},
where $T_{good}$ is defined in 
 \eqref{patata1}.

\vspace{0.6em}

\gr{Declarations}. Data sharing not applicable to this article as no datasets 
were generated or analyzed during the current study.

\noindent
Conflicts of interest: The authors have no conflicts of interest to declare.

\vspace{0.6em}
\textbf{Acknowledgments.} 
The authors thanks Emanuele Haus and Michela Procesi for 
fruitful comments and discussions.
The authors have been  supported by the  research project 
PRIN 2020XBFL ``Hamiltonian and dispersive PDEs" of the 
Italian Ministry of Education and Research (MIUR). The authors also acknowledge the support of the INdAM-GNAMPA research project 
``Chaotic and unstable behaviors of infinite-dimensional dynamical systems" CUP\_\,E55F22000270001.

\appendix
\section{On some Gr\"onwall type inequalities}
We collect here some classical results about Gr\"owall type inequalities.
%

\begin{lemma}\label{drago}
Consider  parameters $M\geq0$, $\alpha\in(0,1)$, $a,b\in \R$ with $a<b$.
Let 
$f:\R\to\R_{+}:=[0,+\infty)$ and 
$x : [a,b]\to\R_{+}$  be continuous functions.

\noindent
$(i)$ If the function $x(t)$ satisfies 
\begin{equation*}
x(t)\leq  M+  \int_{a}^{t}f(\s)x(\s) d\s\,,\quad \forall \, t\in[a,b]\,,
\end{equation*}
then one has
\[
 x(t)\leq M\exp\big\{\int_{a}^{t}f(\s)d\s\big\}\,, \quad \forall \, t\in[a,b]\,.
\]

\noindent
$(ii)$ If the function $x(t)$ satisfies 
\begin{equation*}
x(t)\leq  M+  \frac{1}{1-\alpha}\int_{a}^{t}f(\s)(x(\s))^{\alpha}d\s\,,\quad \forall \, t\in[a,b]\,,
\end{equation*}
then one has
\begin{equation*}
\big(x(t)\big)^{1-\alpha}\leq M^{1-\alpha}+\int_{a}^{t}f(\s)d\s\,,\quad \forall\, t\in[a,b]\,.
\end{equation*}
\end{lemma}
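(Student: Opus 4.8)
The plan is to treat both parts by the classical device of passing from the integral inequality to a differential inequality for its right-hand side, which is a $C^1$ function since $f$ and $x$ are continuous (so $\sigma\mapsto f(\sigma)x(\sigma)$, resp.\ $\sigma\mapsto f(\sigma)x(\sigma)^\alpha$, is continuous and its primitive is $C^1$).

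For part $(i)$ I would set $y(t):=M+\int_a^t f(\sigma)x(\sigma)\,d\sigma$, so that $y(a)=M$, $x(t)\le y(t)$ on $[a,b]$, and $y'(t)=f(t)x(t)\le f(t)y(t)$ using $f\ge0$ and $x\le y$. Multiplying by the integrating factor $e^{-\int_a^t f(\sigma)d\sigma}$ gives $\frac{d}{dt}\big(y(t)e^{-\int_a^t f(\sigma)d\sigma}\big)\le 0$, hence $y(t)e^{-\int_a^t f(\sigma)d\sigma}\le y(a)=M$; together with $x(t)\le y(t)$ this is exactly the claimed bound (the case $M=0$ needs no special treatment, since then $x\equiv 0$).

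For part $(ii)$ I would first assume $M>0$. Set $y(t):=M+\frac{1}{1-\alpha}\int_a^t f(\sigma)x(\sigma)^\alpha\,d\sigma$, so that $y$ is $C^1$, $y(a)=M$, $y(t)\ge M>0$ on $[a,b]$, and $x(t)\le y(t)$. Since $\alpha\in(0,1)$ the map $r\mapsto r^\alpha$ is nondecreasing on $\R_+$, whence $y'(t)=\frac{1}{1-\alpha}f(t)x(t)^\alpha\le\frac{1}{1-\alpha}f(t)y(t)^\alpha$. As $y>0$ we may divide by $y^\alpha$ and recognize $\frac{d}{dt}\,y(t)^{1-\alpha}=(1-\alpha)y(t)^{-\alpha}y'(t)\le f(t)$; integrating from $a$ to $t$ yields $y(t)^{1-\alpha}\le M^{1-\alpha}+\int_a^t f(\sigma)\,d\sigma$, and then $x(t)^{1-\alpha}\le y(t)^{1-\alpha}$ (valid since $1-\alpha>0$) gives the conclusion.

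The only point requiring a little care is the degenerate case $M=0$ in part $(ii)$, where one cannot divide the differential inequality by $y^\alpha$ directly; I would isolate it and dispatch it by a standard $\varepsilon$-perturbation. Indeed, for any $\varepsilon>0$ the hypothesis also gives $x(t)\le\varepsilon+\frac{1}{1-\alpha}\int_a^t f(\sigma)x(\sigma)^\alpha\,d\sigma$, so the previous step applies with $M$ replaced by $\varepsilon$, yielding $x(t)^{1-\alpha}\le\varepsilon^{1-\alpha}+\int_a^t f(\sigma)\,d\sigma$; letting $\varepsilon\to0^+$ produces the desired bound, since $M^{1-\alpha}=0$. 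Everything else is routine.
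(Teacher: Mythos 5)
Your proof is correct and follows essentially the same route as the paper: part $(i)$ is the standard integrating-factor Gr\"onwall argument (which the paper simply cites as classical), and part $(ii)$ differentiates $(M+y(t))^{1-\alpha}$ with $y$ the integral term, exactly as the paper does. The only difference is that you explicitly handle the degenerate case $M=0$ by an $\varepsilon$-perturbation, a point the paper's computation silently glosses over (its division by $(M+y(t))^\alpha$ is problematic when $M=0$ and $y$ vanishes), so your version is in fact slightly more careful.
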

\begin{proof}
%
The result of item $(i)$ is classical and 
we refer, for instance, to \cite{MPF1991} for its proof. 
For completeness we give the proof of item $(ii)$.
We set
\[
y(t):=\frac{1}{1-\alpha}\int_{a}^{t}f(\s)(x(\s))^{\alpha}d\s\,,\quad t\in[a,b]\,,
\quad \Rightarrow \quad y(a)=0\,.
\]
We note  that $x(t)\leq (M+y(t))$ and that
\[
\pa_{t}y(t)= \frac{1}{1-\alpha}f(t)(x(t))^{\alpha}\,.
\]
We then deduce
\[
\frac{d}{dt}(M+y(t))^{1-\alpha}=(1-\alpha)\frac{\dot{y}(t)}{(M+y(t))^{\alpha}}
=
\frac{f(t)(x(t))^{\alpha}}{(M+y(t))^{\alpha}}\leq f(t)\,.
\]
Integrating from $a$ to $t$ we get
\[
(M+y(\s))^{1-\alpha}\Big|_{a}^{t}
\leq \int_{a}^{t}f(\s)d\s\,\qquad \Rightarrow\qquad
(M+y(t))^{1-\alpha}\leq M^{1-\alpha}+ \int_{a}^{t}f(\s)d\s\,.
\]
Therefore the thesis follows. 
\end{proof}


\end{document}